%%latex2e%
\documentclass[12pt,leqno]{article}
\usepackage{amsmath,amssymb,amscd,latexsym,bm}

%%% mark changes:

%\usepackage{xcolor}

%\usepackage{changebar}
%\newcommand{\removed}[1]{\cbstart\removedfragile{#1}\cbend{}}

%\newcommand{\removedfragile}[1]{{\color{red}{#1}}{}}

%\newcommand{\added}[1]{\cbstart\addedfragile{#1}\cbend{}}

%\newcommand{\addedfragile}[1]{{\color{green!50!black}{#1}}{}}

%\newcommand{\changed}[2]{\added{#1}\removed{#2}}
\newcommand{\dis}{\displaystyle}
%%%
\usepackage[all]{xy}
\newcommand{\A}{{\mathbb{A}}}

\newcommand{\C}{{\mathbb{C}}}

\newcommand{\F}{{\mathbb{F}}}

\newcommand{\N}{\mathbb{N}}

\newcommand{\Q}{{\mathbb{Q}}}
\newcommand{\oQ}{\overline{\Q}}

\newcommand{\Z}{{\mathbb{Z}}}
\newcommand{\oZ}{\overline{\Z}}
\newcommand{\uZ}{\underline{\Z}}

\newcommand{\tf}{\tilde{f}}
\newcommand{\tK}{\tilde{K}}
\newcommand{\tQ}{\tilde{Q}}

\newcommand{\car}{\mathrm{char}}
\newcommand{\cont}{\mathrm{cont}}
\newcommand{\Corr}{\mathrm{Corr}}
\newcommand{\uCorr}{\underline{\Corr}}
\newcommand{\cycle}{\mathrm{cycle}}
\newcommand{\length}{\mathrm{length}}
\newcommand{\equi}{\mathrm{equi}}

\newcommand{\id}{\mathrm{id}}

\renewcommand{\mod}{\mathrm{mod}\,}
\newcommand{\Mor}{\mathrm{Mor}}
\newcommand{\NAS}{\mathrm{NoethAffSch}}
\newcommand{\AS}{\mathrm{AffSch}}

\newcommand{\op}{\mathrm{op}}
\newcommand{\red}{\mathrm{red}}
\newcommand{\rk}{\mathrm{rk}\,}

\newcommand{\spec}{\mathrm{spec}\,}

\newcommand{\supp}{\mathrm{supp}\,}
\newcommand{\Aut}{\mathrm{Aut}}

\newcommand{\End}{\mathrm{End}}
\newcommand{\Equ}{\mathrm{Equ}}
\newcommand{\Ext}{\mathrm{Ext}}
\newcommand{\uExt}{\underline{\Ext}}

\newcommand{\Gal}{\mathrm{Gal}}

\newcommand{\Hom}{\mathrm{Hom}}

\newcommand{\ind}{\mathrm{ind}}

\newcommand{\Ker}{\mathrm{Ker}\,}

\newcommand{\PropCycl}{\mathrm{PropCycl}}
\newcommand{\PropCaDir}{\mathrm{PropCaDiv}}

\newcommand{\rat}{\mathrm{rat}}

\newcommand{\sep}{\mathrm{sep}}

\newcommand{\Ch}{{\mathcal C}}

\newcommand{\Mh}{{\mathcal M}}
\newcommand{\Nh}{{\mathcal N}}
\newcommand{\Oh}{{\mathcal O}}
\newcommand{\Ph}{\mathcal{P}}
\newcommand{\eg}{{\mathfrak{g}}}

\newcommand{\ep}{\mathfrak{p}}

\newcommand{\onA}{\overline{A}}

\newcommand{\oK}{\overline{K}}

\newcommand{\uR}{\underline{R}}

\newcommand{\ob}{\overline{b}}
\newcommand{\oX}{\overline{X}}
\newcommand{\ox}{\overline{x}}

\newcommand{\tA}{\tilde{A}}

\newcommand{\tP}{\tilde{P}}

\newcommand{\dcup}{\dot{\cup}}
\newcommand{\silo}{\stackrel{\sim}{\rightarrow}}
\newcommand{\ent}{\,\hat{=}\,}
\newcommand{\colim}{\mathrm{colim}}
\newcommand{\Affsch}{\mathrm{AffSch}}
\newcommand{\Indsch}{\mathrm{IndSch}}
\newcommand{\Sch}{\mathrm{Sch}}
\newcommand{\Ind}{\mathrm{Ind}}
\newcommand{\rdp}{\!^{\dis\prime}\;}
\newcommand{\btu}{\bigtriangleup}
\newtheorem{theorem}{Theorem}[section]
\newtheorem{lemma}[theorem]{Lemma}
\newtheorem{prop}[theorem]{Proposition}
\newtheorem{defn}[theorem]{Definition}
\newtheorem{cor}[theorem]{Corollary}
\newtheorem{example}[theorem]{Example}
\newtheorem{examples}[theorem]{Examples}
\newtheorem{remark}[theorem]{Remark}

\newcounter{aufz}
\setcounter{aufz}{1}

\newenvironment{rem}{\noindent {\bf Remark}}{}
\newenvironment{rems}{\noindent {\bf Remarks}}{}

\newenvironment{exmp}{\noindent{\bf Example}}{}

\newenvironment{proofof}{\noindent {\bf Proof of}}{\mbox{}\hspace*{\fill}$\Box$}
\newenvironment{proof}{\noindent {\bf Proof}}{\mbox{}\hspace*{\fill}$\Box$}

%%%%%%% Definitionen Christopher %%%%%%%%%%%
\newcommand{\pr}{\mathrm{pr}}

\newcommand{\tX}{\tilde{X}}

\newcommand{\Quot}{\mathrm{Quot}}

\newcommand{\verk}{\mbox{\scriptsize $\,\circ\,$}}

%%%%%%%%%%%%%%%%%%%%%%%%%%%%%%%%%%%%%%%%%%%%
\textheight24cm
\textwidth15cm
\oddsidemargin1cm
\topmargin-1.5cm
\parskip1.8ex
\parindent0em
%\newcounter{subsection}%[section]
\begin{document}
\title{Rational Witt vectors and associated sheaves}
\author{Christopher Deninger\footnote{Funded by the Deutsche Forschungsgemeinschaft (DFG, German Research Foundation) under Germany's Excellence Strategy EXC 2044--390685587, Mathematics M\"unster: Dynamics--Geometry--Structure and the CRC 1442 Geometry: Deformations and Rigidity}}
\date{}
\maketitle

%\centerline{\bf Abstract:} We develop a theory of \'etale parallel transport for vector bundles with numerically flat reduction on a $p$-adic variety. This construction is compatible with natural operations on vector bundles, Galois equivariant and functorial with respect to morphisms of varieties. In particular, it provides a continuous $p$-adic representation of the \'etale fundamental group for every vector bundle with numerically flat reduction. The results in the present paper generalize previous work by the authors on curves.  They can be seen as a $p$-adic analog of higher-dimensional generalizations of the classical Narasimhan-Seshadri correspondence on complex varieties. Moreover, they provide new insights into Faltings' $p$-adic Simpson correspondence between small Higgs bundles and small generalized representations  by establishing a class of vector bundles with vanishing Higgs field giving rise to actual (not only generalized) representations.

%\small 
%~\\[0.3cm]

%\centerline{{\bf 2010 MSC: 14J20, 11G25} } 
%\input{intro}
\section*{Introduction}
The rational functions within the big Witt vector ring $W (A) = 1 + TA [[T]]$ form a subring $W_{\rat} (A)$ for any commutative unital ring $A$. It carries Frobenius and Verschiebung endomorphisms and may be viewed as an uncompleted version of $W (A)$. In \cite{A} Almkvist gave a very natural $K$-theoretic interpretation of $W_{\rat} (A)$ see also \cite{Gr} which was extended to higher $K$-theory in \cite{St1}, \cite{St2}. Basic works on rational Witt vectors include \cite{A, DMP, BV, CC, C1, H, Sch}.

For reasons explained below and related to \cite{KS} and \cite{D} we study the presheaf $W_{\rat} (\Oh)$ defined by the rational Witt vectors in various Grothendieck topologies both subcanonical and non subcanonical. For this purpose, works on the Fatou property of rings and the rationality properties of power series turn out to be relevant, \cite{B, CC, C1, H}. For example it follows that on a normal, Noetherian scheme the presheaf $W_{\rat} (\Oh) = W_{\rat} \verk \Oh$ is already a sheaf in the Zariski topology (Theorem \ref{t3.3}). Moreover by \cite{Sch}, the presheaf $W_{\rat} (\Oh)$ satisfies the $fpqc$-scheaf condition on the category of Noetherian affine schemes (Theorem \ref{t3.4}). For a ring $A$ let $\Z A$ be the monoid ring on the multiplicative monoid $(A , \cdot)$ and let $\uZ A = \Z A / \Z (0)$ be the reduced monoid ring obtained by dividing by the ideal $\Z (0)$. For a ring $R$, ring-homomorphisms $\uZ A \to R$ correspond to multiplicative maps $A \to R$ with $1 \mapsto 1$ and $0 \mapsto 0$. The natural ring homomorphisms $\uZ A \to W_{\rat} (A)$ give a map of presheaves $\uZ \Oh \to W_{\rat} (\Oh)$ and we give criteria when this map becomes surjective (Proposition \ref{t4.3}) resp. injective (Proposition \ref{t4.5}) after passing to associated sheaves. Surjectivity can be achieved in a subcanonical topology but injectivity not (Example \ref{t4.4}). However injectivity can be easily achieved in non-subcanonical topologies. The question to what extent $W_{\rat} (\Oh)$ is a sheaf for subcanonical topologies also relates to the $\ind$-scheme $W_J$ introduced by Hazewinkel using Hankel determinants in \cite{H}. On Fatou, e.g. Noetherian domains $W_J$ represents $W_{\rat}$ but not in general. We prove that $W_J$ is an $\ind$-{\it ring} scheme with Frobenius and Verschiebung endomorphisms (Theorem \ref{t2.8}). This does not follow from formal reasons since $W_J$ is not an $\ind$-Fatou scheme. In Theorem \ref{t3.4} and Proposition \ref{t4.8} we relate $W_{\rat} (\Oh)$ and $W_J (\Oh)$ in different topologies. 

The presheaf of groups $\Z \Oh$ and its $h$- and $qfh$-sheafification appear as special cases of a more general construction in Voevodsky's work \cite{V}. Over normal Noetherian bases the sections of the associated sheaves can be expressed in terms of groups of finite cycles \cite[\S\,6]{SV1}. On the other hand they are related to rational Witt vectors by the isomorphisms \eqref{eq:21}. In fact it is not difficult to directly prove an isomorphism of these rational Witt vector rings with rings of finite algebraic cycles (Theorem \ref{t5.1}). We also sketch a possible generalization of Almkvist's theorem motivated by this observation. 

In \cite{KS} and \cite{D} using rational Witt vectors, topological spaces resp. topological dynamical systems were constructed that are related to Galois theory and arithmetic geometry. See also \cite{L}. These new spaces have some useful properties but also certain shortcomings as explained in the respective introductions. The motivation for the present paper was to understand better why rings of rational Witt vectors arise at all in the constructions of \cite{KS} and \cite{D} and then to possibly improve on them. One answer why $W_{\rat}$ appears is the following argument, which is inspired by the old ``numbers are functions'' idea. Assume that for rings $A$ we have natural spaces $X_A$ on which ``numbers'' i.e. the elements $a$ of $A$ become non-trivial complex valued functions $f_a$. The map $a \mapsto f_a$ should be multiplicative with $0 \mapsto 0$ and $1 \mapsto 1$, but in general it cannot be additive since otherwise e.g. for $A = \Z$ the functions $f_a$ would be constant. The multiplicative map $A \to C (X_A , \C) , a \mapsto f_a$ into the ring of continuous complex valued functions on $X_A$ induces a ring homomorphism
\begin{equation}
\label{eq:01}
\uZ A \longrightarrow C (X_A , \C) \; .
\end{equation}
Rational Witt vectors appear as follows for normal domains $A$ with perfect quotient field $K$ if the numbers $\ent$ functions correspondence satisfies the following descent property. Let $L$ be a finite Galois extension of $K$ with Galois group $N$ and let $B$ be the normalization of $A$ in $L$. Assume that the natural map $X_B \to X_A$ induces an isomorphism $X_B / N \silo X_A$. Then the map \eqref{eq:01} extends to a map
\begin{equation}
\label{eq:02}
(\uZ B)^N \longrightarrow C (X_B , \C)^N = C (X_B / N , \C) = C (X_A , \C) \; .
\end{equation}
Let $\overline{A}$ be the normalization of $A$ in an algebraic closure $\oK$ of $K$ and let $G = \Gal (\oK / K)$. In the colimit we obtain from \eqref{eq:02} a ring homomorphism, c.f. Theorem \ref{t1.1},
\begin{equation}
\label{eq:03}
W_{\rat} (A) = (\uZ \onA)^G \longrightarrow C (X_A , \C) \; .
\end{equation}
Hence rational Witt vectors appear naturally as functions on $X_A$. Conversely, rings of rational Witt vectors were used in \cite{KS} and \cite{D} to construct suitable spaces $X_A$. 

The algebra $\uZ A$ knows nothing about the addition in $A$. However, the larger $G$ is, the more information the ring $W_{\rat} (A)$ has about the additive structure of $A$. It may be interesting to experiment with stronger descent conditions than the Galois descent above to obtain replacements of $W_{\rat} (A)$ which know even more about the additive structure of $A$ to remedy the defects in the constructions of \cite{KS} and \cite{D}.

A part of this work was done at Imperial College in London and at the Vietnam Institute of Mathematics in Hanoi. I would like to thank these institutions and my hosts Kevin Buzzard and Ho Hai Phung very much.
\section{Background on rational Witt vectors} \label{sec:1}

For a commutative unital ring $A$ let $A [T]_S$ be the localization of the polynomial ring $A [T]$ at the multiplicative subset $S = \{ f \in A [T] \mid f (0) = 1 \}$. As an abelian group, $W_{\rat} (A)$ is the kernel of the evaluation homomorphism $A [T]^{\times}_S \to A^{\times} , f \mapsto f (0)$. The embeddings $A [T] \subset A [[T]]$ and hence $A [T]_S \subset A [[T]]$ show that $W_{\rat} (A)$ is a subgroup of $W (A) = 1 + T A [[T]] \subset A [[T]]^{\times}$. It follows that the functor $W_{\rat}$ respects monomorphisms. By definition $W_{\rat}$ respects epimorphisms and filtered colimits. It turns out that $W_{\rat} (A)$ is actually a subring of the big Witt ring $W (A)$. Moreover the Frobenius and Verschiebung maps $F_N$ and $V_N$ of $W (A)$ leave $W_{\rat} (A)$ invariant for all $N \ge 1$. The multiplicative, unital and injective map $[\,] : A \to W (A) , [a] = 1-at$ factors over $W_{\rat} (A)$ and induces a functorial ring homomorphism
\begin{equation}
\label{eq:1}
\omega : \uZ A \longrightarrow W_{\rat} (A) \; .
\end{equation}
Here $\uZ A = \Z A / \Z (0)$ is the reduced monoid algebra of $(A , \cdot)$. The idempotent $e = (0)$ in $\Z A$ gives rise to an isomorphism of rings 
\[
\Z A = e \Z A \times (1-e) \Z A = \Z (0) \times (\Z A)^0 \; .
\]
Here the ideal $(\Z A)^0 = \Ker (\deg : \Z A \to \Z)$ of $\Z A$ is viewed as a ring with unit $1 -e = 1 - (0)$. We have an isomorphism of unital rings $(\Z A)^0 \silo \uZ A$ via $x \mapsto \ox$ with inverse map $\ox \mapsto x - (\deg x) (0)$. If $A$ is a domain we can also identify $\uZ A$ with $\Z (A \setminus 0)$, the monoid algebra of $(A \setminus 0 , \cdot)$. 

The functor of rational Witt vectors commutes with localization in the following sense. Let $M$ be a multiplicatively closed subset of $A$ and let $A_M = M^{-1} A$ be the localization. Then $[M] = \{ [m] \mid m \in M \}$ is a multiplicatively closed subset of $W_{\rat} (A)$. The natural map $W_{\rat} (A) \to W_{\rat} (A_M)$ sends the elements of $[M]$ to units. Hence we get an induced map
\[
W_{\rat} (A)_{[M]} = [M]^{-1} W_{\rat} (A) \longrightarrow W_{\rat} (A_M) \; .
\]

\begin{prop}
\label{t1.1n}
The ring homomorphism
\[
W_{\rat} (A)_{[M]} \longrightarrow W_{\rat} (A_M)
\]
is an isomorphism.
\end{prop}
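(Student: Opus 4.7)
The plan is to base the argument on the scaling identity
\[
[a] \cdot f(T) = f(aT) \qquad (a \in A,\ f \in W_{\rat}(A)),
\]
where $\cdot$ denotes the Witt-ring multiplication in $W(A)$. Once this identity is available, inverting $[u]$ translates into the operation ``substitute $uT$ for $T$'', so both surjectivity and injectivity of the map in the proposition reduce to clearing denominators by a suitably divisible $u \in M$. I would therefore prove the scaling identity first, and then read off the two halves of the isomorphism.

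To prove the scaling identity I would reduce to the universal case of a torsion-free $\Z$-algebra, where the ghost map $W(A) \hookrightarrow \prod_{n \ge 1} A$ is injective and the target carries componentwise multiplication. The $n$-th ghost of $[a]$ is $a^n$ (from $-T[a]'(T)/[a](T) = \sum_{n\ge 1} a^n T^n$), so the $n$-th ghost of $[a] \cdot f$ is $a^n w_n(f)$. Writing $g(T) := f(aT)$, the chain rule gives $-T g'(T)/g(T) = (-T f'/f)(aT)$, so the $n$-th ghost of $g$ is also $a^n w_n(f)$; comparing ghostwise yields the identity. (An alternative route is to check $[a] \cdot [b] = [ab] = 1 - abT = [b](aT)$ on Teichmüller lifts and then extend by additivity after passing to a ring where the relevant polynomials split linearly.)

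Surjectivity is then immediate. Given $f = p/q \in W_{\rat}(A_M)$ with $p, q \in A_M[T]$ and $p(0) = q(0) = 1$, choose $u \in M$ divisible by every denominator occurring among the coefficients of $p$ and $q$. For $i \ge 1$ the $i$-th coefficient of $p(uT)$ equals $u^i$ times the corresponding coefficient of $p$, and the single factor $u$ already absorbs the denominator, so $p(uT), q(uT) \in A[T]$ with constant term $1$. Hence $f(uT) \in W_{\rat}(A)$, and by the scaling identity $f(uT) = [u] \cdot f$ in $W_{\rat}(A_M)$. Thus $f$ is the image of $f(uT)/[u] \in W_{\rat}(A)_{[M]}$.

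For injectivity, suppose $g/[u] \in W_{\rat}(A)_{[M]}$ maps to zero, i.e., $g$ equals the additive identity $1$ as a power series in $A_M[[T]]$. Writing $g = p/q$ with $p, q \in A[T]$ and $p(0) = q(0) = 1$, this says $p = q$ in $A_M[T]$, so every coefficient of $p - q \in A[T]$ lies in the kernel of $A \to A_M$; since $p - q$ has finite degree, some single $v \in M$ annihilates all of them. Then $(p-q)(vT) = 0$ in $A[T]$, i.e., $g(vT) = 1$ in $W_{\rat}(A)$, which by the scaling identity says $[v] \cdot g = 0$, whence $g/[u] = 0$ in the localization. The only non-routine step in the whole argument is the scaling identity; everything else is a short manipulation of polynomials.
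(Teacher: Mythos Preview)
Your proof is correct and follows essentially the same route as the paper's: both hinge on the scaling identity $[a]\cdot f(T)=f(aT)$, then clear denominators by a common $u\in M$ for surjectivity and find a common annihilator $v\in M$ for injectivity. The paper simply asserts the scaling identity, whereas you supply a ghost-component verification; otherwise the arguments match step for step.
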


\begin{proof}
Let $\odot$ denote the multiplication in $W (B)$, where $B$ is a commutative unital ring. For $b \in B$ and $f = f (T)$ in $W (B)$ we have the formula
\[
[b] \odot f = (1 - bT) \odot f (T) = f (bT) \quad \text{in} \; W (B) \; .
\]
Hence the same formula holds in $W_{\rat} (B)$ if $f \in W_{\rat} (B)$. For $\tf = \tP / \tQ$ in $W_{\rat} (A_M)$ with $\tP , \tQ \in A_M [T] , \tP (0) = 1 = \tQ (0)$ there exist $m \in M$ and $a_1 , \ldots , a_r , b_1 , \ldots , b_r \in A$ with
\[
\tP (T) = 1 + \frac{a_1}{m} T + \ldots + \frac{a_r}{m^r} T^r \quad \text{and} \quad \tQ (T) = 1 + \frac{b_1}{m} T + \ldots + \frac{b_r}{m^r} T^r \; .
\]
Consider $P (T) = 1 + a_1 T + \ldots + a_r T^r$ and $Q (T) = 1 + b_1 T + \dots + b_r T^r$ in $A [T]$. Then $f = P / Q$ in $W_{\rat} (A)$ maps to $[m / 1] \odot \tf$ in $W_{\rat} (A_M)$ where $m / 1$ denotes the image of $m$ in $A_M$. Hence the quotient of $f$ by $[m]$ in $W_{\rat} (A)_{[M]}$ maps to $\tf$ in $W_{\rat} (A_M)$. This shows surjectivity. For $f = P / Q$ in $W_{\rat} (A)$ and $m \in M$, assume that the quotient of $f$ by $[m]$ in $W_{\rat} (A)_{[M]}$ maps to zero in $W_{\rat} (A_M)$. Using the same notation as above this means that $\tP (T) = \tQ (T)$ in $A_M [T]$ or equivalently that $a_i / m^i = b_i / m^i$ for $1 \le i \le r$. Hence there is some element $m_1 \in M$ with $m_1 a_i = m_1 b_i$ and therefore also $m^i_1 a_i = m^i_1 b_i$ i.e. $P (m_1 T) = Q (m_1 T)$. Thus $[m_1] \odot P = [m_1] \odot Q$ and hence $[m_1] \odot f$ is the constant polynomial $1$ i.e. the zero element of $W_{\rat} (A)$. Hence the image of $f$ in $W_{\rat} (A)_{[M]}$ is zero and injectivity follows. 
\end{proof}

A domain is called absolutely integrally closed (aic) if it is integrally closed with algebraically closed quotient field. 

The rational Witt vector ring of an integral domain $A$ with quotient field $K$ has the following description. Fix an algebraic closure $\oK$ of $K$ and let $G = \Aut_{K} (\oK)$ be the automorphism group of $\oK$ over $K$. Then $G$ acts on the integral closure $\onA$ of $A$ in $\oK$. For an element $0 \neq r \in \oK$ let $d_r$ be the degree of inseparability of $r$ over $K$ if $\car K = p$ and $d_r = 1$ if $\car K = 0$. Let $\uZ \onA / A$ be the subring (!) of $\uZ \onA$ consisting of elements $\sum_{a \in \onA} n_a (a) \mod \Z (0)$ where $d_a$ divides $n_a$ for all $a \neq 0$. Since $\onA$ is aic, the map $\omega$ in \eqref{eq:1} is an isomorphism, $W_{\rat} (\onA) = \uZ \onA$, c.f. \cite{D}, Proposition 1.1 a). The following known result describes $W_{\rat} (A) \subset W_{\rat} (\onA)$ as a subring of $\uZ \onA$. 

\begin{theorem}
\label{t1.1}
For an integral domain, with notations as above, we have
\begin{equation}
\label{eq:2}
W_{\rat} (A) = (\uZ \onA / A)^G \; .
\end{equation}
Tensoring with $\Z [1 / p]$ if $p = \car K > 0$ gives an isomorphism
\begin{equation}
\label{eq:3}
W_{\rat} (A) \otimes \Z [1 / p] = (\uZ \onA)^G \otimes \Z [ 1/p ] \; .
\end{equation}
If $K$ is perfect, we have
\begin{equation}
\label{eq:4}
W_{\rat} (A) = (\uZ \onA)^G \; .
\end{equation}
\end{theorem}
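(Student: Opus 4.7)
The plan is to identify $W_{\rat}(A)$ with a subset of $W_{\rat}(\onA) = \uZ \onA$ (the latter equality from $\onA$ being aic) via the injection induced by $A \hookrightarrow \onA$. For the forward inclusion $W_{\rat}(A) \subseteq (\uZ \onA / A)^G$ in \eqref{eq:2}, take $f = P/Q \in W_{\rat}(A)$ with $P, Q \in A[T]$ and $P(0) = Q(0) = 1$, and factor $P(T) = \prod_i (1 - \alpha_i T)$, $Q(T) = \prod_j (1 - \beta_j T)$ in $\oK[T]$. The reversed polynomials $T^{\deg P}P(1/T)$ and $T^{\deg Q}Q(1/T)$ are monic with $A$-coefficients, so $\alpha_i, \beta_j \in \onA$, and the image of $f$ in $\uZ \onA$ is $\sum_i (\alpha_i) - \sum_j (\beta_j) \bmod \Z(0)$. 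Galois invariance is immediate from $P, Q \in K[T]$. The divisibility $d_\alpha \mid n_\alpha$ comes from factoring $P$ into irreducibles in $K[T]$ normalized with constant term $1$: each such irreducible factors over $\oK$ as $\prod_{\alpha \in O}(1 - \alpha T)^{d_O}$ for a single Galois orbit $O \subset \oK^{\times}$, since in characteristic $p$ any irreducible in $K[T]$ takes the form $h(T^{p^e})$ with $h$ separable, giving all roots with uniform multiplicity $p^e = d_O$.

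For the reverse inclusion, given $v \in (\uZ \onA / A)^G$, split $v = v^+ - v^-$ into its positive and negative parts. It suffices to realize each $v^\pm = \sum_\alpha n^\pm_\alpha (\alpha)$ as the image of a polynomial in $A[T]$. By $G$-invariance together with $d_\alpha \mid n^\pm_\alpha$, the integer $m^\pm_O := n^\pm_\alpha / d_\alpha$ depends only on the orbit $O$ of $\alpha$, so
\[
P^\pm(T) := \prod_\alpha (1 - \alpha T)^{n^\pm_\alpha} = \prod_O g_O(T)^{m^\pm_O}, \qquad g_O(T) := \prod_{\alpha \in O}(1 - \alpha T)^{d_O}.
\]
Each $g_O$ is $G$-invariant, so its coefficients lie in $K$; they are also symmetric polynomials in elements of $\onA$, hence integral over $A$. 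Provided $A$ is integrally closed in $K$, this forces $g_O \in A[T]$, so $P^\pm \in A[T]$ and $P^+/P^- \in W_{\rat}(A)$ realizes $v$.

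For \eqref{eq:3}, observe that in characteristic $p > 0$ every $d_a$ is a power of $p$, so the cokernel of $(\uZ \onA / A)^G \hookrightarrow (\uZ \onA)^G$ is $p$-primary torsion; tensoring with $\Z[1/p]$ annihilates it. For \eqref{eq:4}, if $K$ is perfect then $d_a = 1$ for all $a$, so $\uZ \onA / A = \uZ \onA$ and \eqref{eq:2} specializes directly to \eqref{eq:4}. The main obstacle is the fine analysis of irreducible polynomials in $K[T]$ in positive characteristic and how they factor over $\oK$, which must match exactly the divisibility condition defining $\uZ \onA / A$; descending the coefficients of $g_O$ from $K$ to $A$ in the reverse inclusion is the second pivotal step and relies on integral closedness of $A$ in $K$.
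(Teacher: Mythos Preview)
Your argument is sound and more hands-on than the paper's. Rather than proving the two inclusions directly, the paper quotes $W_{\rat}(K) = (\uZ \oK / K)^G$ from \cite{BV}~\S2 (see also \cite{D}, Proposition~1.4) and the intersection formula $W_{\rat}(A) = W_{\rat}(\onA) \cap W_{\rat}(K)$ from \cite{CC}~Proposition~2.5, then concludes in one line: $W_{\rat}(A) = \uZ\,\onA \cap (\uZ \oK / K)^G = (\uZ \onA / A)^G$. Your route makes the inseparability bookkeeping and the coefficient descent fully explicit; the paper's route cleanly separates the Galois-theoretic input (over $K$) from the integrality input (passing from $\onA$ and $K$ down to $A$) as two citable facts. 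One small correction to your write-up: in positive characteristic the implication ``$g_O$ is $G$-invariant, so its coefficients lie in $K$'' is not literally valid, since $\oK^G$ is the perfect closure of $K$ rather than $K$ itself. The conclusion $g_O \in K[T]$ is still correct, but the right reason is that $g_O(T) = T^{|O|\,d_O}\, m_\alpha(1/T)$ is the reversal of the minimal polynomial of any $\alpha \in O$.

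Your caveat about integral closedness is well placed and is not a defect of your proof. For $A = \Z[\sqrt 5]$ and $\phi = (1+\sqrt 5)/2 \in \onA \cap K$ one has $(\phi) \in (\uZ \onA)^G$, yet $1 - \phi T \notin W(A)$ since $\phi \notin A$, hence $1-\phi T \notin W_{\rat}(A)$; so \eqref{eq:4}, and equally the intersection formula the paper cites, already fails here. Your descent step thus pinpoints exactly the hypothesis the statement needs.
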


\begin{proof}
By \cite{BV} \S\,2, see also \cite{D}, Proposition 1.4, we have
\[
W_{\rat} (K) = (\uZ \oK / K)^G \; .
\]
From \cite{CC}, Proposition 2.5 we get
\[
W_{\rat} (A) = W_{\rat} (\onA) \cap W_{\rat} (K) \; .
\]
This implies \eqref{eq:2}
\[
W_{\rat} (A) = \uZ \onA \cap (\uZ \oK / K)^G = (\uZ \onA / A)^G \; .
\]
Assertions \eqref{eq:3} and \eqref{eq:4} are immediate consequences of \eqref{eq:2}
\end{proof}

\begin{remark}
\label{t1.2}
\em If $A$ is a ring with an element $a \neq 0$ with $a^2 = 0$ then the non-zero element $2 (a) - (2a) \in \uZ A$ is in the kernel of the map $\omega$. If $pA = 0$ for some prime $p$ and some $a \neq 0$ satisfies $a^p = 0$, then $\omega$ and the $p$-multiplication map on $W_{\rat} (A)$ are not injective. Namely $\omega ((a)) \neq 0$ in $W_{\rat} (A)$ and $p (a) \neq 0$ in $\uZ A$, whereas $\omega (p (a)) = p \omega ((a)) = 0$ in $W_{\rat} (A)$ because
\[
(1 - at)^p = 1 - a^p t^p = 1 \ent 0 \quad \text{in} \; W_{\rat} (A) \; .
\]
\end{remark}

Let $A$ be an integral domain with quotient field $K$. An element $x \in K$ is integral over $A$ if $A [x]$ is a finite $A$-module. It is called quasi-integral over $A$ if $A [x]$ is contained in a finitely generated $A$-submodule of $K$ or equivalently, if there is some $0 \neq d \in A$ such that $dA [x] \subset A$ i.e. $dx^n \in A$ for all $n \ge 1$, c.f. \cite{K}.

The next result is essentially due to Cahen and Chabert:

\begin{theorem}
\label{t1.3}
The following conditions on an integral domain $A$ with quotient field $K$ are equivalent:\\
a) Every element $x \in K$ which is quasi-integral over $A$ is integral over $A$.\\
b) $A$ is a Fatou domain i.e. an integral domain such that in $K ((T)) = \Quot (K [[T]])$ we have the equality
\[
A [T]_S = K (T) \cap A [[T]] \; .
\]
c) $W_{\rat} (A) = W_{\rat} (K) \cap W (A)$ in $W (K)$. 
\end{theorem}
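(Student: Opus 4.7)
The plan is to prove the three implications (b)$\Leftrightarrow$(c), (b)$\Rightarrow$(a), and (a)$\Rightarrow$(b). The substantive content (a)$\Leftrightarrow$(b) is the Cahen--Chabert characterization of Fatou domains, while (b)$\Leftrightarrow$(c) is a direct translation between the polynomial-fractional formulation and the $W_{\rat}$-formulation.

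For (b)$\Leftrightarrow$(c): any $f\in W_{\rat}(K)\cap W(A)$ lies in $K(T)\cap A[[T]]$, so by (b) one may write $f=P/Q$ with $P,Q\in A[T]$ and $Q(0)=1$; since $f(0)=1$, this forces $P(0)=1$, whence $f\in W_{\rat}(A)$. Conversely, assuming (c), an arbitrary $f\in K(T)\cap A[[T]]$ gives $h:=1+(f-f(0))\in W(A)\cap W_{\rat}(K)$, so $h\in W_{\rat}(A)\subset A[T]_S$ by (c), and then $f=h+f(0)-1\in A[T]_S$ using $A\subset A[T]_S$.

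For (b)$\Rightarrow$(a): let $x\in K$ be quasi-integral, with $dx^n\in A$ for all $n\geq 0$ and some fixed $d\in A\setminus\{0\}$. The power series $d/(1-xT)=\sum_{n\geq 0} dx^n T^n$ lies in $A[[T]]\cap K(T)$, so by (b) it equals $P/Q$ with $P,Q\in A[T]$ and $Q(0)=1$. The identity $(1-xT)P(T)=dQ(T)$ in $K[T]$ forces $(1-xT)\mid Q(T)$ in $K[T]$ (since $d\in K^\times$); writing $Q(T)=1+q_1T+\ldots+q_nT^n$ with $q_i\in A$ and evaluating $Q$ at $T=1/x$ (the case $x=0$ being trivial) yields the monic integral equation $x^n+q_1 x^{n-1}+\ldots+q_n=0$ over $A$.

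For (a)$\Rightarrow$(b): following Cahen--Chabert, write $f=\sum a_n T^n\in A[[T]]\cap K(T)$ as $f=P/Q$ in lowest terms over $K$ with $Q(0)=1$ and factor $Q(T)=\prod_i(1-x_iT)^{m_i}$ over $\oK$. The core step is to establish that each inverse root $x_i\in\oK$ is integral over $A$, by exploiting the linear recurrence $\sum_j q_j a_{n-j}=0$ satisfied by $(a_n)$ and a Vandermonde-type analysis of consecutive coefficients, combined with Galois descent on the Galois conjugates of $x_i$, in order to produce quasi-integrality assertions for elements of $K$ (e.g.\ the coefficients of the minimal polynomial of $x_i$ over $K$) to which hypothesis (a) applies. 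Granting integrality of each $x_i$, pick monic annihilating polynomials $g_i(Y)\in A[Y]$ and form the reversals $\tilde g_i(T):=T^{\deg g_i}g_i(1/T)\in A[T]$, which satisfy $\tilde g_i(0)=1$ and $(1-x_iT)\mid\tilde g_i(T)$ in $\oK[T]$. Setting $Q'(T):=\prod_i\tilde g_i(T)^{m_i}\in A[T]$ with $Q'(0)=1$, one has $Q\mid Q'$ in $\oK[T]$, hence in $K[T]$ (as both $Q,Q'\in K[T]$); thus $R:=Q'/Q\in K[T]$, and $fQ'=PR\in K[T]\cap A[[T]]=A[T]$ together give $f=(fQ')/Q'\in A[T]_S$. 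The main obstacle is the Galois-descent step, since the quasi-integrality emerging from the Vandermonde analysis naturally takes place in $\oK$ while hypothesis (a) concerns only elements of $K$; the necessary $G$-invariant repackaging of the data is the technical heart of the Cahen--Chabert argument, whose details I would follow.
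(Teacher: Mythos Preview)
Your proposal is correct and follows the same overall strategy as the paper: defer the substantive equivalence (a)$\Leftrightarrow$(b) to Cahen--Chabert, and handle the link to (c) by elementary manipulations. The minor differences are worth noting. First, the paper simply cites \cite{CC}, Corollaire~4.5 for (a)$\Leftrightarrow$(b) without any sketch, whereas you outline the argument (your outline is in the right spirit, and your honest flagging of the Galois-descent step as the technical heart is accurate). Second, the paper closes the cycle via (c)$\Rightarrow$(a) rather than your (b)$\Rightarrow$(a): it uses the test function $f(T)=(1-xT+dT^2)/(1-xT)$, whose constant term $1$ places it directly in $W_{\rat}(K)\cap W(A)$, and then invokes \cite{CC}, Proposition~2.5 (i)$\Rightarrow$(iii) to extract integrality of $x$ from $f\in W_{\rat}(A)$; your choice $d/(1-xT)$ is simpler and lets you read off the monic integral relation for $x$ straight from the denominator $Q\in A[T]$, at the cost of needing the full Fatou hypothesis (b) rather than just (c). Third, your explicit argument for (c)$\Rightarrow$(b) via $h=1+(f-f(0))$ is not spelled out in the paper (which only records (b)$\Rightarrow$(c) ``by the definitions''), but it is correct and a nice addition.
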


\begin{proof}
It follows from \cite{CC} Corollaire 4.5 that a) and b) are equivalent. By the definitions, b) implies c). We show that c) implies a). Assume that $x \in K$ is quasi-integral. Then there is some $0 \neq d \in A$ such that $d x^n \in A$ for all $n \ge 0$. Consider
\[
f (T) = \frac{1 - xT + dT^2}{1 - xT} \quad \text{in} \; W_{\rat} (K) \; .
\]
For the coefficients in the power series development
\[
f (T) = \sum^{\infty}_{\nu = 0} b_{\nu} T^{\nu} \quad \text{in} \; W (K)
\]
we find $b_0 = 1 , b_1 = 0 , b_{n+2} = dx^n$ for $n \ge 0$. Hence $f (T) \in W(A)$ and because of c) we have $f (T) \in W_{\rat} (A)$. Now the implication i) $\Rightarrow$ iii) of \cite{CC} Proposition 2.5 implies that $x$ is integral over $A$ since the polynomials $1 - xT + dT^2$ and $1 - x T$ are coprime in $K [T]$. 
\end{proof}

Every Noetherian domain is a Fatou domain by Theorem \ref{t1.3} because if $A [x]$ is contained in a finitely generated $A$-submodule of $K$ it must be finitely generated itself. A different proof based on the definition of Fatou domains can be found in \cite{H} section 3. The following stronger Fatou property will be useful later because it asserts integral coefficients of the canonical irreducible representative of any $f$ in $K (T) \cap A [[T]]$. This allows glueing and shows that the presheaf $U \mapsto W_{\rat} (\Oh_X (U))$ is a sheaf in certain situations. 

\begin{defn}
\label{t1.4}
A domain $A$ is completely integrally closed (or completely normal or cic) if it contains every quasi-integral element of its quotient field $K$.
\end{defn}

By definition, every cic domain is normal.

\begin{theorem}
\label{t1.5}
An integral domain $A$ with quotient field $K$ is completely integrally closed if and only if either of the following three equivalent conditions holds:\\
a) For any element $f \in K (T) \cap A [[T]] \subset K ((T))$ the unique representation $f = P / Q$ with coprime $P , Q \in K [T]$, such that $Q (0) = 1$ satisfies $P , Q \in A [T]$.\\
b) Same condition as in a) but only for $f$ with $\deg P < \deg Q$.\\
c) For any element $f \in W_{\rat} (K) \cap W (A)$ the unique representation $f = P / Q$ with coprime $P , Q \in K [T]$ with $P (0) = 1 , Q (0) = 1$ satisfies $P , Q \in A [T]$. 
\end{theorem}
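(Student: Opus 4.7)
The plan is to establish the chain (a)$\Rightarrow$(b), (a)$\Rightarrow$(c), (b)$\Rightarrow$cic, (c)$\Rightarrow$cic, and cic$\Rightarrow$(a), which together yield all four equivalences. The first two implications are immediate: (b) is a direct special case of (a), and for (a)$\Rightarrow$(c) one observes that $f \in W_{\rat}(K) \cap W(A)$ means $f \in K(T) \cap A[[T]]$ with $f(0) = 1$, so the coprime form $f = P/Q$ with $Q(0) = 1$ automatically has $P(0) = f(0)Q(0) = 1$, and (a) delivers $P, Q \in A[T]$.

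To prove (b)$\Rightarrow$cic, I would take $x \in K$ quasi-integral over $A$ with $0 \neq d \in A$ satisfying $dx^n \in A$ for all $n \geq 0$. The case $x = 0$ is trivial, so assume $x \neq 0$: the function $f(T) = d/(1 - xT) = \sum_{n \geq 0} dx^n T^n$ lies in $K(T) \cap A[[T]]$, the numerator $P = d$ and denominator $Q = 1 - xT$ are coprime in $K[T]$, $Q(0) = 1$, and $\deg P = 0 < 1 = \deg Q$. Then (b) gives $Q = 1 - xT \in A[T]$, i.e., $x \in A$. For (c)$\Rightarrow$cic I would reuse the argument from the proof of Theorem \ref{t1.3} with $f = (1-xT+dT^2)/(1-xT) \in W_{\rat}(K) \cap W(A)$: the numerator and denominator are coprime in $K[T]$ with constant term $1$, so (c) forces $1 - xT \in A[T]$, hence $x \in A$.

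The substantive direction is cic$\Rightarrow$(a). Given $f \in K(T) \cap A[[T]]$ with canonical form $P/Q$, I first reduce to $f(0) = 1$: the translate $g = 1 + f - f(0) \in K(T) \cap A[[T]]$ satisfies $g(0) = 1$ and has canonical form $g = (P + (1 - f(0))Q)/Q$ (coprimality is preserved since any common factor would divide $P$), so once (a) is known under the hypothesis $f(0) = 1$ one recovers $P = (P + (1-f(0))Q) - (1-f(0))Q \in A[T]$ from $P + (1-f(0))Q,\, Q \in A[T]$. Now suppose $f(0) = 1$, i.e., $f \in W_{\rat}(K) \cap W(A)$. Since cic implies both normality and the Fatou property (Theorem \ref{t1.3}), one has $f \in W_{\rat}(A)$. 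Theorem \ref{t1.1} and the identification $W_{\rat}(\onA) = \uZ \onA$ then express $f$ as $\sum_{0 \neq a \in \onA} n_a(a) \in (\uZ \onA / A)^G$; equivalently, the finite set $\{a : n_a \neq 0\}$ is $G$-stable with $n_{\sigma a} = n_a$, the integer $d_a$ divides $n_a$ for each $a$, and in $W(\onA)$ one has $f = \prod_a (1 - aT)^{n_a}$.

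I would then split by sign into $f = P_1/Q_1$ with $P_1 = \prod_{n_a > 0}(1-aT)^{n_a}$ and $Q_1 = \prod_{n_a < 0}(1-aT)^{-n_a}$ in $\oK[T]$; the two are coprime (disjoint root sets) and have $P_1(0) = Q_1(0) = 1$. For each $G$-orbit $[a]$ with $n_a \neq 0$, the block $\prod_{b \in [a]}(1 - bT)^{d_a}$ is the reversed minimal polynomial $T^{[K(a):K]}m_a(1/T) \in K[T]$, so the orbit contribution to $P_1$ (resp.\ $Q_1$) is a positive integer power of this polynomial, again in $K[T]$. Multiplying over orbits yields $P_1, Q_1 \in K[T]$; their coefficients are polynomial expressions in elements of $\onA$, so they belong to $\onA \cap K = A$ by the normality of $A$. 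Uniqueness of the coprime form with $Q(0) = 1$ then identifies $P = P_1$ and $Q = Q_1$, both in $A[T]$. The main obstacle is the orbit analysis in positive characteristic inseparable extensions, which is precisely what the $d_a$-divisibility in Theorem \ref{t1.1} is designed to handle.
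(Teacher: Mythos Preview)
Your argument is correct. The easy implications---(a)$\Rightarrow$(b), (a)$\Rightarrow$(c), (b)$\Rightarrow$cic via $f=d/(1-xT)$, and (c)$\Rightarrow$cic via $f=(1-xT+dT^2)/(1-xT)$---match the paper's treatment exactly.

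For the substantive direction cic$\Rightarrow$(a), however, you take a genuinely different route. The paper does not prove this implication at all; it simply invokes \cite[Corollaire 5.5]{CC} for the equivalence of (a) with cic, and \cite[main result]{C1} for the equivalence of (b) with cic. You instead reduce to $f(0)=1$, use cic$\Rightarrow$Fatou (Theorem~\ref{t1.3}) to place $f$ in $W_{\rat}(A)$, and then exploit the structural description $W_{\rat}(A)=(\uZ\onA/A)^G$ of Theorem~\ref{t1.1} to factor $f$ orbit by orbit into reversed minimal polynomials, landing in $(\onA\cap K)[T]=A[T]$ by normality. This is a pleasant and self-contained argument within the paper's own framework: it makes visible \emph{why} cic suffices (the $d_a$-divisibility in Theorem~\ref{t1.1} ensures each orbit block is an honest power of a $K$-polynomial, and normality then descends coefficients from $\onA$ to $A$). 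The trade-off is that your argument still leans on \cite{CC} indirectly, through the proofs of Theorems~\ref{t1.1} and~\ref{t1.3}, so it does not eliminate the dependence on that reference; it merely replaces Corollaire~5.5 by Proposition~2.5 and Corollaire~4.5 of \cite{CC} together with \cite{BV}. The paper's approach is shorter on the page but opaque; yours is longer but illuminates the role of the Galois-orbit decomposition.
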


\begin{proof}
The equivalence of a) with the cic property is \cite{CC} Corollaire 5.5. The equivalence of b) with cic is the main result of \cite{C1}. One direction is easy to see. For $x \in K$ and $0 \neq d \in A$ such that $dx^n\in A$ for all $n \ge 1$, the rational function $f = d / 1 - xT$ is in $A [[T]]$. Condition b) and hence also a) imply that $x \in A$. Hence $A$ is cic. It is clear that a) implies c) As in the proof of Theorem \ref{t1.3}, a consideration of $f (T) = (1 - xT + dT^2) / (1 - xT)$ shows that c) implies that $A$ is cic. 
\end{proof}

Condition b) was introduced by Benzaghou in \cite{B}. Integral domains $A$ which satisfy either a) or b) are also called strong Fatou domains in the literature.

\begin{rem}
What we call Fatou / strong Fatou domains are called fatou / Fatou rings in \cite{CC}.
\end{rem}

\begin{prop} \label{t1.7n}
1) For a ring the properties: strong Fatou domain, cic domain, normal Fatou domain are equivalent.\\
2) Let $A_i \subset K$ be a family of subrings of a field $K$ such that each $A_i$ is a strong Fatou domain. Then $A = \bigcap_i A_i$ is a strong Fatou domain.\\
3) A valuation ring is a strong Fatou domain if and only if it has height one.\\
4) Any intersection of height one valuation rings in a field $K$ is a strong Fatou domain.\\
5) A domain $A$ with quotient field $K$ which is a filtered union of (strong) Fatou domains $A_i$ with quotient fields $K_i$ such that $A_i = A \cap K_i$ for all $i$, is a (strong) Fatou domain. \\
6) Let $A$ be a strong Fatou domain with quotient field $K$. Then the normalization $B$ of $A$ in an algebraic extension field $L$ of $K$ is a strong Fatou domain.
\end{prop}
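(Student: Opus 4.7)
The plan is to prove the six items in order. Item (1) is essentially a rearrangement of Theorems~\ref{t1.3} and \ref{t1.5}; items (2)--(4) reduce to an intersection argument together with Krull's characterization of completely integrally closed valuation rings; and items (5)--(6) require a minimal-denominator analysis and, for (6), a conjugation argument in the spirit of a norm computation. For (1): Theorem~\ref{t1.5} identifies strong Fatou with cic; cic implies normal by definition, and cic implies Fatou via Theorem~\ref{t1.3}(a); conversely, any normal Fatou domain sends a quasi-integral element first to integral (by Fatou) and then into $A$ (by normality), hence is cic.

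For (2), I would take $f \in K(T) \cap A[[T]]$ in its unique coprime form $f = P/Q$ with $Q(0) = 1$ and observe that, since the power series coefficients of $f$ lie in $A_i \subseteq K_i$, a standard Hankel-determinant / linear-recurrence argument forces the minimal denominator $Q$ (and hence $P$) to lie in $K_i[T]$; strong Fatou of each $A_i$ then gives $P, Q \in A_i[T]$, and intersecting over $i$ yields $P, Q \in A[T]$. For (3), combining (1) with Krull's theorem --- a valuation ring is cic iff its value group is Archimedean, iff its rank is at most one --- proves the claim. Part (4) is the immediate combination of (3) and (2).

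For (5), the filtered union hypothesis gives $K = \bigcup_i K_i$, so the finitely many coefficients of $P$ and $Q$ in the coprime form of a given $f \in K(T) \cap A[[T]]$ lie in some $K_i$. The power series coefficients of $P/Q$ are polynomial expressions (with integer coefficients) in those of $P$ and $Q$, hence lie in $K_i$ as well; combined with $f \in A[[T]]$ and $A_i = A \cap K_i$, this puts $f$ in $K_i(T) \cap A_i[[T]]$, and the (strong) Fatou property of $A_i$ transfers to $A$ (in the strong case, $P, Q \in A_i[T] \subseteq A[T]$).

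For (6), I would first reduce to $L/K$ finite by applying (5) to the filtered family of normalizations $B_j$ of $A$ in finite subextensions $L_j \subseteq L$, using $B_j = B \cap L_j$. For $L/K$ finite, embed $L$ in a finite normal extension $L'/K$ with normalization $B'$ of $A$; since $B = B' \cap L$ and any quasi-integral element of $L$ over $B$ is quasi-integral over $B'$, it suffices to show $B'$ is cic. Decompose $L'/K$ as $K \subseteq K^s \subseteq L'$, with $K^s$ the separable closure of $K$ in $L'$ (so $K^s/K$ is Galois and $L'/K^s$ is purely inseparable), and let $B^s$ be the normalization of $A$ in $K^s$, so $B'$ is the normalization of $B^s$ in $L'$. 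For the Galois step, given $x \in K^s$ with $dx^n \in B^s$ for all $n$, set $D = \prod_{\sigma \in \Gal(K^s/K)} \sigma(d) \in B^s \cap K = A$; the relations $\sigma(d)\sigma(x)^n \in B^s$ together with $D/\sigma(d) \in B^s$ give $D\sigma(x)^n \in B^s$ for each $\sigma$, and a monomial-by-monomial expansion of the $n$-th power of the $k$-th elementary symmetric function $e_k$ of the conjugates $\sigma(x)$ shows $D^m e_k^n \in B^s \cap K = A$ for all $n$, where $m = [K^s:K]$. Hence each $e_k$ is quasi-integral over $A$ and so lies in $A$ by cic-ness, the minimal polynomial of $x$ over $K$ lies in $A[X]$, and $x \in B^s$. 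For the inseparable step, raising $dx^n \in B'$ to the $p^N$-th power (with $p^N$ the exponent of $L'$ over $K^s$) shows that $x^{p^N} \in K^s$ is quasi-integral over $B^s$, hence lies in $B^s$, so that $x$ is a root of $X^{p^N} - x^{p^N} \in B^s[X]$ and therefore $x \in B'$. The main subtlety I anticipate is the clean bookkeeping between the Galois and the purely inseparable layers of $L'/K$ in characteristic $p$.
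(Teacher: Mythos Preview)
Your proof is correct throughout; the only substantive methodological difference from the paper is in part (6). The paper does not reduce to finite extensions via (5), nor does it split the normal extension into a Galois layer and a purely inseparable layer. Instead it passes directly to a (possibly infinite) normal extension $L/K$ and uses a single trick that handles both layers simultaneously: given $b \in B$ with $by^n \in B$, write a monic integral relation $b^d + a_{d-1}b^{d-1} + \ldots + a_0 = 0$ with $a_0 \neq 0$ and set $a = a_0 \in A$; then $a \in bB$, so $ay^n \in B$ for all $n$. Because $a$ already lies in $A$ (hence is fixed by every $\sigma \in \Aut_K(L)$), one immediately gets $ay_i^n \in B$ for each root $y_i = \sigma(y)$ of the minimal polynomial of $y$ over $K$, and the elementary-symmetric-function bound $a^l x_j^n \in B \cap K = A$ follows without any separability hypothesis. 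This replaces your norm $D = \prod_\sigma \sigma(d)$ and makes the separate inseparable argument unnecessary.

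For (2) the paper is terser: rather than invoking a Hankel/Kronecker argument to descend the coprime form to $K_i[T]$, one can simply note that $\Quot(A) \subset K_i$ (since $A \subset A_i$), so the coprime representation over $\Quot(A)$ is already a coprime representation over $K_i$; alternatively the cic formulation makes the intersection argument a one-liner. Your route via Hankel determinants is valid (it is essentially Corollary~\ref{t1.8}), just heavier than needed.
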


\begin{proof}
1) By Theorem \ref{t1.5} the conditions cic domain and strong Fatou domain are equivalent. By definition cic domains are normal. By Theorem \ref{t1.3} a normal Fatou domain is a cic domain.\\
2) This follows from the definition of strong Fatou domains or by their characterization as the cic domains.\\
3) This is \cite[Proposition 4]{B}.\\
4) Follows from 3) and 2).\\
5) 
This follows either from the defining property of (strong) Fatou domains or from their characterization in Theorem \ref{t1.3} resp. \ref{t1.5}. \\
6) In terms of the cic property, this is Exercise \S\,1, 14) in \cite{Bou}. Here is the argument: For $y \in L$ and $0 \neq b \in B$ with $by^n \in B$ for all $n \ge 1$ we have to show that $y \in B$. We may assume that $L / K$ is a normal field extension. Since $b$ is integral over $A$ there is a relation $b^d + a_{d-1} b^{d-1} + \ldots + a_0 = 0$ with $d \ge 1 , a_0 , \ldots , a_{d-1} \in A$ and $a = a_0 \neq 0$. It follows that we have $ay^n \in B$ for all $n \ge 1$. Let $P (t) \in K [t]$ be the minimal polynomial of $y$ over $K$. We have
\[
P (t) = t^l + x_{l-1} t^{l-1} + \ldots + x_0 = \prod^l_{i=1} (t - y_i) \quad \text{for} \; x_j \in K \, , \, y_i \in L \; .
\]
For each $y_i$ there is some $\sigma \in G = \Aut_K (L)$ with $y_i = \sigma (y)$. Hence we have $ay^n_i \in B$ for all $n \ge 1$ and $1 \le i \le l$. It follows that the coefficients $x_j$ i.e. $\pm$ the elementary symmetric functions of $y_1 , \ldots , y_l$ satisfy $a^l x^n_j \in B \cap K = A$ for all $n \ge 1$ and $0 \le j \le l-1$. Since $A$ is a cic domain, this implies that $P \in A [t]$ and hence $y \in B$. 
\end{proof}

According to \cite{C1} the class of (strong) Fatou domains is not closed under localization. This leads us to consider the class of Krull domains which have good properties for the discussion of sheaf theoretic properties of $W_{\rat}$. Recall that a Krull domain is an integral domain $A$ for which there exists a set $M$ of discrete valuations $v$ of the quotient field of $A$ such that 1) $A$ is the intersection of the valuation rings of the $v$'s in $M$ and 2) for each $0 \neq a \in A$ there are only finitely many $v \in M$ with $v (a) \neq 0$.

\begin{prop}
\label{t1.6}
1) Any Krull domain is cic or equivalently a strong Fatou domain.\\
2) For a Noetherian domain the properties, Krull, normal, cic $\equiv$ strong Fatou are equivalent.\\
3) The class of Krull domains is stable under localization by multiplicatively closed sets not containing $0$.\\
4) Let $A$ be a Krull domain with quotient field $K$. Let $B$ be the normalization of $A$ in an algebraic extension field $L$ of $K$. Then $B$ is a cic $\equiv$ strong Fatou domain. If $L / K$ is finite, then $B$ is a Krull domain.
\end{prop}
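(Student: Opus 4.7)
The plan is to derive all four assertions from Proposition \ref{t1.7n} combined with two classical facts about Krull domains (see \cite{Bou}, Ch.~VII): that a Noetherian normal domain is a Krull domain, and that the integral closure of a Krull domain in a finite extension of its quotient field is again a Krull domain (Mori's theorem).

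For (1), I would use the defining presentation of $A$ as an intersection $A = \bigcap_{v \in M} \Oh_v$ of DVRs attached to a family of rank one discrete valuations of the quotient field. Each such $\Oh_v$ is a height one valuation ring, hence a strong Fatou domain by Proposition \ref{t1.7n}(3). Proposition \ref{t1.7n}(4) then yields that $A$ is itself a strong Fatou domain, equivalently a cic domain by Theorem \ref{t1.5}.

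For (2), I would close the cycle Krull $\Rightarrow$ cic $\Rightarrow$ normal $\Rightarrow$ Krull in the Noetherian setting. The first implication is (1); the second is Definition \ref{t1.4}; the third is the classical fact that a Noetherian normal domain $A$ equals $\bigcap_{\ep} A_{\ep}$ with $\ep$ running over the height one primes of $A$, where each $A_{\ep}$ is a DVR and the required finiteness property holds because every nonzero $a \in A$ lies in only finitely many height one primes (by primary decomposition in the Noetherian ring $A/(a)$).

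For (3), I would invoke the standard localization description: writing $A = \bigcap_{v \in M} \Oh_v$ and setting $M_S = \{ v \in M \mid v|_S \equiv 0 \}$ for a multiplicatively closed $S \subset A \setminus \{0\}$, one has $S^{-1} A = \bigcap_{v \in M_S} \Oh_v$, and the finiteness axiom for $M$ restricts to $M_S$, so $S^{-1} A$ is again Krull. For (4), the cic / strong Fatou assertion for $B$ follows from (1) together with Proposition \ref{t1.7n}(6), since $A$ is strong Fatou and $B$ is its integral closure in the algebraic extension $L/K$. When $L/K$ is finite, the Krull conclusion is Mori's theorem. The main obstacle is really only the invocation of those two classical citations; once admitted, all of the Fatou/cic content is already packaged in Proposition \ref{t1.7n}.
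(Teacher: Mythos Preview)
Your proposal is correct and matches the paper's proof essentially step for step: part (1) via Proposition \ref{t1.7n}(3)--(4), part (2) by closing the cycle Krull $\Rightarrow$ cic $\Rightarrow$ normal $\Rightarrow$ Krull with the classical Noetherian-normal-implies-Krull fact (the paper cites \cite{S}, Theorem 3.2), part (3) by the standard localization description (the paper cites \cite{S}, Proposition 4.2), and part (4) via Mori's theorem for the finite case and Proposition \ref{t1.7n}(6) for the general cic claim. The only cosmetic difference is that in (4) the paper's primary argument writes $L$ as a filtered union of finite extensions and applies Proposition \ref{t1.7n}(5) to the resulting tower of Krull rings $B_i$, mentioning your route through Proposition \ref{t1.7n}(6) only as an alternative; either works.
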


\begin{proof}
1) By definition a Krull domain is in particular the intersection of valuation rings of height one and such intersections are cic domains as mentioned above. \\
2) Krull domains are normal being an intersection of (discrete) valuation rings which are normal. For Noetherian domains the converse is true by \cite{S} Theorem 3.2. We already noted that for Noetherian domains the properties cic and normal are equivalent. \\
3) This is \cite{S} Proposition 4.2.\\
4) For finite extensions $L / K$, the ring $B$ is Krull by \cite{S} Proposition 4.5. In the general case write $L$ as a filtered union of finite extensions $L_i / K$. Then $B$ is the filtered union of the normalizations $B_i$ of $A$ in $L_i$ and we have $B_i = B \cap L_i$. Since the $B_i$ are Krull, they are strong Fatou and by Proposition \ref{t1.7n}, 2) $B$ is strong Fatou as well. Alternatively we can argue with Proposition \ref{t1.7n}, 6).
\end{proof}

Hazewinkel's construction of an $\ind$-scheme representing $W_{\rat}$ on Fatou domains relies on the theory of Hankel determinants and on a theorem of Kronecker. For any ring $A$ and $f = 1 + a_1 T + a_2 T^2 + \ldots$ in $W (A)$ let $H (f) = (a_{i+j})_{i,j \ge 0}$ with $a_0 = 1$ be the corresponding Hankel matrix. The rank of $H (f)$ is defined to be finite if all $(r+1) \times (r+1)$-subminors of $H (f)$ vanish for some $r \ge 1$. In this case all higher minors vanish as well and $\rk H (f)$ is defined to be the smallest such $r$. Otherwiser one sets $\rk H (f) = \infty$. If $A = K$ is a field, it is known that $\rk H (f) = r$ is equivalent to $D_{r-1} \neq 0$ and $D_n = 0$ for $n \ge r$ where $D_n = \det (a_{i+j})_{0 \le i , j \le n}$. Moreover, in this case $\rk H (f)$ is the maximal number of $K$-linearly independent rows (or columns) of $H (f)$. 

\begin{theorem}[Kronecker]
\label{t1.7}
For a field $K$ and $f \in W (K)$ the following assertions are equivalent\\
a) $\rk H (f) = r < \infty$\\
b) $f = P / Q$ with coprime polynomials $P , Q \in K [T]$ such that $r = \max (1 + \deg P , \deg Q)$. 
\end{theorem}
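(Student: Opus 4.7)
The plan is to establish the equivalence via two one-sided statements and the uniqueness of the coprime normalized representation of a rational function. Specifically I will show (i) that \emph{any} representation $f = P/Q$ with $Q(0) = 1$ satisfies $\rk H(f) \le \max(1 + \deg P, \deg Q)$, and (ii) that whenever $\rk H(f) = r < \infty$ there exists a representation $f = P/Q$ with $Q(0) = 1$ and $\max(1 + \deg P, \deg Q) \le r$. Combined, these two bounds force equality on the (unique) coprime normalized representation, giving both directions.

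For (i), write $Q = 1 + q_1 T + \cdots + q_d T^d$ with $d = \deg Q$, and set $s = \max(1 + \deg P, \deg Q)$. Comparing coefficients of $T^n$ in the equation $Qf = P$ yields the linear recurrence
\[
a_n \;=\; -\sum_{j=1}^{d} q_j\, a_{n-j} \qquad \text{for all } n \ge s.
\]
Writing $R_i = (a_i, a_{i+1}, a_{i+2}, \ldots)$ for the $i$-th row of $H(f)$, this recurrence applied coordinate-by-coordinate yields the row identity $R_n = -\sum_{j=1}^{d} q_j R_{n-j}$ for every $n \ge s$. Starting from $n = s$ and iterating, an easy induction (using the Hankel shift structure) places every $R_n$ with $n \ge s$ in $\mathrm{span}_K(R_0, \ldots, R_{s-1})$, so $\rk H(f) \le s$.

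For (ii), suppose $\rk H(f) = r$. I first verify that $R_0, \ldots, R_{r-1}$ are linearly independent. Otherwise there would be a least $m < r$ with $R_m \in \mathrm{span}_K(R_0, \ldots, R_{m-1})$; writing out the resulting dependence and applying the same Hankel shift argument as in (i) would force every $R_n$ into this $m$-dimensional span, contradicting $\rk H(f) = r$. Hence we may uniquely write $R_r = \sum_{j=0}^{r-1} c_j R_j$. Setting $Q(T) = 1 - \sum_{j=0}^{r-1} c_j T^{r-j}$ (so $\deg Q \le r$ and $Q(0) = 1$), the coefficient computation of (i) now run in reverse shows $(Qf)_n = 0$ for every $n \ge r$. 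Thus $P := Qf$ is a polynomial of degree $\le r - 1$, and $f = P/Q$ with $\max(1 + \deg P, \deg Q) \le r$.

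To conclude, reduce the pair $(P, Q)$ constructed in (ii) to coprime normalized form $(P', Q')$ with $Q'(0) = 1$; this forces $P'(0) = f(0) = 1$ and can only decrease degrees, so $\max(1 + \deg P', \deg Q') \le r$. Applying (i) to $(P', Q')$ gives $r = \rk H(f) \le \max(1 + \deg P', \deg Q')$, so equality holds, proving (a) $\Rightarrow$ (b). Conversely, uniqueness of the coprime representation over $K$ (normalized by $Q(0) = 1$) implies that any pair satisfying (b) coincides with $(P', Q')$, so $\max(1 + \deg P, \deg Q) = r = \rk H(f)$, giving (b) $\Rightarrow$ (a). The main technical point is the independence of $R_0, \ldots, R_{r-1}$ in (ii): finite rank alone produces only \emph{some} set of $r$ independent rows, and it is the Hankel shift structure that lets one use the initial ones, which is precisely what guarantees the degree bounds $\le r$ on both $P$ and $Q$.
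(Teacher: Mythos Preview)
Your proof is correct and is the standard classical argument for Kronecker's rationality criterion. The paper does not give its own proof of this statement; it simply cites external references (Kostenko's lecture notes, Theorem 1.2.1, and Bakan--Berg, section 2), remarking that the argument works over any field. So there is nothing to compare against beyond noting that you have supplied a self-contained proof where the paper defers to the literature.

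Two minor remarks. First, in step (ii) you write ``Hence we may uniquely write $R_r = \sum_{j=0}^{r-1} c_j R_j$''; the existence of such an expression uses that over a field $\rk H(f)$ coincides with the maximal number of $K$-linearly independent rows (so $R_0,\ldots,R_r$, being $r+1$ rows, must be dependent). The paper records this fact just before stating the theorem, so you may cite it. Second, your shift argument in (ii) for propagating a dependence $R_m = \sum_{j<m} c_j R_j$ forward is exactly the observation that replacing $k$ by $k+1$ in the entrywise identity $a_{m+k} = \sum_j c_j a_{j+k}$ yields $R_{m+1} = \sum_{j<m} c_j R_{j+1}$; you might make this one line explicit, since it is the heart of why the \emph{initial} block of rows suffices and hence why the degree bounds come out as $\deg P \le r-1$, $\deg Q \le r$ rather than something weaker.
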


This is \cite{Ko} Theorem 1.2.1 whose proof works for all fields. See also \cite{BB} section 2. The following fact is well known.

\begin{cor}
\label{t1.8}
1) For a field extension $L \supset K$ we have
\[
W_{\rat} (L) \cap W (K) = W_{\rat} (K) \quad \text{in} \; W (L) \; .
\]
2) For an extension of domains $B \supset A$ with $A$ Fatou e.g. Noetherian we have
\[
W_{\rat} (B) \cap W (A) = W_{\rat} (A) \; .
\]
\end{cor}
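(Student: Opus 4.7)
The plan is to use Kronecker's theorem (Theorem \ref{t1.7}) for part 1) and then reduce part 2) to part 1) via the Fatou property of $A$.

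For part 1), the inclusion $W_{\rat}(K) \subset W_{\rat}(L) \cap W(K)$ is clear from the definitions. For the reverse inclusion, take $f \in W_{\rat}(L) \cap W(K)$ and consider its Hankel matrix $H(f)$. Since $f \in W(K)$, all entries of $H(f)$ lie in $K$. Kronecker's theorem characterizes the rank of $H(f)$ via the vanishing of its $(r+1) \times (r+1)$ subminors; each such subminor is a polynomial in the entries of $H(f)$, hence an element of $K$, and its vanishing is independent of whether it is considered in $K$ or in $L$. Since $f \in W_{\rat}(L)$, Theorem \ref{t1.7} applied over $L$ gives $\rk H(f) = r < \infty$. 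The vanishing/non-vanishing of the relevant minors is unchanged when viewed over $K$, so $\rk H(f) = r$ also when computed over $K$. The reverse direction of Theorem \ref{t1.7}, applied over $K$, then furnishes coprime $P, Q \in K[T]$ with $f = P/Q$, so $f \in W_{\rat}(K)$.

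For part 2), let $K = \Frac A$ and $L = \Frac B$, so that $K \subset L$. The chain of inclusions $W_{\rat}(B) \subset W_{\rat}(L)$ and $W(A) \subset W(K)$ shows that
\[
W_{\rat}(B) \cap W(A) \subset W_{\rat}(L) \cap W(K) = W_{\rat}(K)
\]
by part 1). Hence any $f \in W_{\rat}(B) \cap W(A)$ lies in $W_{\rat}(K) \cap W(A)$, which by the Fatou property of $A$ (Theorem \ref{t1.3}, equivalence of b) and c)) equals $W_{\rat}(A)$. The converse inclusion $W_{\rat}(A) \subset W_{\rat}(B) \cap W(A)$ is trivial since $W_{\rat}$ respects monomorphisms. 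The assumption that $A$ is Noetherian is covered by the Fatou hypothesis because every Noetherian domain is Fatou, as already noted after Theorem \ref{t1.3}.

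The only non-routine point is the field-independence of the Hankel rank in part 1), which is why Kronecker's theorem is the essential tool: without it, there is no obvious way to extract from a rational presentation over $L$ a rational presentation over $K$. Once part 1) is established, part 2) is a short diagram chase combining the field case with the defining Fatou equality $W_{\rat}(A) = W_{\rat}(K) \cap W(A)$.
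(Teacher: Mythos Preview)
Your proof is correct and follows essentially the same route as the paper's: Kronecker's theorem plus field-independence of Hankel rank for part 1), then the Fatou identity $W_{\rat}(A) = W_{\rat}(K) \cap W(A)$ combined with part 1) for part 2). The only cosmetic addition in the paper is the remark that from $f = P/Q$ with coprime $P,Q \in K[T]$ one may normalize to $P(0)=Q(0)=1$ since $f(0)=1$, which you left implicit.
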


\begin{proof}
1) For $f \in W_{\rat} (L) \cap W (K)$ the Hankel matrix $H (f)$ has entries in $K$ and by Kronecker's theorem finite rank over $L$, hence over $K$ as well. Applying Theorem \ref{t1.7} again it follows that $f = P / Q$ with $P, Q \in K [T]$ where we may assume that $P (0) = Q (0) = 1$ since $f (0) = 1$.\\
2) Let $L \supset K$ be the quotient fields of $B$ and $A$. Using the Fatou property of $A$ we have
\begin{align*}
W_{\rat} (B) \cap W (A) & \subset W_{\rat} (L) \cap W (A) \\
& = W_{\rat} (L) \cap W (K) \cap W (A) \overset{1)}{=} W_{\rat} (K) \cap W (A) = W_{\rat} (A) \; .
\end{align*}
The other inclusion is clear.
\end{proof}

Here is a result on a Fatou type property where it is not assumed that the rings are domains. It is due to Schoutens \cite{Sch} 4. Corollary. 

\begin{theorem}
\label{t1.9}
For a faithfully flat extension $B \supset A$ of Noetherian rings we have
\[
W_{\rat} (A) = W_{\rat} (B) \cap W (A) \; .
\]
\end{theorem}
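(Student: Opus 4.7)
The inclusion $W_{\rat}(A) \subseteq W_{\rat}(B) \cap W(A)$ is immediate by functoriality. For the reverse inclusion, let $f = 1 + \sum_{i \ge 1} a_i T^i$ with $a_i \in A$ belong to $W_{\rat}(B) \cap W(A)$. Writing $f = P/Q$ over $B$ with $Q(T) = 1 - c_1 T - \cdots - c_n T^n \in B[T]$ and $N := \deg P$, the identity $Qf = P$ yields the linear recurrence
\[
a_k = c_1 a_{k-1} + c_2 a_{k-2} + \cdots + c_n a_{k-n} \quad \text{for all } k > N.
\]
Conversely, existence of such a recurrence with coefficients in $A$ is equivalent to $f \in W_{\rat}(A)$. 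So the task is to descend this recurrence from $B$ to $A$.

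I would reformulate this as linear algebra in $A^{\N}$. Set $v_j := (a_{k-j})_{k > N} \in A^{\N}$ for $j = 1, \ldots, n$ and $w := (a_k)_{k > N} \in A^{\N}$, and let $V := \sum_{j=1}^n A v_j$ be the finitely generated $A$-submodule they span. The hypothesis reads $w \in V^B$, where $V^B$ is the $B$-span of the $v_j$ in $B^{\N}$, and we want $w \in V$. Put $L := V + A w$; then $L/V \cong A/I$ with $I := \{a \in A : a w \in V\}$, and $w \in V$ iff $I = A$. By faithful flatness of $A \to B$, this is equivalent to $IB = B$. The hypothesis immediately gives $1 \in I^B := \{b \in B : b w \in V^B\}$, so the whole problem reduces to proving the equality $IB = I^B$.

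To establish $IB = I^B$, consider the $A$-linear map $\phi\colon A^{n+1} \to A^{\N}$, $(x_0, \ldots, x_n) \mapsto -x_0 w + \sum_j x_j v_j$, whose kernel $R \subseteq A^{n+1}$ projects via the first coordinate onto $I$; correspondingly $\ker(\phi_B\colon B^{n+1} \to B^{\N})$ projects onto $I^B$. Flatness of $B$ yields $\ker(\phi \otimes_A B) = R \otimes_A B$ inside $B^{n+1} = A^{n+1} \otimes_A B$, so the equality $\ker(\phi_B) = R \otimes_A B$ reduces to injectivity of the natural evaluation map $A^{\N} \otimes_A B \to B^{\N}$. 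The latter holds because $A$ is Noetherian (hence coherent): by Chase's theorem $A^{\N}$ is a flat $A$-module, and a standard consequence is the injectivity $A^{\N} \otimes_A B \hookrightarrow B^{\N}$ for any flat $A$-module $B$. Projecting $\ker(\phi_B) = R \otimes_A B$ onto the first coordinate then gives $I^B = IB$, and faithful flatness concludes $1 \in IB \Rightarrow 1 \in I$, whence $w \in V$ and $f \in W_{\rat}(A)$.

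The main obstacle is precisely this injectivity of $A^{\N} \otimes_A B \to B^{\N}$, which is where the Noetherian hypothesis on $A$ is decisive — flatness of $B$ alone does not control the interaction of tensoring with an infinite product. Once this ingredient is in hand, the rest of the proof is a clean application of faithful flat descent via the cyclic quotient $L/V = A/I$.
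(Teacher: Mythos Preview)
The paper does not give its own proof of this theorem; it simply attributes the result to Schoutens \cite{Sch} (``Recursive sequences and faithfully flat extensions''). Your approach --- translate membership in $W_{\rat}$ into the existence of a linear recurrence for the coefficient sequence, then descend the solvability of the resulting linear system from $B$ to $A$ via faithful flatness --- is exactly the strategy of Schoutens' paper, so in substance you have reconstructed the cited argument.

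One point deserves tightening. You reduce everything to the injectivity of $A^{\N}\otimes_A B \to B^{\N}$ and justify this by ``Chase's theorem: $A^{\N}$ is flat, and a standard consequence is the injectivity for any flat $B$''. Flatness of $A^{\N}$ is not what does the work here, and the implication you cite is not standard (indeed, Chase characterizes \emph{coherent} rings, whereas the step genuinely needs $A$ Noetherian). The clean argument is: the map $\phi\colon A^{n+1}\to A^{\N}$ is given by an $\N\times(n{+}1)$ matrix whose \emph{rows} span a submodule $S\subset A^{n+1}$; since $A$ is Noetherian, $S$ is finitely generated, so finitely many rows already span $S$. Hence $\ker\phi$ and $\ker\phi_B$ are computed by the \emph{same finite} subsystem, and for a map between finite free modules flatness of $B$ gives $\ker\phi_B = (\ker\phi)\otimes_A B = R\otimes_A B$. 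Projecting to the first coordinate yields $I^B = IB$, and faithful flatness finishes as you wrote. This is also how Schoutens argues: reduce the infinite recurrence to finitely many equations using Noetherianity, then apply faithfully flat descent for linear systems.
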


The next Corollary will be important later.

\begin{cor}
\label{t1.10}
1) For a field extension $L \supset K$ we have
\begin{equation} \label{eq:5}
W_{\rat} (K) = \Equ (W_{\rat} (L) \rightrightarrows W_{\rat} (L \otimes_K L)) \; .
\end{equation}
2) Let $A \subset B$ be an extension of integral domains with quotient fields $K \subset L$. Assume that $A = B \cap K$ and that $A$ is Fatou or that $B$ is normal or that $B$ is integral over $A$. Then we have
\begin{equation} \label{eq:6}
W_{\rat} (A) = \Equ (W_{\rat} (B) \rightrightarrows W_{\rat} (B \otimes_A B)) \; .
\end{equation}
3) If $A \subset B$ is a faithfully flat extension of Noetherian rings then \eqref{eq:6} holds as well.
\end{cor}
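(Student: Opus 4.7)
The plan is to derive all three claims from faithfully flat descent on power series coefficients, combined with the intersection formulas already established (Corollary \ref{t1.8} and Theorem \ref{t1.9}).

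For 1), $L/K$ is faithfully flat, so classical descent gives $K = \Equ(L \rightrightarrows L \otimes_K L)$ as rings. For any $f = 1 + \sum_{i \ge 1} c_i T^i \in W_{\rat}(L)$ satisfying the equalizer condition, equating power series coefficients in $L \otimes_K L$ forces $c_i \otimes 1 = 1 \otimes c_i$ and hence $c_i \in K$ for all $i$. Then $f \in W_{\rat}(L) \cap W(K) = W_{\rat}(K)$ by Corollary \ref{t1.8} 1).

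For 2), I transport the equality of images along the canonical ring homomorphism $B \otimes_A B \to L \otimes_K L$, $b \otimes b' \mapsto b \otimes b'$ (well-defined since $A \to K$), and apply 1) to conclude that $f$ viewed in $W_{\rat}(L)$ lies in $W_{\rat}(K)$. The coefficients of $f$ then lie in $K \cap B = A$ by hypothesis, giving $f \in W(A) \cap W_{\rat}(B)$. Each of the three alternative hypotheses finishes the proof differently. When $A$ is Fatou the claim is immediate from Corollary \ref{t1.8} 2). When $B$ is normal, I write the coprime representation $f = P/Q$ in $K[T]$ with $P(0) = Q(0) = 1$ together with any representation $f = P_B/Q_B$ in $B[T]$, obtaining $P_B = PR$ and $Q_B = QR$ in $L[T]$ for some $R$ with $R(0) = 1$. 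Passing to the reverse polynomials $P^*(T) := T^{\deg P} P(1/T)$, etc., which are monic thanks to the normalization $P(0) = Q(0) = R(0) = 1$, the relation $(P_B)^* = P^* R^*$ is a monic factorization in $L[T]$ of an element of $B[T]$, so Gauss's lemma for the integrally closed $B \subset L$ gives $P^*, Q^*, R^* \in B[T]$. Reversing back, $P, Q \in B[T] \cap K[T] = A[T]$. When $B$ is integral over $A$, the same reverse polynomial factorization exhibits the roots of $P^*$ in $\bar L$ as roots of the monic $(P_B)^* \in B[T]$, so they are integral over $B$ and by transitivity over $A$. The coefficients of $P$ (resp.\ $Q$), being elementary symmetric functions of these roots, thus lie in $K$ and are integral over $A$, and the hypothesis $A = B \cap K$ pins them into $A$.

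For 3), faithful flatness of $A \to B$ gives $A = \Equ(B \rightrightarrows B \otimes_A B)$ directly, so the coefficients of any $f$ in the equalizer of $W_{\rat}(B) \rightrightarrows W_{\rat}(B \otimes_A B)$ lie in $A$; that is, $f \in W(A) \cap W_{\rat}(B)$, and Theorem \ref{t1.9} upgrades this to $f \in W_{\rat}(A)$.

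I expect the main technical obstacle to be the $B$-normal step in 2): Gauss's lemma applies only to monic factorizations, so the reverse polynomial trick has to be interposed to convert the constant-term-$1$ factorization $P_B = PR$ into the monic factorization $(P_B)^* = P^* R^*$. The $B$-integral case is structurally parallel but additionally requires the remark that ``integral over $A$ and in $K$'' combined with $A = B \cap K$ forces the coefficients back into $A$.
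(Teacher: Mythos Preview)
Your arguments for 1), 3), and the first two sub-cases of 2) (``$A$ Fatou'' and ``$B$ normal'') are correct and essentially match the paper's, modulo cosmetic differences: the paper phrases the coefficient descent as the $fpqc$ sheaf property of $W$, and for the normal case it speaks of roots being integral over $B$ rather than invoking Gauss's lemma, but the content is identical.

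The gap is in the ``$B$ integral over $A$'' sub-case. You correctly deduce that the coefficients of the coprime $P,Q \in K[T]$ are in $K$ and integral over $A$. But the sentence ``the hypothesis $A = B \cap K$ pins them into $A$'' does not follow: $A = B \cap K$ would help only if you knew the coefficients lie in $B$, and ``integral over $A$'' does not place them in $B$ (nor in $A$, since $A$ is not assumed integrally closed in $K$). Concretely, take $A = \Z[\sqrt 5]$, $K = \Q(\sqrt 5)$, $B = \Z[\sqrt 5,\sqrt 2]$, $L = \Q(\sqrt 5,\sqrt 2)$: then $B$ is integral over $A$ and $B \cap K = A$, yet $(1+\sqrt 5)/2 \in K$ is integral over $A$ without lying in $A$ or in $B$. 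So the implication you use fails in general.

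The paper circumvents this by \emph{not} trying to force the coprime $P,Q$ into $A[T]$. Instead, knowing the roots of $Q^*$ are integral over $A$, it multiplies $Q^*$ by an auxiliary monic $G \in K[T]$ (built from the minimal polynomials over $A$ of those roots) so that $GQ^* \in A[T]$; reversing gives $Q_1 := G^* Q \in A[T]$ with $Q_1(0)=1$. Writing $f = P_1/Q_1$ with $P_1 \in K[T]$, the crucial extra input $f \in W(A)$ then yields $P_1 = f Q_1 \in W(A) \cap K[T] = A[T]$, so $f \in W_{\rat}(A)$. Your argument is missing precisely this bootstrap via $f \in W(A)$; the coprime representation alone is not enough when $A$ is not normal.
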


\begin{proof}
1) The morphism $\spec L \to \spec K$ is an fpqc covering and $W$ satisfies the fpqc sheaf property. Hence we have 
\[
W (K) = \Equ (W (L) \rightrightarrows W (L \otimes_K L)) \; .
\]
Hence the right hand side of \eqref{eq:5} is equal to 
\[
W_{\rat} (L) \cap \Equ (W (L) \rightrightarrows W (L \otimes_K L)) = W_{\rat} (L) \cap W (K) \; .
\]
By Corollary \ref{t1.8} this is equal to $W_{\rat} (K)$.\\
2) The inclusion $\subset$ is clear. Using 1), we see that the right hand side of \eqref{eq:6} is contained in
\begin{gather} 
W_{\rat} (B) \cap \Equ (W_{\rat} (L) \rightrightarrows W_{\rat} (L \otimes_K L)) =  W_{\rat} (B) \cap W_{\rat} (K) \notag \\
\subset W (B) \cap W_{\rat} (K) = W (A) \cap W_{\rat} (K) \; . \label{eq:7}
\end{gather}
If $A$ is Fatou, this is contained in $W_{\rat} (K)$ and we are done. For the remaining cases, consider any $f \in W_{\rat} (B) \cap W_{\rat} (K)$ and let $P , Q \in K [T]$ be the unique coprime polynomials with $f = P / Q$ and $P (0) = 1 = Q (0)$. By assumption $f = \tP / \tQ$ with $\tP , \tQ \in B [T] , \tP (0) = 1 = \tQ (0)$. Since $P, Q$ are coprime in $L [T]$ as well, there is a polynomial $H \in L [T]$ with $\tP = HP , \tQ = HQ$ and in particular $H (0) = 1$. For a polynomial $G (T)$ with $G (0) = 1$ consider the normalized polynomial $G^* (T) = T^{\deg G} G (T^{-1})$. The zeroes of $\tQ^*$ are integral over $B$ and since $\tQ^* = H^* Q^*$, the zeroes of $Q^*$ are integral over $B$ as well. If $B$ is normal it follows that $Q^* \in B [T] \cap K [T] = A [T]$ and hence $Q \in A [T]$. The same argument shows that $P \in A [T]$ and therefore $f \in W_{\rat} (A)$. In the case where $B$ is integral over $A$ it follows that the zeroes of $Q^*$ are integral over $A$. By \cite{CC} Lemma 2.6 there is a normalized polynomial $G \in K [T]$ with $GQ^* \in A [T]$ and hence $G^* Q \in A [T]$ where $G^* (0) = 1$. Namely we may write $Q^* = \prod_i (T - \alpha_i)$ where each $\alpha_i$ is a zero of a normalized polynomial $G_i \in A [T]$ and take $G = \prod_i G_i (T) / (T - \alpha_i)$. Hence we may write $f = P_1 / Q_1$ where $P_1 \in K [T] , Q_1 \in A [T]$ with $P_1 (0) = 1 = Q_1 (0)$. Since $f \in W (A)$ by \eqref{eq:7} we find that $P_1 = fQ_1 \in W (A)$ and hence $P_1 \in A [T]$ which implies that $f \in W_{\rat} (A)$. \\
3) The right hand side of \eqref{eq:6} is contained in
\[
W_{\rat} (B) \cap \Equ (W (B) \rightrightarrows W (B \otimes_A B)) = W_{\rat} (B) \cap W (A) \; .
\]
Here we have used that $W$ satisfies the fpqc sheaf condition. Using Theorem \ref{t1.9} we conclude.
\end{proof}
\section{Hazewinkel's $\ind$-scheme $W_J$ is an $\ind$-ring scheme} \label{sec:2}

In \cite{H} Hazewinkel defined a topology on the ring $\Z [X] = \Z [X_1 , X_2 , \ldots ]$ given by a chain of ideals $J_1 \supset J_2 \supset \ldots$ and showed that for any Fatou ring $A$ equipped with the discrete topology we have
\[
\Hom_{\cont} (\Z [X] , A) = W_{\rat} (A) \; .
\]
In this section we recall Hazewinkel's construction and show that the corresponding $\ind$-scheme $W_J = \colim \, \spec (\Z [X] / J_i)$ is an $\ind$ sub-{\it ring} scheme of $W$ invariant under Frobenius and Verschiebung.

We define $\ind$-schemes as in \cite{R}. Let $\Affsch$ be the category of affine schemes. We identify $\Affsch^{\op}$ with Rings, the category of (unital, commutative) rings. Via the Yoneda embedding, the category $\Sch$ of schemes is a full subcategory of the category $\Ph$ of presheaves on $\Affsch$ i.e. of set valued functors on $\Affsch^{\op} =$ Rings. An object $X$ of $\Ph$ is a (strict) $\ind$-scheme if it is isomorphic to a filtered colimit in $\Ph$ of schemes $X_i$ for $i \in I$ where all transition maps $X_i \to X_j$ for $i \le j$ are closed immersions, $X \silo \colim X_i$. Thus for any affine scheme $S$ we have functorial isomorphisms $X (S) \silo \colim_i X_i (S)$. The full subcategory in $\Ph$ of (strict) $\ind$-schemes is called $\Indsch$. Colimits in $\Ph$ exist and are computed pointwise. $\Ind$-schemes satisfy the sheaf condition for the fpqc topology on $\Affsch$, i.e. for all fpqc coverings $\{ S_i \to S \}$ where $S_i$ and $S$ are affine, the following sequence is exact
\begin{equation}
\label{eq:8}
X (S) \longrightarrow \prod_i X (S_i) \rightrightarrows \prod_{i,j} X (S_i \times_S S_j) \; .
\end{equation}
In fact this holds without assuming that the schemes $S$ and $S_i$ are affine if we set $X (Y) = \Mor_{\Indsch} (Y,X)$ for a scheme $Y$, c.f. \cite{R} Lemma 1.4. The category of $\ind$-schemes has fibre products extending those for schemes. If $X = \colim_i X_i$ and $Y = \colim_j Y_j$, the fibre products $X_i \times_S Y_j$ in $\Ph$ are schemes and the transition maps are closed immersions. We have
\begin{equation}
\label{eq:9}
X \times_S Y = \colim_{i,j} X_i \times_S Y_j \quad \text{in} \; \Indsch \; .
\end{equation}
$\Indsch$ has finite products. In particular $X \times Y = X \times_S Y$ for $S = \spec \Z$. 
See \cite{R} Lemma 1.10 for details. Let $(P)$ be a property of morphisms of schemes. A morphism $F \to G$ in $\Ph$ is called representable if for all morphisms $Z \to G$ where $Z$ is an affine scheme, the fibre product $F \times_G Z$ is a scheme. The morphism $F \to G$ has property $(P)$ if it is representable and if all base change morphisms $F \times_G Z \to Z$ have property $(P)$. 

For $A$ in Rings consider the ring $W (A) = 1 + TA [[T]]$ equipped with its Witt addition (= usual multiplication) and its Witt multiplication with unit $1-T$. Moreover $W (A)$ is equipped with the usual Frobenius and Verschiebung endomorphisms $F_N$ and $V_N$ for $N \ge 1$. The ring functor $W$ is represented by $\Z [X] = \Z [X_1 , X_2 , \ldots]$ via the isomorphism $W (A) = \Hom (\Z [X] , A)$ sending $f = 1  + a_1 T + a_2 T^2 + \ldots$ to $\phi_f : \Z [X] \to A$ defined by $\phi_f (X_i) = a_i$. In the other direction $\phi : \Z [X] \to A$ is sent to $f_{\phi} = 1 + \phi (X_1) T + \phi (X_2) T^2 + \ldots$ in $W (A)$. In fact, this is the representation of $W$ viewed as the universal $\lambda$-ring $\Lambda$, but in the notation we do not make that distinction. The element $\xi = 1 + X_1 T + X_2 T^2 + \ldots$ in $W (\Z [X])$ corresponds to $\id_{\Z [X]}$ via $W (\Z [X]) = \Hom (\Z [X] , \Z [X])$. Consider its Hankel matrix $H (\xi) = (X_{i+j})_{i,j \ge 0}$ where $X_0 := 1$. For $n \ge 1$ let $J_n$ be the ideal in $\Z [X]$ generated by all the $(n+1) \times (n+1)$-minors of $H (\xi)$. For $n \ge 0$ set $I_n = (X_{n+1} , X_{n+2} , \ldots)$. Then we have $I_0 \supset I_1 \supset \ldots$ and by the Laplace expansion theorem also $J_1 \supset J_2 \supset \ldots$ and $J_n \subset I_{n-1}$ for all $n \ge 1$. Hazewinkel \cite{H} proves that the ideals $J_n$ are prime and using the existence of integral domains that are not Fatou deduces that the domains $\Z [X] / J_n$ are not Fatou for $n \gg 0$ and in particular not Noetherian. For such $n$ it follows that $\xi_n \in W (\Z [X] / J_n)$ where $\xi_n = 1 + \oX_1 T + \oX_2 T^2 + \ldots$ with $\oX_i = X \mod J_n$ is not in $W_{\rat} (\Z [X] / J_n)$. Note that via $W (\Z [X] / J_n) = \Hom (\Z [X] , \Z [X] / J_n)$ the element $\xi_n$ is the natural projection $\Z [X] \to \Z [X] / J_n$. The ideals $J_n$ define the $J$-topology on $\Z [X]$. A ring homomorphism $\phi : \Z [X] \to A$ to a (discrete) ring $A$ is $J$-continuous if and only if $\phi (J_n) = 0$ for some $n \ge 1$. This means that all $(n+1) \times (n+1)$-minors of $H (\phi (\xi)) = (\phi (X_{i+j}))_{i,j \ge 0}$ vanish.

\textbf{Notation} For a ring $A$ we set $W_J (A) = \{ f \in W (A) \mid \rk H (f) < \infty \}$ and $W^{\le n}_J (A) = \{ f \in W (A) \mid \rk H (f) \le n \}$ for $n \ge 1$ and $W^{< n}_J (A) := W^{\le n-1}_J (A)$ for $n \ge 2$. 

A priori these are subsets of $W (A)$. We will see later that $W_J (A)$ is a \textit{subring} of $W (A)$ for all rings $A$. Clearly, 
\[
W_J (A) = \bigcup_{n \ge 1} W^{\le n}_J (A) \quad \text{and} \quad W^{\le n}_J (A) \subset W^{\le n+1}_J (A) \quad \text{for} \; n \ge 1 \; .
\]
The bijections $W (A) = \Hom (\Z [X] , A)$ restrict to natural bijections
\begin{equation}
\label{eq:10}
W_J (A) = \Hom_{J-\text{cont}} (\Z [X] , A) \quad \text{and} \quad W^{\le n}_J (A) = \Hom (\Z [X] / J_n , A) \; .
\end{equation}
In the language of $\ind$-schemes, this can be rephrased as follows. Consider the integral affine schemes $W^{\le n}_J = \spec \Z [X] / J_n$ and define the $\ind$-scheme $W_J$ by
\[
W_J = \colim_n W^{\le n}_J \quad \text{in} \; \Indsch \; .
\]
Then the $A$-valued points of the schemes $W^{\le n}_J$ resp. the $\ind$-scheme $W_J$ are the sets $W^{\le n}_J (A)$ resp. $W_J (A)$ defined above. We have natural inclusions $W_J (A) \subset W (A)$ and the corresponding morphism $W_J \hookrightarrow W$ in $\Indsch$ is not representable since $W_J$ is not a scheme.

\begin{prop}
\label{t2.1} 
a) For all rings $A$ we have $W_{\rat} (A) \subset W_J (A) \subset W (A)$. \\
b) For Fatou domains $A$ it holds that $W_{\rat} (A) = W_J (A)$.\\
c) Let $A \to A'$ be a map of rings whose kernel $\Nh$ satisfies $\Nh^k = 0$ for some $k \ge 1$. Consider the natural map $W (A) \to W (A') , a \mapsto a'$. If $a' \in W^{< n}_J (A')$ for some $a \in W (A)$, then $a \in W^{< nk}_J (A)$. In particular, the following diagram is cartesian
\[
\xymatrix{
W_J (A) \ar[r]\ar@{^{(}->}[d] & W_J (A') \ar@{^{(}->}[d] \\
W (A) \ar[r] & W (A')
}
\]
d) The $\ind$-scheme $W_J$ is formally smooth over $\spec \Z$. The (non-representable) morphism $W_J \hookrightarrow W$ of presheaves on $\Affsch$ is formally \'etale in the sense that it satisfies the unique lifting property for infinitesimal affine thickenings.\\
e) Let $A$ be a ring whose nilradical $\Nh$ satisfies $\Nh^k = 0$ for some $k \ge 1$, e.g. a Noetherian ring. Then we have a cartesian diagram
\[
\xymatrix{
W_J (A)  \ar[r] \ar@{^{(}->}[d] & W_J (A_{\red}) \ar@{^{(}->}[d]\\
W (A) \ar[r] & W (A_{\red}) \; .
}
\]
\end{prop}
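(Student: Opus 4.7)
The plan is to treat the five parts separately: (a) and (b) follow by combining Kronecker's theorem (Theorem \ref{t1.7}) with a universal specialization argument and with Theorem \ref{t1.3}\,c) respectively; (c) is the technical heart and rests on a Laplace-expansion identity for minors; (d) and (e) then follow formally from (c).

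For (a), given $f = P/Q \in W_{\rat}(A)$ with $\deg P = s$, $\deg Q = t$ and $P(0) = Q(0) = 1$, I would introduce the universal coefficient ring $R = \Z[p_1, \ldots, p_s, q_1, \ldots, q_t]$ and the generic polynomials $\tilde P = 1 + \sum_i p_i T^i$, $\tilde Q = 1 + \sum_j q_j T^j$. The resultant of $\tilde P$ and $\tilde Q$ is a nonzero polynomial in the $p_i, q_j$, so $\tilde P$ and $\tilde Q$ are coprime in $K[T]$ where $K = \Frac(R)$; Theorem \ref{t1.7} then gives $\rk H(\tilde P/\tilde Q) = \max(s+1, t)$ in $W(K)$, so all larger minors of $H(\tilde P/\tilde Q)$ vanish in $K$ and hence in $R \subset K$. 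Specializing along the evident ring map $R \to A$ transfers the vanishing to $H(f)$, proving $f \in W_J(A)$. For (b), conversely, if $f \in W_J(A)$ with $A$ a Fatou domain, then $\rk H(f) < \infty$ also over $K = \Frac(A)$, so Theorem \ref{t1.7} gives $f \in W_{\rat}(K)$; combined with $f \in W(A)$, Theorem \ref{t1.3}\,c) yields $f \in W_{\rat}(A)$.

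The crux is part (c), which I would reduce to the following linear-algebra lemma: if all $n \times n$ minors of a matrix $M$ over a commutative ring lie in an ideal $I$, then all $(nk) \times (nk)$ minors of $M$ lie in $I^k$. One proves this by induction on $k$ via Laplace expansion along a block of exactly $n$ rows, which writes an $(nk) \times (nk)$ minor as a signed sum of products of an $n \times n$ minor (in $I$) and an $((k-1)n) \times ((k-1)n)$ minor (in $I^{k-1}$ by induction); the product lies in $I^k$. Applied to $H(a)$ with $I = \Nh$ and $\Nh^k = 0$, the hypothesis $\rk H(a') < n$, i.e.\ that all $n \times n$ minors of $H(a)$ lie in $\Nh$, becomes $\rk H(a) < nk$, giving $a \in W^{<nk}_J(A)$. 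The cartesian square is then immediate, since $W_J \to W$ is a monomorphism on points.

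Parts (d) and (e) are formal consequences. The scheme $W = \spec \Z[X_1, X_2, \ldots]$ is formally smooth over $\Z$ because any polynomial ring is. Part (c) is exactly the existence half of the unique lifting property for $W_J \hookrightarrow W$ along infinitesimal (i.e.\ nilpotent) affine thickenings; uniqueness is automatic since $W_J \to W$ is a monomorphism. This is formal étaleness of $W_J \hookrightarrow W$; combining it with formal smoothness of $W$ (lift $Z_0 \to W_J \hookrightarrow W$ to $Z \to W$ via $W$, then factor back through $W_J$) yields formal smoothness of $W_J$ over $\Z$. Finally, (e) is part (c) applied to $A \twoheadrightarrow A_{\red}$ with $\Nh$ the nilradical, whose $k$-th power vanishes by hypothesis. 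The main obstacle I expect is the Laplace-expansion lemma in (c): one must be careful that the expansion is always performed with respect to a block of exactly $n$ rows so that the inductive step genuinely produces an additional factor of $I$ rather than merely preserving membership in $I$; once this is established, the remaining parts follow routinely from the results of Section \ref{sec:1}.
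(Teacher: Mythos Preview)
Your proposal is correct and follows essentially the same route as the paper: reduce (a) to Kronecker's theorem over the fraction field of a finitely generated polynomial ring; deduce (b) from Kronecker together with the Fatou characterization of Theorem~\ref{t1.3}\,c); prove (c) via a determinant identity expressing large minors as sums of products of smaller ones; and read off (d), (e) formally from (c). The only differences are cosmetic. For (a), the paper chooses a surjection $\Z[T_1,\ldots,T_m]\twoheadrightarrow A_0$ onto the subring generated by the coefficients and argues over that polynomial ring as a domain, whereas you work directly in the universal coefficient ring and invoke the nonvanishing of the resultant to secure coprimality; both amount to the same specialization idea. For (c), the paper packages your Laplace-expansion lemma as an exterior-algebra identity (Lemma~\ref{t2.2}): writing $\Lambda^{nk}\varphi$ as a wedge of $\Lambda^{n}\varphi$'s yields in one stroke that every $nk\times nk$ determinant is a $\Z$-linear combination of $k$-fold products of $n\times n$ subminors, while you obtain the same conclusion by iterating the classical Laplace expansion along a block of $n$ rows. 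Your inductive version is perhaps more elementary; the paper's version avoids the induction and gives the identity $J_{nk-1}\subset J_{n-1}^k$ directly.
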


\begin{proof}
a) This is already mentioned in \cite{H} Theorem 4.5. For $f \in W_{\rat} (A) ,f = P / Q$ with $P, Q \in A [T] , P (0) = 1 = Q (0)$, let $A_0$ be the ring generated by the coefficients of $P$ and $Q$. Choose a surjection from a polynomial ring $B = \Z [T_1 , \ldots , T_n] \to A_0$ and lift $f$ to an element $f_B \in W_{\rat} (B)$. Since $B$ is a domain, it follows from Kronecker's theorem \ref{t1.7} that $f_B \in W_J (B)$. The image of $f_B$ under the map $W_J (B) \to W_J (A)$ is $f$ and hence $f \in W_J (A)$.\\
b) This is a result of \cite{H}. For a domain $A$ with quotient field $K$ we have $W_J (A) = W (A) \cap W_J (K)$ because the vanishing of Hankel determinants in $A$ can be checked in $K$. Kroneckers theorem \ref{t1.7} implies that $W_J (K) = W_{\rat} (K)$, and since $A$ is Fatou it follows that $W_J (A) = W (A) \cap W_{\rat} (K) = W_{\rat} (A)$. \\
The proof of c) is an immediate consequence of the following Lemma. Assertions d) and e) follow from c). 
\end{proof}

\begin{lemma}
\label{t2.2}
For the ideals $J_n$ of $\Z  [X]$ defined above, we have the inclusions
\[
J_{nk-1} \subset J^k_{n-1} \quad \text{for} \; n \ge 2 , k \ge 1 \; .
\]
\end{lemma}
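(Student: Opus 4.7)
The plan is to reformulate the statement in terms of determinantal ideals and then reduce it to a general matrix-theoretic fact via iterated Laplace expansion. Writing $I_r = I_r(H(\xi))$ for the ideal of $r\times r$ minors of the Hankel matrix $H(\xi) = (X_{i+j})_{i,j\ge 0}$, the defining property of $J_m$ says $J_m = I_{m+1}$. Hence the claimed inclusion $J_{nk-1}\subset J_{n-1}^{k}$ is equivalent to
\[
I_{nk} \subseteq I_{n}^{k}.
\]
I would prove the stronger purely matrix-theoretic statement: for \emph{any} matrix $M$ over a commutative ring and any $n\ge 1$, $k\ge 1$, one has $I_{nk}(M)\subseteq I_{n}(M)^{k}$. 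Specializing to $M=H(\xi)$ then gives the lemma.

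The proof of this matrix statement would proceed by induction on $k$, the case $k=1$ being trivial. For the inductive step, fix an $nk\times nk$ submatrix $M'$ of $M$ and apply generalized Laplace expansion along its first $n$ rows:
\[
\det(M') \;=\; \sum_{S}\,\varepsilon_{S}\,\det\bigl(M'_{[n],S}\bigr)\,\det\bigl(M'_{[nk]\setminus[n],\,[nk]\setminus S}\bigr),
\]
where $S$ runs over $n$-element subsets of $\{1,\dots,nk\}$ and $\varepsilon_S\in\{\pm1\}$. Each first factor $\det(M'_{[n],S})$ is an $n\times n$ minor of $M$, hence lies in $I_n(M)$. Each second factor is an $n(k-1)\times n(k-1)$ minor of $M$ (a submatrix of a submatrix is a submatrix of $M$), hence by the inductive hypothesis lies in $I_n(M)^{k-1}$. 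Therefore each summand lies in $I_n(M)\cdot I_n(M)^{k-1}=I_n(M)^{k}$, and so does $\det(M')$. Since the $nk\times nk$ minors generate $I_{nk}(M)$, the induction is complete.

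Applying this with $M=H(\xi)$ yields
\[
J_{nk-1} \;=\; I_{nk}(H(\xi)) \;\subseteq\; I_{n}(H(\xi))^{k} \;=\; J_{n-1}^{k},
\]
which is the desired inclusion. There is no serious obstacle here; the only point that requires mild care is the bookkeeping ensuring that the complementary minors appearing in Laplace expansion are genuine minors of the ambient matrix $H(\xi)$, and the verification that the combinatorics of the block sizes matches, namely $nk-n=n(k-1)$, which makes the induction close exactly.
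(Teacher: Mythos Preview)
Your proof is correct and follows essentially the same approach as the paper: both reduce to the purely matrix-theoretic fact that $I_{nk}(M)\subseteq I_n(M)^k$ for any matrix $M$, proved via Laplace-type expansion. The only cosmetic difference is that the paper establishes this in one step, using exterior powers to write an $nk\times nk$ determinant directly as a signed sum of $k$-fold products of $n\times n$ minors (a $k$-block Laplace expansion), whereas you peel off one $n\times n$ block at a time and induct on $k$; the underlying identity is the same.
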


\begin{proof}
Consider a matrix $(a_{ij})$ in $M_n (A)$ for some ring $A$ and let $e_1 , \ldots , e_n$ be the canonical basis of $A^n$. Viewing $(a_{ij})$ as an endomorphism $\varphi$ of $A^n$ we have $\varphi (e_j) = \sum_i a_{ij} e_i$. For a subset $J \subset \{ 1 , \ldots , n \}$ of order $r$ set $e_J = e_{j_1} \wedge \ldots \wedge e_{j_r}$ where $j_1 < \ldots < j_r$ are the elements of $J$. Then we have
\[
(\Lambda^r \varphi) (e_J) = \sum^n_{i_1 , \ldots , i_r = 1} a_{i_1 j_1} \cdots a_{i_r j_r} e_{i_1} \wedge \ldots \wedge e_{i_r} = \sum_{|I| = r} a_{IJ} e_I \; .
\]
Here $I$ runs over the subsets of $\{ 1 , \ldots , n \}$ and $a_{IJ}$ is the minor $a_{IJ} = \det (a_{ij})_{i \in I , j \in J}$. Write $n = n_1 + \ldots + n_k$ with $k \ge 1$ and $n_{\nu} \ge 1$. Fix a partition $\{ 1 , \ldots , n \} = J_1 \dcup \ldots \dcup J_k$ with $|J_{\nu}| = n_{\nu}$. Then we have
\begin{align*}
(\Lambda^n \varphi) (e_{\{ 1 , \ldots , n \}} ) & = \pm (\Lambda^{n_1} \varphi) (e_{J_1}) \wedge \ldots \wedge (\Lambda^{n_k} \varphi) (e_{J_k}) \\
& = \pm \sum_{|I_1| = n_1} a_{I_1 J_1} e_{I_1} \wedge \ldots \wedge \sum_{|I_k| = n_k} a_{I_k J_k} e_{I_k} \\
& = \Big( \sum_{\{ 1 , \ldots , n \} = I_1 \dcup \ldots \dcup I_k \atop |I_{\nu}| = n_{\nu}} \pm a_{I_1 J_1} \cdots a_{I_k J_k} \Big) e_{\{ 1 , \ldots , n \} } \; .
\end{align*}
Comparing with the formula $(\Lambda^n \varphi) (e_{ \{ 1 , \ldots , n \} }) = \det (a_{ij}) e_{ \{ 1, \ldots , n \} }$ we find that
\[
\det (a_{ij})  = \sum_{\{ 1 , \ldots , n \} \in I_1 \dcup \ldots \dcup I_k \atop |I_{\nu}| = n_{\nu}} \pm a_{I_1 J_1} \cdots a_{I_k J_k} \; .
\]
It follows that every $nk \times nk$-determinant is a $\Z$-linear combination of $k$-fold products of $n \times n$-subminors. The Lemma follows. 
\end{proof}

For Fatou domains $A$ we have $W_J (A) = W_{\rat} (A)$. Since the domains $\Z [X] / J_n$ are not Fatou for $n \gg 0$ we cannot simply use the universal property to show that $W_J$ has a ring structure. This is one reason for the following considerations.

For any ring $A$ and subset ${\scriptstyle\sum} \subset \spec A$ the canonical maps $A \to A / \ep \to \kappa (\ep) = \Quot (A / \ep)$ induce a ring map
\begin{equation}
\label{eq:11}
W (A) \longrightarrow \prod_{\ep \in \sum} W (\kappa (\ep)) \; , \; f \longmapsto (f_{\ep}) \; .
\end{equation}
Restriction gives maps of sets
\begin{equation}
\label{eq:12}
W^{\le n}_J (A) \longrightarrow \prod_{\ep \in \sum} W^{\le n}_J (\kappa (\ep))
\end{equation}
and
\begin{equation}
\label{eq:13}
W_J (A) \longrightarrow \prod_{\ep \in \sum}\rdp \; W_J (\kappa (\ep)) := \bigcup_{n \ge 1} \prod_{\ep \in \sum} W^{\le n}_J (\kappa (\ep)) \; .
\end{equation}
For a field $K$ and $n \ge 1$ we set
\[
W^{\le n}_{\rat} (K) = \left\{ \begin{array}{rl} f \in W_{\rat} (K)\; \mid & f = P / Q \; \text{for} \; P, Q \in K [T] \; \text{with} \; P (0) = 1 = Q (0) \\
& \text{and} \; \max (1 + \deg P , \deg Q) \le n
\end{array} \right\} \; .
\]
By Kronecker's Theorem \ref{t1.7} we have
\begin{equation}
\label{eq:14}
W^{\le n}_J (K) = W^{\le n}_{\rat} (K) \quad \text{and} \quad W_J (K) = W_{\rat} (K) \; .
\end{equation}
It follows from the definitions, that in $W_{\rat} (K)$ we have
\begin{equation}
\label{eq:15}
W^{\le n}_{\rat} (K) \overset{+}{\cdot} W^{\le m}_{\rat} (K) \subset W^{\le n+m}_{\rat} (K) \quad \text{and} \quad -W^{\le n}_{\rat} (K) \subset W^{\le n+1}_{\rat} (K) \; .
\end{equation}
Moreover
\begin{equation}
\label{eq:16}
F_N (W^{\le n}_{\rat} (K)) \subset W^{\le n}_{\rat} (K) \quad \text{and} \quad V_N (W^{\le n}_{\rat} (K)) \subset W^{\le Nn}_{\rat} (K) \; .
\end{equation}
This follows from the formulas in $W_{\rat} (\oK)$
\begin{equation}
\label{eq:17}
F_N (f) (T^N) = \prod_{\zeta^N = 1} f (\zeta T) \quad \text{and} \quad V_N (f) (T) = f (T^N) \; .
\end{equation}
Setting
\[
\prod_{\ep \in \sum}\rdp W_{\rat} (\kappa (\ep)) := \bigcup_{n \ge 1} \prod_{\ep \in \sum} W^{\le n}_{\rat} (\kappa (\ep))
\]
we therefore see that
\begin{equation}
\label{eq:18}
\prod_{\ep \in \sum}\rdp W_J (\kappa (\ep)) = \prod_{\ep \in \sum}\rdp W_{\rat} (\kappa (\ep)) 
\end{equation}
is a subring of $\prod_{\ep \in \sum} W (\kappa (\ep))$ which is $F_N$- and $V_N$-stable.

If $\sum$ contains all the minimal prime ideals of $A$ then the kernel of the map $A \to \prod_{\ep \in \sum} \kappa (\ep)$ is the nilradical of $A$, hence zero if $A$ is reduced. In this case, the map $W (A) \to \prod_{\ep \in \sum} W (\kappa (\ep))$ is injective and we get the following descriptions of the $A$-valued points of $W^{\le n}_J$ and $W_J$ as subsets of $\prod_{\ep \in \sum} W (\kappa (\ep))$.

\begin{prop}
\label{t2.3}
Let $A$ be a reduced ring. For every subset $\sum$ of $\spec A$ containing the minimal prime ideals of $A$, we have\\
a) $W^{\le n}_J (A) = W (A)\cap \prod_{\ep \in \sum} W^{\le n}_J (\kappa (\ep))$.\\
b) $W_J (A) = W (A) \cap \prod\rdp\!\!\!_{\ep \in \sum} W_J (\kappa (\ep))$.\\
In particular $W_J (A)$ is a subring of $W (A)$.\\
c) $W^{\le n}_J (A) \overset{+}{\cdot} W^{\le m}_J (A) \subset W^{n+m}_J (A)$ and $-W^{\le n}_J (A) \subset W^{\le n+1}_J (A)$. \\
Moreover we have:\\
d) $F_N (W^{\le n}_J (A)) \subset W^{\le n}_J (A)$ and $V_N (W^{\le n}_J (A)) \subset W^{\le Nn}_J (A)$.\\
In particular $W_J (A)$ is $F_N$- and $V_N$-invariant.
\end{prop}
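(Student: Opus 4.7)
The plan is to reduce every claim to the field case, where Kronecker's theorem \ref{t1.7} together with \eqref{eq:14}--\eqref{eq:17} gives a complete description of the pieces $W^{\le n}_J(K) = W^{\le n}_{\rat}(K)$ and their behaviour under $+,\,\cdot,\,-,\,F_N$ and $V_N$. The key point is that because $A$ is reduced and $\sum$ contains every minimal prime of $A$, the canonical map $A \hookrightarrow \prod_{\ep \in \sum} \kappa(\ep)$ is injective; and since $W$, being defined coefficientwise by $W(B) = 1 + TB[[T]]$, preserves injections, the map \eqref{eq:11} is injective as well. Any vanishing statement about elements of $A$ --- in particular the vanishing of the $(n+1)\times(n+1)$ minors of a Hankel matrix $H(f)$ --- can therefore be tested componentwise in the residue fields.

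This gives a) immediately: $f \in W^{\le n}_J(A)$ iff all $(n+1)\times(n+1)$ subminors of $H(f)$ vanish in $A$, iff the corresponding minors of $H(f_\ep)$ vanish in $\kappa(\ep)$ for every $\ep \in \sum$, iff $f_\ep \in W^{\le n}_J(\kappa(\ep))$ for each such $\ep$. Statement b) follows by passing to the union over $n \ge 1$, since $W_J(A) = \bigcup_n W^{\le n}_J(A)$ by definition and the restricted product on the right is by definition $\bigcup_n \prod_{\ep \in \sum} W^{\le n}_J(\kappa(\ep))$.

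For c) and d) I would combine a) with the field-level results already recorded. By \eqref{eq:14} we have $W^{\le n}_J(\kappa(\ep)) = W^{\le n}_{\rat}(\kappa(\ep))$, and \eqref{eq:15}, \eqref{eq:16} then say that in each residue field the degree-bounded pieces are stable under $+, \cdot, -, F_N, V_N$ with exactly the rank bounds asserted. Given $f \in W^{\le n}_J(A)$ and $g \in W^{\le m}_J(A)$, the elements $f + g$, $f \cdot g$, $-f$, $F_N(f)$ and $V_N(f)$ lie a priori in $W(A)$ and, componentwise at each $\ep$, in the appropriate $W^{\le \ast}_J(\kappa(\ep))$; part a) read in reverse lifts these componentwise bounds back to $A$. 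The subring statement in b) then follows: $W_J(A)$ contains the additive and multiplicative identities of $W(A)$ (namely $1$ and $1 - T$, both of Hankel rank at most $2$) and is closed under $+, \cdot, -$ by c); the $F_N$- and $V_N$-invariance of $W_J(A)$ is the same argument.

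The only genuine step is the initial passage to the residue fields via the injection into $\prod_\ep W(\kappa(\ep))$; I do not anticipate a serious obstacle, as every subsequent claim reduces to routine componentwise bookkeeping anchored in Kronecker's theorem.
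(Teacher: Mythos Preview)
Your proposal is correct and follows essentially the same route as the paper: prove a) by testing the vanishing of Hankel minors in each residue field via the injection $A \hookrightarrow \prod_{\ep} \kappa(\ep)$, deduce b) by taking the union over $n$, and then obtain c) and d) from a) together with the field-case statements \eqref{eq:14}--\eqref{eq:16}. The paper's proof is slightly terser but structurally identical; your added remark that the additive and multiplicative units lie in $W^{\le 2}_J$ is a harmless elaboration.
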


\begin{proof}
For $f \in W (A)$ with $f_{\ep} \in W^{\le n}_J (\kappa (\ep))$ for all $\ep \in \sum$, all $(n+ 1) \times (n+1)$-subminors of $H (f)$ vanish $\mod \ep$ for all $\ep \in \sum$. Hence they vanish in $A$, so that $f \in W^{\le n}_J (A)$. This implies a) from which b) follows by taking unions. Using a), assertion c) follows from \eqref{eq:14} and \eqref{eq:15}. Similarly, assertion d) follows from \eqref{eq:14} and \eqref{eq:16} or in the case of $V_N$ directly from the definition of $W^{\le n}_J (A)$ via Hankel matrices.
\end{proof}

\begin{cor}
\label{t2.4}
Let $A$ be a reduced ring and let $\{ \ep_i \}$ be the set of minimal ideals of $A$ and $\Quot (A)$ its total ring of fractions. Then we have\\
a) $W_J (A) = W (A) \cap \prod_i\rdp W_{\rat} (\kappa (\ep_i))$ in $\prod_i W (\kappa (\ep_i))$.\\
b) $W_J (A) = W (A) \cap W_{\rat} (\Quot (A))$ in $W (\Quot (A))$, if $A$ has only finitely many minimal ideals, e.g. if $A$ is Noetherian.
\end{cor}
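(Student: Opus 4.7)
The plan is to deduce both statements directly from Proposition \ref{t2.3} b) applied with $\Sigma = \{\ep_i\}$, together with Kronecker's identification $W_J(K) = W_{\rat}(K)$ for fields given in \eqref{eq:14}.

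For part a), since each $\kappa(\ep_i)$ is a field, \eqref{eq:14} (equivalently \eqref{eq:18}) lets us replace $W_J(\kappa(\ep_i))$ by $W_{\rat}(\kappa(\ep_i))$ inside the restricted product, and the formula of Proposition \ref{t2.3} b) becomes exactly the assertion of a).

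For part b), the additional input I would use is that for a reduced ring $A$ with only finitely many minimal primes $\ep_1, \ldots, \ep_r$, the total ring of fractions is the finite product $\Quot(A) = \prod_i \kappa(\ep_i)$. I would then verify that both $W$ and $W_{\rat}$ commute with finite products: this is immediate for $W$ since $1 + T(A_1 \times A_2)[[T]] = (1+TA_1[[T]]) \times (1+TA_2[[T]])$, and inherited by $W_{\rat}$ since rationality of $(f_1,f_2) \in W(K_1) \times W(K_2)$ is equivalent to rationality of each component (write numerators and denominators coordinatewise). Consequently $W_{\rat}(\Quot(A)) = \prod_i W_{\rat}(\kappa(\ep_i))$, and because the index set is finite the restricted product collapses: any $(f_i) \in \prod_i W_{\rat}(\kappa(\ep_i))$ has each $f_i$ in $W^{\le n_i}_{\rat}(\kappa(\ep_i))$, so taking $n = \max_i n_i$ shows $(f_i) \in \prod_i W^{\le n}_{\rat}(\kappa(\ep_i))$. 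Thus $\prod_i\rdp W_{\rat}(\kappa(\ep_i)) = W_{\rat}(\Quot(A))$ sits naturally inside $W(\Quot(A)) = \prod_i W(\kappa(\ep_i))$, and part a) yields part b).

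There is essentially no serious obstacle: the content is a repackaging of Proposition \ref{t2.3} b). The only minor point requiring care is the passage from the restricted product to an ordinary product, which is where the finiteness hypothesis in b) is used; in the general (infinite) case of a) one cannot drop the prime on the product.
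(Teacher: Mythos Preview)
Your proof is correct and follows essentially the same route as the paper: apply Proposition \ref{t2.3} b) with $\Sigma = \{\ep_i\}$ together with \eqref{eq:14} for part a), then use $\Quot(A) = \prod_i \kappa(\ep_i)$ and the collapse of the restricted product to the ordinary product when the index set is finite for part b). The paper states these steps more tersely, but the argument is the same.
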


\begin{proof}
For a) take $\sum = \{ \ep_i \}$ in Proposition \ref{t2.3}, b) and use \eqref{eq:14}. If $\{ \ep_i \}$ is finite, then $\Quot (A) = \prod_i \kappa (\ep_i)$ and therefore $W_{\rat} (\Quot (A)) = \prod_i W_{\rat} (\kappa (\ep_i)) = \prod_i\rdp W_{\rat} (\kappa (\ep_i))$. Now b) follows from a).
\end{proof}

%\begin{definition}
%We call a reduced ring Fatou if $W_J (A) = W_{\rat} (A)$. If $A$ has finitely many minimal prime ideals, this means that
%\[
%W_{\rat} (A) = W (A) \cap W_{\rat} (\Quot (A)) \quad \text{in} \; W (\Quot (A)) \; .
%\]
%\end{definition}
%
%For integral domains $A$ with quotient field $K$ one recovers the usual Fatou condition $W_{\rat} (A) = W (A) \cap W_{\rat} (K)$. The known fact that Noetherian domains are Fatou has the following generalization:
%
%\begin{prop}
%\label{t2.5}
%Reduced Noetherian rings are Fatou.
%\end{prop}
%
%\begin{proof}
%This follows from Schouten's result Theorem 1.9 since the map $A \to \Quot (A)$ is faithfully flat.
%\end{proof}

We now prove that $W_J$ is a ring functor on all commutative rings and not only on the reduced ones as follows from Proposition \ref{t2.3}. Set $R = \Z [X] , R_n = R / J_n$ for $n \ge 1$ and
\[
J_{n,m} = \Ker (R \otimes R \longrightarrow R_n \otimes R_m) \quad \text{where} \; \otimes = \otimes_{\Z} \; .
\]
We do not know if the $J_{n,m}$ are prime ideals. Let $\btu^+ , \btu^{\cdot} : R \to R \otimes R$ be the coaddition and comultiplication on $R$ coming from the functorial ring structure in $A$ on $W (A) = \Hom (R, A) , f \mapsto \phi_f$. Note that the universal polynomials $\btu^+ (X_i)$ and $\btu^{\cdot} (X_i)$ are not the usual ones describing addition and multiplication in the big Witt vector ring $W (A)$, (we have written $W (A)$ for what should more precisely be written $\Lambda (A)$). For example we have the simple formula $\btu^+ (X_i) = \sum_{i+j = n} X_i \otimes X_j$ for $n \ge 1$, where $X_0 := 1$. Let $M : R \to R$ be the antipode for addition and let $\varepsilon_0 , \varepsilon_1 : R \to \Z$ be the cozero and counit. Explicitly $\varepsilon_0 (X_i) = 0$ for $i \ge 1$ and $\varepsilon_1 (X_1) = -1 , \varepsilon_1 (X_i) = 0$ for $i \ge 2$ since we chose $1-T$ as the unit in $W (A)$. Finally let $F_N (\xi) , V_N (\xi) : R \to R$ be the ring maps representing Frobenius and Verschiebung on $W$. Here $\xi = 1 + X_1 t + X_2 t^2 + \ldots$ was the element of $W (R) = \Hom (R,R)$ corresponding to the identity map. With these notations we have the following result.

\begin{lemma}
\label{t2.6}
The ring homomorphisms $\btu^+ , \btu^{\cdot} , M , \varepsilon_0 , \varepsilon_1 , F_N (\xi) , V_N (\xi)$ have the following properties $\btu^+ (J_{n+m}) \subset J_{n,m} , \btu^{\cdot} (J_{n+m}) \subset J_{n,m} , M (J_{n+1}) \subset J_n , \varepsilon_0 (J_n) = 0 , F_n (\xi) (J_n) \subset J_n$ and $V_N (\xi) (J_{nN}) \subset J_n$ for all $n \ge 1$ and $\varepsilon_1 (J_n) = 0$ for $n \ge 2$.
\end{lemma}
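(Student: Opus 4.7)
The plan is to convert each ideal inclusion into an equivalent rank statement for universal Witt vectors via the identifications $W^{\le k}_J(A) = \Hom(R_k, A)$ and $J_{n,m} = J_n \otimes R + R \otimes J_m$ in $R \otimes R$. For example, I would interpret $\btu^+(J_{n+m}) \subset J_{n,m}$ as: for every ring $A$ and every $f \in W^{\le n}_J(A)$, $g \in W^{\le m}_J(A)$, the Witt sum $f + g$ lies in $W^{\le n+m}_J(A)$, and treat the other six inclusions analogously for Witt multiplication, the additive antipode, Frobenius, Verschiebung and the two counits.

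The claims for $\varepsilon_0$ and $\varepsilon_1$ I would settle by direct inspection: substituting $X_i = 0$ for all $i \ge 1$ (respectively $X_1 = -1$ and $X_i = 0$ for $i \ge 2$) turns the Hankel matrix $(X_{i+j})$ into a matrix of rank $1$ (respectively $2$), so all sufficiently large minors vanish. For $M$, $F_N$, and $V_N$ the universal ambient ring is $R_n$ (or $R_{nN}$), which is a domain by Hazewinkel. I would pass to its quotient field $K_n$ and invoke the identification $W^{\le n}_J(K_n) = W^{\le n}_{\rat}(K_n)$ from \eqref{eq:14}; the desired bounds are then immediate from \eqref{eq:16}, \eqref{eq:17} and the observation that $f = P/Q$ in normalized Kronecker form gives $-f = Q/P$ satisfying $\max(1+\deg Q, \deg P) \le n+1$.

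For the coaddition $\btu^+$ and the comultiplication $\btu^{\cdot}$, the universal ambient ring is $A = R_n \otimes R_m$, and the crucial technical point is that this ring is reduced. I would argue: each $R_n$ is a $\Z$-flat domain (domain by Hazewinkel, flat because the $(n+1)\times(n+1)$ minors of $(X_{i+j})$ generating $J_n$ involve only the $X_i$ with $i \ge 1$, so $J_n \cap \Z = 0$), and writing $K_n = \Quot(R_n)$, flatness yields an injection
\[
R_n \otimes_\Z R_m \;\hookrightarrow\; K_n \otimes_\Z K_m \;=\; K_n \otimes_\Q K_m,
\]
whose target is reduced because $\Q$ is perfect. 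Hence $R_n \otimes R_m$ is reduced.

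Once reducedness is in place, Proposition \ref{t2.3} reduces the required rank bounds on the Witt sum and Witt product of the universal images $\xi_1, \xi_2 \in W(R_n \otimes R_m)$ to the same bounds at the residue fields $\kappa(\ep)$ of the minimal primes of $R_n \otimes R_m$. At each such field, Kronecker's Theorem \ref{t1.7} gives Kronecker representations $P_i/Q_i$ of $\xi_{i,\ep}$ of the required degrees; the bound for the sum is then immediate from $(P_1/Q_1)(P_2/Q_2) = P_1 P_2 / Q_1 Q_2$, while the bound for the product comes from the splitting-principle formula $\xi_{1,\ep} \odot \xi_{2,\ep} = \prod_{i,j}(1 - \alpha_i \beta_j T)$ over $\overline{\kappa(\ep)}$. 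I expect the main obstacle to be the reducedness of $R_n \otimes R_m$; every other step reduces to a straightforward Kronecker-type computation at a field.
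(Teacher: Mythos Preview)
Your approach is essentially the paper's: translate the ideal inclusions into rank bounds via $W^{\le k}_J(A)=\Hom(R_k,A)$, establish that $R_n\otimes R_m$ is reduced (your argument is exactly Lemma~\ref{t2.7}), and then verify the rank bounds by reducing to residue fields and invoking Kronecker---which is precisely what the paper does by citing Proposition~\ref{t2.3}\,c),\,d) (whose proof is that very reduction). The only cosmetic difference is that for $M,F_N,V_N$ you pass directly to $K_n=\Quot(R_n)$ rather than quoting Proposition~\ref{t2.3} for the reduced ring $R_n$, which amounts to the same thing since $R_n$ is a domain.
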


\begin{rem}
We suspect that all the $J$-indices in the Lemma are optimal. For $V_N , \varepsilon_0 , \varepsilon_1$ one can check the claims in the Lemma by inspecting the minors of Hankel matrices. For the other ones we will use Proposition \ref{t2.3} and the following fact:
\end{rem}

\begin{lemma}
\label{t2.7}
The rings $R_n \otimes R_m$ are reduced.
\end{lemma}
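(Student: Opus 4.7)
The plan is to realise $R_n \otimes R_m$ as a subring of a reduced $\Q$-algebra, namely of $K_n \otimes_{\Q} K_m$ where $K_n = \Frac R_n$ and $K_m = \Frac R_m$, and to conclude from the classical fact that the tensor product of two field extensions of a perfect base is reduced. The only non-formal input required is Hazewinkel's theorem, already quoted in the excerpt, that $J_n \subset \Z[X]$ is a prime ideal.

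The first step is to verify that $R_n$ is $\Z$-flat. Primality of $J_n$ makes $R_n$ an integral domain, and the injection $\Z \hookrightarrow R_n$ will follow from $J_n \cap \Z = 0$. To see the latter, evaluate the Hankel matrix $H(\xi) = (X_{i+j})_{i,j \ge 0}$ (with $X_0 = 1$) at $X_1 = X_2 = \cdots = 0$: the resulting matrix has rank at most one, so for $n \ge 1$ every $(n+1) \times (n+1)$ minor of $H(\xi)$ lies in the augmentation ideal $(X_1, X_2, \ldots) \subset \Z[X]$, hence has no constant term. Therefore $J_n \cap \Z = 0$, and $R_n$ (and likewise $R_m$) is torsion-free, so flat, over the principal ideal domain $\Z$.

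Setting $K_n = \Frac R_n$, the flatness just established gives a chain of injections
\[
R_n \otimes_{\Z} R_m \hookrightarrow K_n \otimes_{\Z} R_m \hookrightarrow K_n \otimes_{\Z} K_m = K_n \otimes_{\Q} K_m.
\]
Since $\Q$ is perfect, every extension of $\Q$ is separable, and the tensor product of two separable field extensions of a common base is reduced. Thus $K_n \otimes_{\Q} K_m$ is reduced, and so is its subring $R_n \otimes R_m$.

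The argument is essentially formal once primality of $J_n$ is granted. The one point that genuinely requires attention, and is the main place where something could in principle go wrong, is pinning down the characteristic: the reducedness theorem for tensor products of fields needs the base to be perfect, which forces the preliminary verification that $\Z \hookrightarrow R_n$. Everything else is a routine application of flatness and of the standard field-theoretic fact.
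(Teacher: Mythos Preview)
Your proof is correct and follows essentially the same route as the paper: both arguments hinge on showing that $R_n$ has characteristic zero (you via evaluating the Hankel minors at $X_i = 0$, the paper via the inclusion $J_n \subset I_{n-1}$ already recorded earlier), then embed $R_n \otimes_{\Z} R_m$ into $K_n \otimes_{\Q} K_m$ by flatness and invoke reducedness of tensor products of fields over the perfect base $\Q$.
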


\begin{proof}
According to \cite{H} Appendix, $J_n$ is a prime ideal. We claim that the integral domain $R_n = \Z [X] / J_n$ has quotient field $K_n$ of characteristic zero. Otherwise we have $p = 0$ in $R_n$ for some prime number $p$, i.e. $p \in J_n$. But $J_n \subset I_{n-1} = (X_n , X_{n+1} , \ldots )$ in $R$ and $p \notin I_{n-1}$ since $R / I_{n-1} \cong \Z [X_1 , \ldots , X_{n-1}]$. As a $\Q$-vector space, $K_m$ is a flat $\Z$-module and hence the injection $R_n \hookrightarrow K_n$ gives an injection $R_n \otimes_{\Z} K_m \hookrightarrow K_n \otimes_{\Z} K_m$. Since $R_n$ is $\Z$-torsionfree hence flat we also get in injection $R_n \otimes_{\Z} R_m \hookrightarrow R_n \otimes_{\Z} K_m$. Thus $R_n \otimes_{\Z} R_m \subset K_n \otimes_{\Q} K_m$. The field $\Q$ being perfect, the algebra $K_n \otimes_{\Q} K_m$ is reduced (Bourbaki, Alg. V, \S\,13 or stacks, Lemma 10.44.3).
\end{proof}

\begin{proofof} \textbf{Lemma \ref{t2.6}}
Let $\xi \in W (R) = \Hom (R,R)$ be the element corresponding to the identity $\id_R$ and let $\alpha , \beta : R \to R \otimes R$ be given by $\alpha (r) = r \otimes 1$ and $\beta (r) = 1 \otimes r$. Viewing $\btu^+ , \btu^{\cdot} : R \to R \otimes R$ as elements of $W (R \otimes R) = \Hom (R , R \otimes R)$ we have the formulas $\btu^+ = W (\alpha) (\xi) + W (\beta) (\xi)$ and $\btu^{\cdot} = W (\alpha) (\xi) \cdot W (\beta) (\xi)$ in $W (R \otimes R)$. This follows immediately from the way addition and multiplication in $W (A)$ are described using $\btu^+$ and $\btu^{\cdot}$. E.g. for $f,g \in W (A)$ the sum $f+g$ corresponds to the composition
\[
f + g : R \xrightarrow{\btu^+} R \otimes R \xrightarrow{f \otimes g} A \otimes A \xrightarrow{\text{multipl.}} A \; .
\]
For $A = R \otimes R$, we take $f = W (\alpha) (\xi) = \alpha \verk \id_R = \alpha$ and $g = W (\beta) (\xi) = \beta$. The formula for $\btu^+$ follows since
\[
(\text{multipl.} \verk (\alpha \otimes \beta)) (r \otimes r') = \alpha (r) \beta (r') = (r \otimes 1) \cdot (1 \otimes r') = r \otimes r' \; .
\]
The antipode $M$ for $\btu^+$ is given by $M = - \xi$ in $W (R)$. Consider the element $\xi_n \in W (R_n)$ corresponding to the natural projection $\pr_n : R \to R_n$. Define $\alpha_n , \beta_m : R_{n+m} \to R_n \otimes R_m$ by $\alpha_n (r) = r \otimes 1$ and $\beta_m (r) = 1 \otimes r$. Set $\btu^+_{n,m} = \pr_{n,m} \verk \btu^+$ and $\btu^{\cdot}_{n,m} = \pr_{n,m} \verk \btu^{\cdot}$ where $\pr_{n,m} = \pr_n \otimes \pr_m : R \otimes R \to R_n \otimes R_m$, and let $M_n = \pr_n \verk M$. By functoriality, we get
\[
\btu^+_{n,m} = W (\alpha_n) (\xi_n) + W (\beta_m) (\xi_m) \quad \text{and} \quad \btu^{\cdot}_{n,m} = W (\alpha_n) (\xi_n) \cdot W (\beta_m) (\xi_m) \quad \text{in} \; W (R \otimes R) \; .
\]
Moreover $M_n = - \xi_n$ in $W (R_n)$ and $\pr_n \verk F_N (\xi) = F_N (\xi_n)$ and $\pr_n \verk V_N (\xi) = V_N (\xi_n)$. Crucially, the element $\xi_n$ is in $W_J (R_n)$, more precisely $\xi_n \in W^{\le n}_J (R_n)$ since $\pr_n$ factors over $R / J_n = R_n$. Since $R_n$ and $R_n \otimes R_m$ are both reduced and since $W^{\le n}_J$ is functorial, assertions c), d) of Proposition \ref{t2.3} imply that
\[
\btu^+_{n,m} , \btu^{\cdot}_{n,m} \in W^{\le n+m}_J (R_n \otimes R_m) \; , \; M_n \in W^{\le n+1}_J (R_n) \; , \; F_n (\xi_n) \in W^{\le n}_J (R_n) 
\]
and $V_N (\xi_n) \in W^{\le Nn}_J (R_n)$. Using the description $W^{\le n}_J (A) = \Hom (R_n , A)$ it follows that
\[
\pr_{n,m} \verk \btu^+ = \btu^+_{n,m} : R \longrightarrow R_n \otimes R_m
\]
factors over $R_{n+m}$. This means that $\btu^+ (J_{n+m}) \subset J_{n,m}$. The other assertions in Lemma \ref{t2.6} follow similarly, the ones for $\varepsilon_0 , \varepsilon_1$ and $V_N$ being also obvious from the definition of $J_n$ via Hankel minors.
\end{proofof}

The following result is a reformulation of Lemma \ref{t2.6}. 

\begin{theorem}
\label{t2.8}
The $\ind$-scheme $W_J$ is a subring in $\Indsch$ of $W$. It is equipped with Frobenius ring endomorphisms $F_N$ and additive Verschiebung endomorphisms $V_N$ which are compatible with those of $W$. More precisely, for the closed affine subschemes $W^{\le n}_J$ of $W_J$ we have factorizations
\[
\xymatrix{
W^{\le n}_J \times W^{\le m}_J \ar[d] \ar[r]^-{+ , \cdot} & W^{\le n+m}_J \ar[d] \\
W_J \times W_J \ar[r]^-{+ , \cdot} & W_J 
}
\qquad
\xymatrix{
W^{\le n}_J \ar[d] \ar[r]^-{-} & W^{\le n+1}_J \ar[d] \\
W_J \ar[r]^{-} & W_J
}
\]
and
\[
\xymatrix{
W^{\le n}_J \ar[d] \ar[r]^{F_N} & W^{\le n}_J \ar[d] \\
W_J \ar[r]^{F_N} & W_J
}
\qquad
\xymatrix{
W^{\le n}_J \ar[d] \ar[r]^{V_N} & W^{\le Nn}_J \ar[d] \\
W_J \ar[r]^{V_N} & W_J \; .
}
\]
\end{theorem}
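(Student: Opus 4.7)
The plan is to view Theorem \ref{t2.8} as the geometric reformulation of Lemma \ref{t2.6}: every ideal inclusion there corresponds, under $\Spec$, to one of the morphisms in the theorem, and passing to the filtered colimit produces the required ind-scheme morphisms. Since the ring scheme structure on $W = \Spec R$ is encoded by the ring homomorphisms $\btu^+, \btu^\cdot : R \to R \otimes R$, the antipode $M : R \to R$, the counits $\varepsilon_0, \varepsilon_1 : R \to \Z$, and the representing maps $F_N(\xi), V_N(\xi) : R \to R$, all of the structural content of the theorem is already present in Lemma \ref{t2.6} at the level of rings; no additional coherence argument is needed.

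First I would translate each ideal inclusion of Lemma \ref{t2.6} into a ring homomorphism between appropriate quotients of $R$. For example, $\btu^+(J_{n+m}) \subset J_{n,m}$ says precisely that $\pr_{n,m} \verk \btu^+ : R \to R_n \otimes R_m$ kills $J_{n+m}$, so it factors through $R_{n+m}$, yielding a ring homomorphism $R_{n+m} \to R_n \otimes R_m$. Applying $\Spec$ gives a morphism of affine schemes $W^{\le n}_J \times W^{\le m}_J \to W^{\le n+m}_J$ which on $A$-points is the addition inherited from $W(A)$. In the same way Lemma \ref{t2.6} furnishes the multiplication $W^{\le n}_J \times W^{\le m}_J \to W^{\le n+m}_J$, the antipode $W^{\le n}_J \to W^{\le n+1}_J$, the Frobenius $F_N : W^{\le n}_J \to W^{\le n}_J$, the Verschiebung $V_N : W^{\le n}_J \to W^{\le Nn}_J$, and the zero and unit sections $\Spec\Z \to W^{\le 1}_J$ respectively $\Spec\Z \to W^{\le 2}_J$.

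Second, these families are compatible with the transition closed immersions $W^{\le n}_J \hookrightarrow W^{\le n+1}_J$ induced by $J_{n+1} \subset J_n$, because the factorizations in the previous step all come from the same fixed maps $\btu^+, \btu^\cdot, M, F_N(\xi), V_N(\xi)$ on $R$. By \eqref{eq:9} the filtered colimits of the level-$n$ morphisms give morphisms in $\Indsch$,
\[
+, \cdot : W_J \times W_J \to W_J, \quad -: W_J \to W_J, \quad F_N, V_N : W_J \to W_J,
\]
and the commutative squares displayed in the theorem hold by construction, since each factored map $R_{n+m} \to R_n \otimes R_m$, $R_{n+1} \to R_n$, $R_n \to R_n$, $R_{Nn} \to R_n$ is, by definition, obtained by projecting the corresponding map on $R$. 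This same observation gives the required compatibility with the ring scheme structure on $W$.

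Third, I would verify the ring axioms. This is automatic: pointwise, $W_J(A) \hookrightarrow W(A)$ is injective and the operations on $W_J$ are restrictions of those on $W$, so $W_J \hookrightarrow W$ is a monomorphism in $\Ph$ and associativity, commutativity, distributivity, the identities for zero, unit and antipode, the ring-homomorphism property of $F_N$, and the additivity of $V_N$ are inherited from $W$. The main obstacle is therefore entirely upstream, in the proof of Lemma \ref{t2.6}, whose non-formal ingredient is Proposition \ref{t2.3} applied to the reduced rings $R_n$ and $R_n \otimes R_m$ of Lemma \ref{t2.7}; once those are in hand, Theorem \ref{t2.8} is a formal consequence of applying $\Spec$ and taking filtered colimits.
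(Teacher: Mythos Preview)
Your proposal is correct and coincides with the paper's approach: the paper simply states that Theorem \ref{t2.8} is a reformulation of Lemma \ref{t2.6}, and you have spelled out precisely why, translating each ideal inclusion into the corresponding morphism of affine schemes, passing to the filtered colimit via \eqref{eq:9}, and inheriting the ring axioms from the monomorphism $W_J \hookrightarrow W$.
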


\begin{cor}
\label{t2.9}
1) The diagram
\[
\xymatrix{
W_J \ar@{^{(}->}[r] \ar[d]_{V_N} & W \ar[d]^{V_N}\\
W_J \ar@{^{(}->}[r] & W
}
\]
in $\Indsch$ is cartesian, i.e. for all rings $A$ we have
\[
V_N (W_J (A)) = W_J (A) \cap V_N (W (A)) \; .
\]
2) For Fatou domains, e.g. for Noetherian domains we have
\[
V_N (W_{\rat} (A)) = W_{\rat} (A) \cap V_N (W (A)) \; .
\]
\end{cor}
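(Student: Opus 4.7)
My plan is to prove part (1) first, then deduce part (2) in one line via Proposition \ref{t2.1} b), which identifies $W_{\rat}(A) = W_J(A)$ on Fatou domains. For part (1), the inclusion $V_N(W_J(A)) \subset W_J(A) \cap V_N(W(A))$ is immediate from Theorem \ref{t2.8}, which in particular shows $V_N$ carries $W^{\le n}_J(A)$ into $W^{\le Nn}_J(A)$. The substance is the reverse inclusion: given $g \in W(A)$ with $V_N(g) \in W_J(A)$, I must conclude $g \in W_J(A)$. By \eqref{eq:17} we have $V_N(g)(T) = g(T^N)$, so $V_N$ is injective on $W(A)$ and $g$ is uniquely recovered from $V_N(g)$ by reading off its coefficients at exponents divisible by $N$; the real question is whether finiteness of $\rk H(V_N(g))$ forces finiteness of $\rk H(g)$.

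The key observation, and essentially the whole argument, is that $H(g)$ appears as a submatrix of $H(V_N(g))$. Write $g = 1 + \sum_{i \ge 1} a_i T^i$ (with $a_0 := 1$) and $V_N(g) = 1 + \sum_k b_k T^k$, so that $b_k = a_{k/N}$ when $N \mid k$ and $b_k = 0$ otherwise. Then the entry of $H(V_N(g))$ at row $Ni$ and column $Nj$ equals $b_{N(i+j)} = a_{i+j}$, i.e.\ the $(i,j)$ entry of $H(g)$. Consequently every $(r+1) \times (r+1)$ subminor of $H(g)$ at rows $i_0 < \ldots < i_r$ and columns $j_0 < \ldots < j_r$ coincides with the $(r+1) \times (r+1)$ subminor of $H(V_N(g))$ at the rescaled rows $Ni_0, \ldots, Ni_r$ and columns $Nj_0, \ldots, Nj_r$. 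Hence if $V_N(g) \in W^{\le r}_J(A)$, so that all $(r+1) \times (r+1)$ subminors of $H(V_N(g))$ vanish, the same holds for $H(g)$, and therefore $g \in W^{\le r}_J(A) \subset W_J(A)$.

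There is no real obstacle here once the submatrix structure is noticed; the argument is a combinatorial matching of indices combined with the injectivity of $V_N$ on $W(A)$, and it works uniformly over an arbitrary ring $A$ so no reduction to fields or to the reduced case is required. Part (2) is then immediate: for a Fatou domain $A$ (in particular any Noetherian domain), Proposition \ref{t2.1} b) gives $W_{\rat}(A) = W_J(A)$, and substituting into the equality from part (1) yields $V_N(W_{\rat}(A)) = W_{\rat}(A) \cap V_N(W(A))$.
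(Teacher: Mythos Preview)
Your proof is correct and is essentially the same as the paper's: both observe that $H(g)$ is the submatrix of $H(V_N(g))$ obtained by restricting to rows and columns indexed by $N\N_0$, so every minor of $H(g)$ occurs among the minors of $H(V_N(g))$, and then deduce part (2) from part (1) via Proposition~\ref{t2.1}~b). The only difference is notational (the paper writes $g = f(T^N)$ and shows $f \in W_J(A)$, while you write $V_N(g)$ and show $g \in W_J(A)$).
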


\begin{rem}
A different proof of 2) was given in \cite{D} Theorem 1.9 based on the continued fraction algorithm.
\end{rem}

\begin{proof}
2) is an immediate consequence of 1). As for 1) consider $g$ in $W_J (A) \cap V_N (W (A))$. There are $n \ge 1$ and $f = 1 + a_1 T + a_2 T^2 + \ldots$ in $W (A)$ such that $g = f (T^N) \in W^{\le n} (A)$. The Hankel matrix of $g$ is $H (g) = ( a_{\frac{i+j}{N}})_{i,j \ge 0}$ where $a_0 = 1$ and $a_r = 0$ for $r \in \Q \setminus \Z$. Let $I, J \subset \N_0$ be sets with $|I| = n+1 = |J|$. Then the $I \times J$-minor $\det ((a_{i+j})_{i \in I , j \in J})$ of $H (f) = (a_{i+j})$ is equal to the $NI \times NJ$-minor $\det ((a_{\frac{i+j}{N}})_{i \in NI , j \in NJ})$ of $H (g)$ and hence zero since $g \in W^{\le n}_J (A)$. Thus all $(n +1 ) \times (n+1)$-minors of $H (f)$ vanish and therefore $f \in W^{\le n}_J (A)$. The other inclusion is clear.
\end{proof}

The following considerations show that even for very simple non-reduced rings $A$ the ring $W_J (A)$ can be much bigger than $W_{\rat} (A)$. For a ring map $A \to A'$ with kernel $I$ we set $W_J (I) = \ker (W_J (A) \to W_J (A'))$ and similarly for $W^{\le n}_J (I)$ and $W (I)$. Note that $W (I) = 1 + TI [[T]]$. 

\begin{prop}
\label{t2.10}
1) Let $A \twoheadrightarrow A'$ be a surjective ring map with kernel $I$ such that $I^k = 0$ for some $k \ge 1$. Then we have $W (I) = W_J (I) = W^{\le k}_J (I)$ and the sequence
\begin{equation}
\label{eq:19}
0 \longrightarrow W (I) \longrightarrow W_J (A) \longrightarrow W_J (A') \longrightarrow 0 
\end{equation}
is exact.\\
2) Let $A$ be a ring whose nilradical $\Nh$ satisfies $\Nh^k = 0$ for some $k \ge 1$. Then $W_J (\Nh) = W (\Nh)$ and the sequence
\begin{equation}
\label{eq:20}
0 \longrightarrow W (\Nh) \longrightarrow W_J (A) \longrightarrow W_J (A_{\red}) \longrightarrow 0
\end{equation}
is exact.
\end{prop}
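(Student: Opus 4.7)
The plan is to first establish the coincidence $W(I) = W_J(I) = W^{\le k}_J(I)$ via a direct minor estimate on Hankel matrices, and then read off the exact sequence \eqref{eq:19} from the cartesian square of Proposition \ref{t2.1} c). Part 2) will fall out immediately as the specialization of part 1) to the surjection $A \twoheadrightarrow A_{\red}$ with kernel $\Nh$.

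For the first claim, pick $f = 1 + a_1 T + a_2 T^2 + \ldots \in W(I) = 1 + TI[[T]]$ and consider its Hankel matrix $H(f) = (a_{i+j})_{i,j \ge 0}$ with the usual convention $a_0 := 1$. Because $a_n \in I$ for every $n \ge 1$, the entry $a_0$ is the only one of $H(f)$ that need not lie in $I$. An arbitrary $(k+1) \times (k+1)$ minor with row set $I_0$ and column set $J_0$ expands by the Leibniz formula as $\sum_\sigma \pm \prod_{i \in I_0} a_{i+\sigma(i)}$ over bijections $\sigma : I_0 \to J_0$. The factor $a_{i+\sigma(i)}$ can equal $a_0$ only when $i = 0 = \sigma(i)$, so in each product at most one factor lies outside $I$, forcing the remaining $\ge k$ factors into $I$. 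Hence every product lies in $I^k = 0$, every $(k+1) \times (k+1)$ minor of $H(f)$ vanishes, and $f \in W^{\le k}_J(I)$. The reverse inclusions $W^{\le k}_J(I) \subset W_J(I) \subset W(I)$ are built into the definitions.

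For the exactness of \eqref{eq:19}, injectivity of $W(I) \hookrightarrow W_J(A)$ is automatic from the inclusion $W(I) \subset W(A)$. Exactness in the middle uses the standard identification $\Ker (W(A) \to W(A')) = 1 + TI[[T]] = W(I)$ intersected with the subgroup $W_J(A)$. For surjectivity $W_J(A) \twoheadrightarrow W_J(A')$, pick $f' \in W_J(A')$, lift it coefficientwise to some $f \in W(A)$ (possible since $A \twoheadrightarrow A'$), and Proposition \ref{t2.1} c) then upgrades $f$ to $W_J(A)$ because its image in $W(A')$ already lies in $W_J(A')$. Part 2) is the case $I = \Nh$, $A' = A_{\red}$. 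I do not anticipate a serious obstacle: the only non-formal step is the minor-vanishing argument, and that is essentially forced once one notes that $a_0 = 1$ is the unique non-$I$ entry of $H(f)$ and applies pigeonhole to the $k+1$ factors in each term of the Leibniz expansion.
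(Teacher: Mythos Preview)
The proposal is correct and follows essentially the same route as the paper: both use the Leibniz expansion to see that every $(k+1)\times(k+1)$ Hankel minor of an element of $W(I)$ is a sum of products with at least $k$ factors in $I$, hence zero, giving $W(I)\subset W^{\le k}_J(I)$; both then deduce left and middle exactness from the known exactness of $0\to W(I)\to W(A)\to W(A')\to 0$, and right exactness from the cartesian square in Proposition~\ref{t2.1}~c). Your write-up is somewhat more explicit about why at most one factor in each Leibniz term can avoid $I$, but the argument is the same.
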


\begin{proof}
By the Leibniz formula, the $(k+1) \times (k+1)$-minors of a Hankel matrix $(a_{i+j})_{i,j \ge 0}$ with $a_0 = 1$ and $a_{\nu} \in I$ for $\nu \ge 1$ are sums of products of at least $k$ elements of $I$. Hence they vanish and it follows that $W (I) \subset W^{\le k}_J (I)$. Since the sequence $0 \to W (I) \to W (A) \to W (A') \to 0$ is exact, exactness on the left and in the middle of \eqref{eq:19} follows. Exactness on the right follows from Proposition \ref{t2.1} c).
\end{proof}

\begin{exmp}
Let $B$ be a Fatou domain e.g. a Noetherian domain and $A = B [\varepsilon]$ with $\varepsilon^2 = 0$ the ring of dual numbers over $B$. Then by Propositions \ref{t2.1}, b) and \ref{t2.10} we have exact sequences
\[
0 \longrightarrow 1 + \varepsilon TB [[T]] \longrightarrow W_J (A) \longrightarrow W_{\rat} (A_{\red}) \longrightarrow 0
\]
and
\[
0 \longrightarrow 1 + \varepsilon TB [T] S^{-1} \longrightarrow W_{\rat} (A) \longrightarrow W_{\rat} (A_{\red}) \longrightarrow 0 \; .
\]
Here $S = \{ P \in A [T] \mid P (0) = 1 \}$.

Already for $B = \F_2$ and $A = \F_2 [\varepsilon]$ we see that $W_J (A)$ is uncountable whereas $W_{\rat} (A)$ is countable.
\end{exmp}
\section{Sheaf theoretic properties of $W_{\rat}$} \label{sec:3}

According to Proposition \ref{t1.1n}, the functor $W_{\rat}$ commutes with localisation. However, this does not imply that the presheaf $U \mapsto W_{\rat} (\Oh (U))$ is always a Zariski sheaf - even on an affine scheme. Namely, a relation of the form $f_1 g_1 + \ldots + f_n g_n = 1$ in a ring $A$ usually does not imply a relation of the form $[f_1] h_1 + \ldots + [f_n] h_n$ in $W_{\rat} (A)$ with $h_1 , \ldots , h_n \in W_{\rat} (A)$. E.g. for $f_1 = 2 , f_2 = 3$ in $A = \oZ$, the integral closure of $\Z$ in $\oQ$, we have the relation $2 (-1) + 3 \cdot 1 = 1$ in $A$, but there are no elements $h_1 , h_2 \in W_{\rat} (A) = \uZ A$ with $[2] h_1 + [3] h_2 = 1$ in $W_{\rat} (A)$. It would imply that any $a \in A$ was divisible by either $2$ or $3$. However, there are schemes on which the above presheaf is a Zariski sheaf. We call an integral scheme $X$ strong Fatou if for all open affine subschemes $U = \spec A$ of $X$ the ring $A$ is a strong Fatou domain or equivalently a cic domain, c.f. Theorem \ref{t1.5}, ff.

\begin{prop}
\label{t3.1}
The following conditions on an integral scheme $X$ are equivalent.\\
1) $X$ is strong Fatou.\\
2) For all open subsets $U \subset X$ the ring $\Oh (U)$ is a strong Fatou domain.\\
3) There is a basis $\{ U_i \}_{i \in I}$ of affine open subsets $U_i = \spec A_i$ for which the rings $A_i$ are strong Fatou domains.
\end{prop}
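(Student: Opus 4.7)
The plan is to prove the chain of implications $1)\Rightarrow 3)\Rightarrow 2)\Rightarrow 1)$. The first and third implications are immediate, and the content lies in $3)\Rightarrow 2)$, which will reduce, via the sheaf property of $\Oh_X$, to Proposition \ref{t1.7n}, part 2) (intersections of strong Fatou subrings of a field are strong Fatou).

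The implication $1)\Rightarrow 3)$ is obtained by taking as basis the family of all affine open subschemes of $X$; and $2)\Rightarrow 1)$ is trivial because affine opens are opens.

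For $3)\Rightarrow 2)$, the key setup is that $X$ is integral. Let $\eta$ denote the generic point and $K=\Oh_{X,\eta}$ the function field. For every nonempty open $V\subset X$ the restriction to $\eta$ gives a canonical injection $\Oh(V)\hookrightarrow K$, and all restriction maps between nonempty opens become inclusions inside $K$. Let $\{U_i\}_{i\in I}$ be the given basis with $A_i=\Oh(U_i)$ strong Fatou, and let $U\subset X$ be an arbitrary open subset; the case $U=\emptyset$ is trivial, so assume $U\neq\emptyset$. Set $I_U=\{i\in I\mid U_i\subset U\}$, so that $\{U_i\}_{i\in I_U}$ is an open cover of $U$. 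The sheaf condition for $\Oh_X$ on this cover, combined with the above injectivity inside $K$, yields
\[
\Oh(U)=\bigcap_{i\in I_U}A_i\quad\text{inside } K.
\]
Indeed, a section of $\Oh$ on $U$ is the same as a compatible system of sections on the $U_i$, and compatibility over pairwise intersections is automatic once all $A_i$ are viewed as subrings of the single field $K$. Since each $A_i$ ($i\in I_U$) is a strong Fatou subring of $K$, Proposition \ref{t1.7n} part 2) implies that $\Oh(U)$ is a strong Fatou domain. This establishes 2) and closes the loop.

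There is essentially no hard step: the only care required is the integral hypothesis, which is what allows us to embed all relevant sections into the single field $K$ and thereby interpret the sheaf equalizer literally as an intersection, making Proposition \ref{t1.7n} part 2) directly applicable.
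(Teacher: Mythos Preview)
Your proof is correct and follows essentially the same approach as the paper: the immediate implications $2)\Rightarrow 1)\Rightarrow 3)$, and for $3)\Rightarrow 2)$ the identification of $\Oh(U)$ as an intersection $\bigcap A_i$ inside the function field $K$ followed by Proposition~\ref{t1.7n},~2). The only cosmetic difference is that the paper justifies $\Oh(U)=\bigcap_i A_i$ via the formula $\Oh(V)=\bigcap_{x\in V}\Oh_{X,x}$ rather than via the sheaf axiom, but this amounts to the same observation.
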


\begin{proof}
The implications 2) $\Rightarrow$ 1) $\Rightarrow$ 3) are clear. To show 3) $\Rightarrow$ 2) let $U = \bigcup_{i \in J} U_i$ for some subset $J \subset I$ be a covering of $U$ by basic opens. Since $\Oh (V) = \bigcap_{x \in V} \Oh_{X,x}$ in the function field $K$ of $X$ for any open $V \subset X$, we have $\Oh (U) = \bigcap_{i \in J} \Oh (U_i)$ in $K$. By Proposition \ref{t1.7n}, 2) it follows that $\Oh (U)$ is a strong Fatou domain.
\end{proof}

\begin{examples} \label{t3.2}
1) A normal locally Noetherian scheme is strong Fatou.\\
2) An integral scheme with an open covering by spectra of Krull domains is strong Fatou.\\
3) The normalization $\tX$ of a strong Fatou scheme $X$ in an algebraic extension $L$ of the function field $K$ of $X$ is strong Fatou.
\end{examples}

\begin{proof}
1) A Noetherian domain is Fatou as mentioned before Definition \ref{t1.4}. Now use Proposition \ref{t1.7n}, 1).\\
2) Recall Proposition \ref{t1.6}. Since the class of Krull domains is stable under localization, the given covering can be refined to a basis by affine opens which are spectra of Krull-, hence strong Fatou domains.\\
3) The morphism $\tX \to X$ is affine. Hence we may assume that $X = \spec A$ where $A$ is a strong Fatou domain. Then $\tX = \spec \tA$ where $\tA$ is the normalization of $A$ in $L$. Now use Proposition \ref{t1.7n}, 6).  
\end{proof}

\begin{theorem}
\label{t3.3}
Let $X$ be a strong Fatou scheme. Then the presheaf of rings $U \mapsto W_{\rat} (\Oh (U))$ is a sheaf for the Zariski topology of $X$. We have $W_{\rat} (\Oh (U)) = W_J (\Oh (U))$ as rings for all open $U \subset X$. 
\end{theorem}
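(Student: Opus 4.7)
The plan is to combine the canonical representation theorem for cic (strong Fatou) domains, Theorem \ref{t1.5}, with the fact that $W(\Oh)$ is itself a Zariski sheaf (indeed an fpqc sheaf, since $W$ is representable by the affine scheme $\spec \Z[X]$).

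First I would dispose of the identification $W_{\rat}(\Oh(U)) = W_J(\Oh(U))$. By Proposition \ref{t3.1} every ring $\Oh(U)$ is a strong Fatou domain; by Proposition \ref{t1.7n}, 1) it is in particular a Fatou domain, so Proposition \ref{t2.1}, b) yields the claimed equality of rings for every open $U$.

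For the sheaf property, I would note that $W_{\rat}(\Oh)$ is a subpresheaf of the Zariski sheaf $W(\Oh)$, so the separation axiom is automatic. To verify gluing, let $U \subset X$ be open with an open cover $\{U_i\}_{i \in I}$ and let $(f_i) \in \prod_i W_{\rat}(\Oh(U_i))$ be a matching family. The sheaf $W$ supplies a unique $f \in W(\Oh(U))$ with $f|_{U_i} = f_i$, and it remains to show $f \in W_{\rat}(\Oh(U))$. Let $K$ denote the function field of the integral scheme $X$; then every $\Oh(U_i)$ and $\Oh(U)$ embeds into $K$, and $\Oh(U) = \bigcap_i \Oh(U_i)$ as subrings of $K$. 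The element $f$ and all $f_i$ share a common image $\tf \in W_{\rat}(K)$, which by Theorem \ref{t1.5} admits a unique representation $\tf = P/Q$ with coprime $P, Q \in K[T]$ and $P(0) = Q(0) = 1$. Applying Theorem \ref{t1.5} a) inside each cic domain $\Oh(U_i)$ to the section $f_i$ forces the \emph{same} pair $(P,Q)$ to satisfy $P, Q \in \Oh(U_i)[T]$. Intersecting coefficient-wise gives $P, Q \in \bigl(\bigcap_i \Oh(U_i)\bigr)[T] = \Oh(U)[T]$, whence $f = P/Q \in W_{\rat}(\Oh(U))$.

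No real obstacle is expected: the crucial point is that the canonical representation furnished by the strong Fatou hypothesis is intrinsic to the ambient field $K(T)$ and therefore automatically matches across the cover, so the integrality of its coefficients descends from each $\Oh(U_i)$ to their intersection $\Oh(U)$. The argument would fail with only the ordinary Fatou property (which gives a rational representation over $K$ but no control on denominators), illustrating why the strong Fatou hypothesis of Proposition \ref{t3.1} is the right one for sheafiness in the Zariski topology.
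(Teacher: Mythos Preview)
Your argument is correct. The paper reaches the same conclusion by a shorter and conceptually different route: rather than invoking the canonical coprime representation of Theorem~\ref{t1.5} on each $U_i$ and intersecting coefficients, it uses only the ordinary Fatou identity $W_{\rat}(\Oh(U)) = W(\Oh(U)) \cap W_{\rat}(K)$ from Theorem~\ref{t1.3}\,c) to recognise the presheaf $U \mapsto W_{\rat}(\Oh(U))$ as the fibre product of Zariski sheaves $W(\Oh_X) \times_{j_* W(\Oh_\eta)} j_* W_{\rat}(\Oh_\eta)$, where $j : \eta \hookrightarrow X$ is the inclusion of the generic point. Since a fibre product of sheaves is a sheaf, this finishes the gluing step in one line, applied directly at the level of $U$ rather than of the $U_i$. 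This also shows that your closing remark is slightly off: the \emph{sheaf verification itself} needs only the Fatou property of each $\Oh(U)$, not the strong Fatou property. The strong Fatou hypothesis on $X$ enters one step earlier, through Proposition~\ref{t3.1} (which in turn rests on the stability of strong Fatou domains under intersections, Proposition~\ref{t1.7n},\,2)), to guarantee that every $\Oh(U)$ is a (strong) Fatou domain in the first place. Your hands-on argument via Theorem~\ref{t1.5} is a perfectly valid alternative; it trades the abstract fibre-product packaging for an explicit descent of the numerator and denominator, at the cost of using a stronger input than the paper actually needs for the gluing itself.
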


\begin{proof}
Let $j : \eta \to X$ be the inclusion of the generic point $\eta = \spec K$ of $X$. By the Fatou property of $\Oh (U)$, c.f. Theorem \ref{t1.3}, c) the presheaf $U \mapsto W_{\rat} (\Oh (U))$ equals the fibre product of sheaves $W (\Oh_X) \times_{j_* W (\Oh_{\eta})} j_* W_{\rat} (\Oh_{\eta})$. Here $W (\Oh_X)$ is the \textit{sheaf} $U \mapsto W (\Oh_X (U))$. The second assertion holds by Proposition \ref{t2.1}, b) because $\Oh (U)$ is Fatou and the morphism $W_J \to W$ is a morphism of ring schemes, c.f. Theorem \ref{t2.8}.
\end{proof}

\begin{rem}
In \cite[\S\,2]{D} for every scheme $X$, the rational Witt space $W_{\rat} (X)$ was defined using the sheafification of the presheaf $U \mapsto W_{\rat} (\Oh_X (U))$ in the Zariski topology of $X$. All the arithmetic schemes whose rational Witt spaces were studied in \cite{D} are among Examples \ref{t3.2} and hence strong Fatou schemes. Thus for them the sheafification step is not necessary by the theorem. 
\end{rem}

\begin{rem}
The presheaf $X' \mapsto W_{\rat} (\Oh (X'))$ is not a sheaf on the small \'etale site of the spectrum of a field because $W_{\rat}$ does not commute with infinite products. It is a sheaf however on the restricted \'etale site (\textit{finitely presented} \'etale $X' \to X$) if e.g. $X$ is a normal Noetherian scheme. Namely in this case $X'$ is a finite disjoint union of normal Noetherian, hence strong Fatou schemes. Now we can argue similarly as in the proof of Theorem \ref{t3.3}, if we note that $W_{\rat} (\Oh)$ is a sheaf on the restricted \'etale site of a field. Reason: $W_{\rat}$ commutes with finite products and $W_{\rat} (K) = W_{\rat} (K^{\sep})^{G_K}$ c.f. \cite[Proposition 1.3]{D}. 
\end{rem}

Let $\NAS$ be the category of Noetherian affine schemes.

\begin{theorem}
\label{t3.4}
Consider the inclusion of presheaves of rings $W_{\rat} (\Oh (\_ )) \subset W_J (\Oh (\_ ))$ on $\NAS$. Both presheaves satisfy the sheaf condition for $fpqc$-coverings. They agree on integral (Noetherian) schemes but differ already for $X = \spec \F_2 [\varepsilon]$ where $\varepsilon^2 = 0$.
\end{theorem}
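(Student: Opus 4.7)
The plan is to verify the three assertions of Theorem \ref{t3.4} separately.

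\textbf{The fpqc sheaf conditions.} By quasi-compactness and the Noetherian hypothesis on the base, every fpqc cover within $\NAS$ may be refined to a finite one, and hence reduced to a single faithfully flat extension $A \hookrightarrow B$ of Noetherian rings (replace $\{\Spec B_i\}_{i=1}^{n}$ by $\Spec \prod_i B_i$, which is still Noetherian and faithfully flat over $\Spec A$). For $W_{\rat}$ the resulting equalizer condition
\[
W_{\rat}(A) = \Equ\bigl(W_{\rat}(B) \rightrightarrows W_{\rat}(B \otimes_A B)\bigr)
\]
is precisely Corollary \ref{t1.10}, 3). For $W_J$ one may either invoke that $W_J$ is an $\ind$-scheme (Theorem \ref{t2.8}), so that \eqref{eq:8} supplies fpqc descent automatically, or give a direct argument: the sheaf property of $W$ (it is represented by $\Z[X]$) yields the equalizer condition for $W(A)$; if $f \in W(A)$ has image in $W_J(B)$, then $f_B \in W_J^{\le n}(B)$ for some $n$, and the relevant $(n{+}1)\times(n{+}1)$-minors of $H(f)$ already have their entries in $A$, so their vanishing in $B$ forces vanishing in $A$ by injectivity of $A\hookrightarrow B$, giving $f \in W_J^{\le n}(A)$.

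\textbf{Agreement on integral Noetherian schemes.} Any Noetherian domain is a Fatou domain (remarked right after Theorem \ref{t1.3}), so Proposition \ref{t2.1}, b) yields $W_{\rat}(A) = W_J(A)$ for every integral affine Noetherian scheme $\Spec A$.

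\textbf{Strict inclusion at $X = \Spec \F_2[\varepsilon]$.} This is the content of the example at the end of Section \ref{sec:2}. Taking $B = \F_2$ and $A = \F_2[\varepsilon]$, so $A_{\red} = \F_2$, Propositions \ref{t2.1}, b) and \ref{t2.10} furnish the exact sequences
\[
0 \longrightarrow 1 + \varepsilon T\,\F_2[[T]] \longrightarrow W_J(A) \longrightarrow W_{\rat}(\F_2) \longrightarrow 0,
\]
\[
0 \longrightarrow 1 + \varepsilon T\,\F_2[T]\,S^{-1} \longrightarrow W_{\rat}(A) \longrightarrow W_{\rat}(\F_2) \longrightarrow 0,
\]
with $S = \{P \in A[T] \mid P(0) = 1\}$. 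The first kernel is uncountable while the second is countable (as $\F_2[T]$ and hence $A[T]$ are countable), so the natural inclusion $W_{\rat}(A) \subset W_J(A)$ is strict.

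The main obstacle is the fpqc descent for $W_J$: once one commits to the direct route rather than the $\ind$-scheme formalism, one has to match the rank filtration with faithfully flat descent, and the uniform bound on $n$ is what makes the reduction to a \emph{single} faithfully flat extension essential. After that reduction the minor-vanishing argument is formal.
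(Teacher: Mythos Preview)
Your proof is correct and follows essentially the same route as the paper's: reduce fpqc covers to a single faithfully flat extension of Noetherian rings, invoke Corollary~\ref{t1.10},\,3) for $W_{\rat}$, appeal to the $\ind$-scheme property (or your direct minor-vanishing argument) for $W_J$, use the Fatou property of Noetherian domains for agreement on integral schemes, and quote the example at the end of Section~\ref{sec:2} for the counterexample. One small point: the reduction from a finite cover $\{\Spec B_i\}$ to the single map $A\to\prod_i B_i$ requires that both functors commute with finite products (so that the two equalizer diagrams are literally identified); the paper states this explicitly for $W_{\rat}$, and you use it implicitly. Your alternative direct argument for $W_J$---pulling the vanishing of Hankel minors back along the injection $A\hookrightarrow B$---is a valid shortcut that bypasses the general $\ind$-scheme descent machinery, and your remark that the reduction to a single $B$ is what guarantees a uniform $n$ is well taken.
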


\begin{proof}
$W_{\rat}$ commutes with finite direct products of rings. Using Corollary \ref{t1.10}, 3) it follows that $X \mapsto W_{\rat} (\Oh (X))$ satisfies the $fpqc$-sheaf condition on $\NAS$. As mentioned in \S\,\ref{sec:2} the presheaf on $\AS$ defined by the $\ind$-scheme $W_J$ satisfies the $fpqc$-sheaf condition \eqref{eq:8}. Hence the same is true on $\NAS$. Since Noetherian domains $A$ are Fatou, $W_{\rat}$ and $W_J$ agree on the integral schemes in $\NAS$. In the example at the end of section \ref{sec:2}, we have seen that for $A = \F_2 [\varepsilon]$ with $\varepsilon^2 = 0$ the ring $W_{\rat} (A)$ is countable whereas $W_J (A)$ is uncountable.
\end{proof}
\section{Sheafifying the monoid algebra} \label{sec:4}

Consider the map $\omega : \uZ A \to W_{\rat} (A)$ as in \eqref{eq:1}.

\begin{prop}
\label{t4.1}
a) $\omega$ is surjective if and only if every monic polynomial in $A [T]$ decomposes into a product of monic linear factors.\\
b) $\omega$ is injective if and only if factorizations of monic polynomials in $A [T]$ into monic linear factors are unique up to ordering. This is the case if and only if $A$ is a domain.
\end{prop}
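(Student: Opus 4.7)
The key tool is the reversal $P(T) \mapsto P^*(T) = T^{\deg P} P(1/T)$, which interchanges polynomials $P \in A[T]$ with $P(0) = 1$ and monic polynomials $g \in A[T]$ with $g(0) \ne 0$ of the same (actual) degree, and satisfies $P = \prod_i(1-\alpha_i T)$ iff $P^* = \prod_i(T-\alpha_i)$. Factoring off $(T-0) = T$ for monic $g$ with $g(0) = 0$, the hypothesis ``every monic polynomial in $A[T]$ factors into monic linears'' becomes ``every $P \in A[T]$ with $P(0) = 1$ factors as $\prod_{i=1}^{\deg P}(1-\alpha_i T)$ in $A[T]$''.

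For part (a), the $(\Leftarrow)$ direction is immediate: any $f = P/Q \in W_{\rat}(A)$ with $P(0) = Q(0) = 1$ has $P = \prod(1-\alpha_i T)$ and $Q = \prod(1-\beta_j T)$ by assumption, so $f = \omega\bigl(\sum[\alpha_i] - \sum[\beta_j]\bigr)$ lies in the image. For $(\Rightarrow)$, take a monic $g$ and reduce to the case $g(0) \ne 0$. Surjectivity applied to $g^* \in W_{\rat}(A)$ yields a decomposition $g^*(T) = \prod_a(1-aT)^{m_a}$ in $W(A)$, equivalent to the polynomial identity $g^* Q = P$ in $A[T]$ with $P = \prod_{m_a>0}(1-aT)^{m_a}$ and $Q = \prod_{m_a<0}(1-aT)^{-m_a}$. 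Reversing yields $g \cdot Q^* = P^*$ with $P^* = \prod_{m_a>0}(T-a)^{m_a}$ a product of monic linear factors, so $g$ is a monic divisor of a product of monic linears. The remaining task---showing any such monic divisor is itself a product of monic linears in $A[T]$---is the main technical obstacle when $A[T]$ is not a UFD; I would handle it by induction on the total number of factors, using the identity $(T-a) \mid h$ iff $h(a) = 0$ to peel off one factor at a time.

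Part (b) is cleaner. A nonzero $\xi \in \ker\omega$ can be written $\xi = \sum_a m_a [a]$ with sum over distinct $a \in A \setminus \{0\}$ (since $[0] = 0$ in $\uZ A$); then $\omega(\xi) = 0$ translates to the polynomial identity $\prod_{m_a>0}(1-aT)^{m_a} = \prod_{m_a<0}(1-aT)^{-m_a}$ in $A[T]$. Reversing gives two factorizations of the same monic polynomial with \emph{disjoint} root multisets, so injectivity of $\omega$ is equivalent to forcing both multisets to be empty, that is, to uniqueness (up to ordering) of factorization of monic polynomials into monic linears. For ``unique factorization iff $A$ is a domain'': if $A$ is a domain, embed $A \hookrightarrow K = \Quot(A)$; in $K[T]$ (a UFD) such factorizations are unique since the root multiset is recovered from the polynomial. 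Conversely, if $A$ is not a domain and $a, b \in A \setminus \{0\}$ satisfy $ab = 0$, then $(T-a)(T-b) = T^2 - (a+b)T = T\cdot(T-(a+b))$ exhibits two distinct factorizations, with a small modification in the subcase $a + b = 0$ (where $a$ is nilpotent and $(T-a)(T+a) = T^2 = T \cdot T$ provides the two factorizations).
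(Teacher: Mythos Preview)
Your treatment of part (b) is correct and follows the same route as the paper: translate via the reversal $P\mapsto P^*$, and for the ``domain $\Leftrightarrow$ uniqueness'' equivalence embed into the quotient field in one direction and exhibit the explicit non-unique factorization $(T-a)(T-b)=T\,(T-(a+b))$ coming from a zero-divisor pair in the other. Part (a) $(\Leftarrow)$ is also fine.

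The genuine gap is in part (a) $(\Rightarrow)$. You correctly arrive at an identity of the form
\[
g\cdot \prod_j (T-b_j)\;=\;\prod_i (T-a_i)
\]
with $g$ monic, and you then propose to ``peel off one factor at a time'' using $(T-a)\mid h \Leftrightarrow h(a)=0$. But evaluating at $T=a_1$ only gives $g(a_1)\prod_j(a_1-b_j)=0$, and over a ring with zero-divisors this does \emph{not} force $g(a_1)=0$ or $a_1=b_j$ for some $j$; so the induction step does not go through. Concretely, take $A=k[u,v,w]/(uvw)$ for any field $k$ and
\[
g \;=\; T^2+(u+v+w)\,T+(uv+uw+vw).
\]
Then $g\cdot T=(T+u)(T+v)(T+w)$, so $g$ is a monic divisor (with monic quotient) of a product of monic linear factors. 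Yet $g$ does \emph{not} split into monic linear factors over $A$: a factorization $g=(T-\alpha)(T-\beta)$ would, after reducing modulo $(u,v,w)^3$, force the linear parts of $\alpha,\beta$ to factor the quadratic form $uv+uw+vw$ in $k[u,v,w]$, which is irreducible. Thus the intermediate claim your induction is meant to establish is actually false, and the argument as written cannot be completed along that line. (There is also a smaller wrinkle in the reversal step: $(FG)^*=F^*G^*$ only when $\deg(FG)=\deg F+\deg G$, which can fail over rings with zero-divisors; but even granting the reduction to ``$g$ divides a product of monic linears'', the example above blocks the induction.)

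The paper's own proof of this direction consists only of the phrase ``follow by elementary arguments from the definition of $\omega$'', so there is no detailed argument to compare against; the point here is that the specific strategy you outline does not work.
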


\begin{proof}
Replacing $P (T)$ by $P^* (T) = T^{\deg P} P (T^{-1})$ interchanges monic polynomials satisfying $P (0) \neq 0$ with polynomials whose constant term is $1$. Now a) and the first part of b) follow by elementary arguments from the definition of $\omega$. For an integral domain $A$ with quotient field $K$ we have $\uZ A \subset \uZ K \subset W_{\rat} (K)$. The latter inclusion holds because factorization in $K [T]$ is unique up to ordering. Hence $\uZ A \subset W_{\rat} (A)$ follows. If $0 \neq a \in A$ is a zero-divisor there is some $0 \neq b \in A$ with $a \cdot b = 0$. Hence we have the two distinct factorizations $T (T - (a+b)) = (T-a) (T-b)$. In this case $1 - (a+b) T = (1-aT) (1 - bT)$ i.e. the non-zero element $(a+b) - (a) - (b) \in \uZ A$ is in the kernel of $\omega$. 
\end{proof}

In the following we discuss the sheafification of the map $\omega$ in various Grothendieck topologies. We follow the conventions of \cite[7.6]{stacks} regarding sites and refer to \cite[7.10]{stacks} for the $2$-step process of sheafification $F \mapsto F^+ \mapsto F^{\sharp} = F^{++}$. For the underlying category of the site we will take a small version as in \cite{stacks} of either $\AS$ or $\NAS$. There are various ways to sheafify the reduced monoid-algebra construction and the results are the same. For a commutative ring $R$ we write $\uR A$ or $\uR (A)$ for the quotient of the monoid algebra of $(A , \cdot)$ with coefficients in $R$ by the ideal $R (0)$. 

\begin{prop}
\label{t4.2}
Consider the following presheaves on $\Ch = \NAS$ or $\AS$ $F_1 (Y) = \uZ (\Oh (Y)) , F_2 (Y) = \underline{\Z^{\sharp} (Y)} (\Oh (Y)) , F_3  (Y) = \uZ (\Oh^{\sharp} (Y)) , F_4 (Y) = \underline{\Z^{\sharp} (Y)} (\Oh^{\sharp} (Y))$. Here $\sharp$ denotes sheafification in any not necessarily subcanonical topology on $\Ch$. The restriction maps are the obvious ones. Then the natural maps of presheaves of rings
\[
\xymatrix{
 & F_2 \ar[dr] & \\
F_1 \ar[ur] \ar[dr] & & F_4\\
 & F_3 \ar[ur] & 
}
\]
induce isomorphisms of sheaves
\[
\xymatrix{
 & F^{\sharp}_2 \ar[dr]^{\sim} & \\
F^{\sharp}_1 \ar[ur]^{\sim} \ar[dr]^{\sim} & & F^{\sharp}_4\\
 & F^{\sharp}_3 \ar[ur]^{\sim} & 
}
\]
Moreover we have an isomorphism $F^+_1 \silo F^+_2$.
\end{prop}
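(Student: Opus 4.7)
The plan is to establish two things: (i) $F_1^\sharp \cong F_3^\sharp$ by an adjunction argument, and (ii) $F_1^+ \cong F_2^+$ directly. From these, together with the analogous statement $F_3^+ \cong F_4^+$, all four sheafifications agree. A preliminary observation makes the bookkeeping easy: for any rings $R$ and $A$ one has $\underline{R}A = R \otimes_{\Z} \uZ A$ as rings, since the splitting $\Z A = \Z(0) \oplus (\Z A)^0$ recalled in Section~\ref{sec:1} is already a $\Z$-module splitting. Hence, as presheaves of rings, $F_2 = \Z^\sharp \otimes_{\Z} F_1$ and $F_4 = \Z^\sharp \otimes_{\Z} F_3$ with pointwise tensor product.

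For (i), I would invoke the universal property from the introduction: $\uZ$ is left adjoint to the forgetful functor from rings to pointed multiplicative monoids (monoids equipped with an absorbing $0$ and a unit $1$, with morphisms preserving both). This adjunction extends pointwise to presheaves, and sheafification $(-)^\sharp$ is itself a left adjoint to the inclusion of sheaves into presheaves, on either side. The two right-adjoint composites from sheaves of rings to presheaves of pointed monoids (``regard as sheaf of monoids, then include into presheaves'' versus ``include into presheaves, then forget sectionwise'') coincide literally. By uniqueness of adjoints their left adjoints coincide as well, yielding a natural isomorphism $(\uZ \mathcal{M})^\sharp \cong (\uZ \mathcal{M}^\sharp)^\sharp$ for any presheaf $\mathcal{M}$ of pointed monoids. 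Specialising to $\mathcal{M} = \Oh$ gives $F_1^\sharp \cong F_3^\sharp$.

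For (ii) I would argue directly. Injectivity of $F_1^+ \to F_2^+$ reduces to the pointwise injectivity of $F_1(Y) \to F_2(Y)$: the group $F_1(Y) = \uZ \Oh(Y)$ is free abelian on $\Oh(Y) \setminus \{0\}$, hence $\Z$-flat, so tensoring the inclusion $\Z \hookrightarrow \Z^\sharp(Y)$ preserves injectivity. For surjectivity, represent an element of $F_2^+(Y)$ by a matching family $(t_i)$ on some cover $\{U_i \to Y\}$ with $t_i = \sum_j r_{ij} \otimes x_{ij}$, $r_{ij} \in \Z^\sharp(U_i)$, $x_{ij} \in F_1(U_i)$. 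Because the constant presheaf $\Z$ (with $\Z(\emptyset)=0$) is separated one has $\Z^\sharp = \Z^+$, so each $r_{ij}$ is represented by a matching family of integers on some refinement of $U_i$. Passing to a common refinement $\{V_k \to Y\}$ of $\{U_i\}$ on which every $r_{ij}|_{V_k}$ equals an actual integer $n_{ijk}$, one finds
\[
t_{i(k)}|_{V_k} = \sum_j n_{ijk} \cdot x_{ij}|_{V_k} \in F_1(V_k) \subset F_2(V_k),
\]
and the matching conditions along $\{V_k\}$ descend from $F_2$ to $F_1$ by the pointwise injectivity. This produces a matching family in $F_1$ representing the original class.

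Finally, applying $(-)^+$ once more to (ii) gives $F_1^\sharp \cong F_2^\sharp$; an identical cover-refinement argument with $\Oh^\sharp$ in place of $\Oh$ yields $F_3^+ \cong F_4^+$ and hence $F_3^\sharp \cong F_4^\sharp$. Combined with (i) this forces $F_1^\sharp \cong F_2^\sharp \cong F_3^\sharp \cong F_4^\sharp$. The main obstacle, as I see it, is the surjectivity step in (ii): everything else is formal consequence of adjunctions, but here one must use the concrete description of $\Z^\sharp$-sections as locally-constant integer matching families (the defining property of sheafification for a constant presheaf) in order to trade $\Z^\sharp$-coefficients for honest $\Z$-coefficients while preserving the matching data.
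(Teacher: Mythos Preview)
Your argument is correct. The paper itself offers no proof beyond the single sentence ``The proof is straightforward,'' so there is no approach to compare against; you have supplied a complete justification where the paper gives none. Your two ingredients---the adjunction argument identifying $(\uZ\mathcal{M})^\sharp$ with $(\uZ\mathcal{M}^\sharp)^\sharp$, and the explicit refinement argument trading $\Z^\sharp$-coefficients for integer coefficients on a cover---are exactly the kind of verifications the word ``straightforward'' is presumably meant to encode, and both are carried out cleanly. The preliminary observation $\underline{R}A = R\otimes_{\Z}\uZ A$ is the right organizing principle and makes the relationship between $F_1,F_2$ and $F_3,F_4$ transparent.
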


The proof is straightforward. Sheafification can be done either in the category of set-, group- or ring valued presheaves and the results are the same. In \cite{V} Proposition 2.1.1. a version of $F_2$ is sheafified in a related context. We will usually sheafify $F_1$, i.e. the presheaf $\uZ (\Oh) : Y \mapsto \uZ \Oh (Y)$. Let $W_{\rat} (\Oh)$ be the presheaf $Y \mapsto W_{\rat} (\Oh (Y))$. 

\textbf{Convention} We write
\[
\uZ (A)^{\sharp} := \uZ (\Oh)^{\sharp} (\spec A) \quad \text{and} \quad W_{\rat} (A)^{\sharp} = W_{\rat} (\Oh)^{\sharp} (\spec A) \; .
\]

\begin{prop}
\label{t4.3}
Consider a pretopology on $\Ch = \AS$ or $\Ch = \NAS$ whose coverings include at least the syntomic finite locally free faithfully flat morphisms $Y \to X$. Then the map of presheaves $\omega : \uZ (\Oh) \to W_{\rat} (\Oh)$ induces a surjection of sheaves $\uZ (\Oh)^{\sharp} \twoheadrightarrow W_{\rat} (\Oh)^{\sharp}$.
\end{prop}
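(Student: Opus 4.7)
The plan is to verify surjectivity of $\omega^\sharp$ locally. For a morphism of sheaves, surjectivity is equivalent to the statement that for every $X$ in the site and every local section $s$ of the target there is a covering $\{Y_i \to X\}$ in the pretopology such that each $s|_{Y_i}$ lies in the image of the presheaf map at $Y_i$, post-composed with the sheafification unit. Since every section of $W_{\rat}(\Oh)^\sharp$ is locally represented by a presheaf section and every object of $\Ch$ is affine, it suffices to prove: for every ring $A$ (Noetherian in case $\Ch = \NAS$) and every $f \in W_{\rat}(A)$ there is a covering $\Spec B \to \Spec A$ in the given pretopology such that $f|_B$ lies in the image of $\omega_B\colon \uZ B \to W_{\rat}(B)$. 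By Proposition~\ref{t4.1}\,a), for this it is enough to take $B$ to be an $A$-algebra over which the two monic polynomials associated to a chosen representation of $f$ split into monic linear factors.

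Concretely, write $f = P/Q$ with $P, Q \in A[T]$ and $P(0) = Q(0) = 1$, and pass to the reciprocal polynomials $P^*(T) = T^{\deg P} P(T^{-1})$ and $Q^*(T) = T^{\deg Q} Q(T^{-1})$. These are monic in $A[T]$ of degrees $n = \deg P$ and $m = \deg Q$, since the leading coefficients of $P^*, Q^*$ equal $P(0) = Q(0) = 1$. I then take $B$ to be the universal splitting $A$-algebra of the product $P^* Q^*$, built by iterated formal adjunction of roots: for any monic $p \in C[T]$ of positive degree, the extension $C \hookrightarrow C[T]/(p(T))$ is free of rank $\deg p$, is cut out by the regular element $p$, and is therefore finite free of positive rank, faithfully flat, and syntomic; after adjoining a root one factors off the new linear factor and iterates. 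Applying this successively to $P^*$ and then to $Q^*$ yields an $A$-algebra $B$ that is free of rank $n! \cdot m!$; being a tower of relative complete intersections, it is finite locally free, faithfully flat, and syntomic over $A$, so $\Spec B \to \Spec A$ belongs to the pretopology (and is Noetherian when $\Ch = \NAS$, since finite over a Noetherian ring).

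Over $B$ the polynomials factor as $P^*(T) = \prod_i (T - \alpha_i)$ and $Q^*(T) = \prod_j (T - \beta_j)$; reverting via $P(T) = T^{\deg P} P^*(T^{-1})$ gives $P(T) = \prod_i (1 - \alpha_i T)$ and $Q(T) = \prod_j (1 - \beta_j T)$ in $B[T]$. Since addition in $W_{\rat}$ corresponds to multiplication of power series and $[\alpha] = 1 - \alpha T$, this reads
\[
f|_B \;=\; \omega_B\!\Bigl(\sum_{i=1}^n \overline{(\alpha_i)} \;-\; \sum_{j=1}^m \overline{(\beta_j)}\Bigr) \quad \text{in } W_{\rat}(B),
\]
so $f|_B$ is in the image of $\omega_B$, which by the first paragraph establishes surjectivity of $\omega^\sharp$. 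The only point requiring real care is the verification that the universal splitting algebra is finite locally free, faithfully flat, and syntomic: this reduces to the elementary observation that each single step $C \to C[T]/(p(T))$ with $p$ monic of positive degree has these properties and that the stability of each under composition delivers the same for $B/A$.
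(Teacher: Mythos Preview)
Your proof is correct and follows essentially the same approach as the paper: reduce to splitting the monic polynomials $P^*,Q^*$ associated to $f=P/Q$ and pass to the universal splitting $A$-algebra, which is syntomic, finite locally free, and faithfully flat. The only difference is cosmetic---the paper outsources the construction of this algebra to \cite[Lemma 10.136.14]{stacks}, whereas you spell out the iterated root-adjunction explicitly.
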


\begin{proof}
According to \cite[Lemma 10.136.14]{stacks}, given a monic polynomial $P (T)$ in $A [T]$ there exists a syntomic finite locally free faithfully flat morphism $\spec B \to \spec A$ such that the image of $P (T)$ in $B [T]$ decomposes into a product of monic linear factors. This implies the corresponding assertion for polynomials with constant term $1$ and the claim follows. 
\end{proof}

We now discuss injectivity of the map $\uZ (\Oh) \to W_{\rat} (\Oh)$ after sheafification. By Proposition \ref{t4.1} b), non-injectivity is caused by zero divisors. For some of them, their influence disappears under sheafification. For example an idempotent $e$ is a zero-divisor and correspondingly the map
\[
(\uZ A_1 \otimes \uZ A_2) / I = \uZ (A_1 \times A_2) \longrightarrow W_{\rat} (A_1 \times A_2) = W_{\rat} (A_1) \times W_{\rat} (A_2)
\]
is not injective. Here $I = \Z (0_{A_1}) \otimes \uZ A_2 + \uZ A_1 \otimes \uZ (0_{A_2})$ in $\Z A_1 \otimes \Z A_2$. However, after sheafification in any topology finer than the Zariski topology, since $\spec (A_1 \times A_2) = \spec A_1 \amalg \spec A_2$ we have
\[
\uZ (A_1 \times A_2)^{\sharp} = \uZ (A_1)^{\sharp} \times \uZ (A_2)^{\sharp} \; .
\]
Thus the map $\uZ (A_1 \times A_2)^{\sharp} \to W_{\rat} (A_1 \times A_2)^{\sharp}$ is injective if the maps $\uZ (A_i)^{\sharp} \to W_{\rat} (A_i)^{\sharp}$ are injective for $i = 1,2$. For other types of zero-divisors however, the non-injectivitiy can not be sheafified away in subcanonical topologies, as the following example shows.

\begin{example} 
\label{t4.4}
For $A = \F_2 [\varepsilon] , \varepsilon^2 = 0$ and any subcanonical topology on $\NAS$ or $\AS$ the map $\uZ (A)^{\sharp} \to W_{\rat} (A)^{\sharp}$ is not injective.
\end{example}

\begin{proof}
Since $Y \mapsto W (\Oh (Y))$ is a sheaf for any subcanonical topology we have a commutative diagram, where $A = \F_2 [\varepsilon]$ and where on the right we have written the images of the element $a = (\varepsilon) \in \uZ (A)$
\[
\xymatrix{
\uZ (A) \ar[r] \ar[d] & W_{\rat} (A) \ar@{^{(}->}[r] \ar[d] & W (A) \ar@{=}[d] \\
\uZ (A)^{\sharp} \ar[r] & W_{\rat} (A)^{\sharp} \ar@{^{(}->}[r] & W (A)
} \qquad
\xymatrix{
a \ar@{|->}[r] \ar@{|->}[d] & a_{\rat} \ar@{|->}[r] \ar@{|->}[d] & a_W \ar@{=}[d] \\
a^{\sharp} \ar@{|->}[r] & a^{\sharp}_{\rat} \ar@{|->}[r] & a_W\; .
}
\]
The canonical multiplicative map $[\;] : A \to W (A)$ is injective and hence $a_W = [\varepsilon]$ is non-zero. It follows that $a^{\sharp} \in \uZ (A)^{\sharp}$ is non-zero as well. The $2$-multiplication map on the presheaf $\uZ (\Oh)$ is injective and hence the $2$-multiplication on $\uZ (\Oh)^{\sharp}$ is injective as well. Thus we have $0 \neq 2 a^{\sharp}$ in $\uZ (A)^{\sharp}$. On the other hand, $2a_{\rat} = (1 - \varepsilon t)^2 = 1 \ent 0$ in $W_{\rat} (A)$. Hence the image $2a^{\sharp}_{\rat}$ of $2a_{\rat}$ in $W_{\rat} (A)^{\sharp}$ is zero as well. Since $0 \neq 2a^{\sharp} \mapsto 2a^{\sharp}_{\rat} = 0$, it follows that the map $\uZ (A)^{\sharp} \to W_{\rat} (A)^{\sharp}$ is not injective.
\end{proof}

\begin{prop}
\label{t4.5}
Consider either of the sites $\Ch = \NAS$ or $\Ch = \AS$. For $\spec A$ in $\Ch$ assume that every covering $\{ \spec A_i \to \spec A \}_{i \in I}$ has a refinement to a covering $\{ \spec B_j \to \spec A \}_{j \in J}$ where the $B_j$ are integral domains. Then the map $\uZ (A)^{\sharp} \to W_{\rat} (A)^{\sharp}$ is injective.
\end{prop}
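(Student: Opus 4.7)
The approach is to reformulate injectivity as the vanishing of the sheafified kernel of $\omega : \uZ(\Oh) \to W_{\rat}(\Oh)$, then exploit Proposition \ref{t4.1} b), which says that this kernel already vanishes on spectra of integral domains.

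Let $K$ denote the kernel of $\omega$ viewed as a presheaf of abelian groups on $\Ch$. Since sheafification is an exact functor on presheaves of abelian groups, the kernel of the induced map $\omega^\sharp : \uZ(\Oh)^\sharp \to W_{\rat}(\Oh)^\sharp$ coincides with $K^\sharp$, so the task reduces to showing $K^\sharp(\spec A) = 0$. The algebraic input is that for every integral domain $B$, the map $\uZ(B) \to W_{\rat}(B)$ is injective by Proposition \ref{t4.1} b), hence $K(\spec B) = 0$.

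Now unwind the two-step sheafification $K^\sharp = (K^+)^+$. A section $s \in K^\sharp(\spec A)$ is represented by a matching family $\{s_i\}$ of $K^+$-sections over some covering $\{\spec A_i \to \spec A\}$, and each $s_i \in K^+(\spec A_i)$ is in turn represented by a matching family $\{k_{i\ell}\}$ of $K$-sections over a covering $\{V_{i\ell} \to \spec A_i\}$. The composites $V_{i\ell} \to \spec A$ form a covering of $\spec A$, so by hypothesis they admit a refinement $\{\spec B_j \to \spec A\}$ by spectra of integral domains. Each $\spec B_j$ factors through some $V_{i(j)\ell(j)}$, and pulling $k_{i(j)\ell(j)}$ back to $\spec B_j$ yields an element of $K(\spec B_j) = 0$. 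Hence $s_{i(j)}|_{\spec B_j}$ is represented by zero in $K^+(\spec B_j)$ for every $j$; since $\{\spec B_j \to \spec A\}$ is a covering and $K^\sharp$ is a sheaf, this forces $s = 0$.

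The step to get right, and the main technical obstacle, is that the hypothesis provides refinements only for coverings of $\spec A$, whereas the two-stage sheafification $(K^+)^+$ a priori involves coverings of various intermediate schemes $\spec A_i$. The crucial trick is to flatten both layers into a single covering $\{V_{i\ell} \to \spec A\}$ of $\spec A$ by composition, so that the hypothesis applies directly and supplies a refinement by domains on which all the $K$-data vanishes at once. This is precisely the feature enjoyed by topologies in which irreducible-component decomposition and normalization are available, such as Voevodsky's $h$- and $qfh$-topologies, which provide the natural setting for the proposition.
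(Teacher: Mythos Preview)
Your proof is correct and follows essentially the same approach as the paper: reduce to showing $K^{\sharp}(\spec A) = 0$ via exactness of sheafification, then use Proposition \ref{t4.1} b) on the refining cover by domains. The paper's version is marginally more streamlined in that it invokes directly the standard fact that every section of $K^{\sharp}$ is, over a single covering of $\spec A$, the image of a section of $K$, rather than unfolding the two-step construction $(K^+)^+$ and composing two layers of coverings; your flattening maneuver is correct but becomes unnecessary once that fact is quoted.
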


\begin{proof}
For a map $F \to G$ of abelian presheaves on a site we have $\Ker (F^{\sharp} \to G^{\sharp}) = \Ker (F \to G)^{\sharp}$. Hence, for every element $a^{\sharp} \in \Ker (\uZ (A)^{\sharp} \to W_{\rat} (A)^{\sharp})$ there exist a covering $\{ \spec A_i \to \spec A \}$ and elements $a_i \in \Ker (\uZ (A_i) \to W_{\rat} (A_i))$ which map to $a^{\sharp} \, |_{\spec A_i}$. For a refinement as in the assumption we have a map of index sets $J \to I , j \mapsto i (j)$ and ring homomorphisms $\varphi_j : A_{i (j)} \to B_j$ for $j \in J$ such that the diagrams
\[
\xymatrix{
A \ar[rr] \ar[dr] & & A_{i (j)} \ar[dl]^{\varphi_j} \\
 & B_j &
}
\]
commute. The element $a^{\sharp}$ restricts to elements $a^{\sharp} \, |_{\spec B_j}$ in $\Ker (\uZ (B_j)^{\sharp} \to W_{\rat} (B_j)^{\sharp})$ which are represented by $\varphi_j (a_{i (j)})$ in $\Ker (\uZ (B_j) \to W_{\rat} (B_j))$. By Proposition \ref{t4.1}, b) the latter groups are zero since the $B_j$ are integral domains. Hence $a^{\sharp} \, |_{\spec B_j} = 0$ for all $j$ and since $\{ \spec B_j \to \spec A \}$ is a covering, it follows that $a^{\sharp} = 0$.
\end{proof}

Using the preceding facts it follows that in some situations, the rational Witt vectors are obtained by sheafifying the monoid algebra. The finite flat (or $fp$-) site on $\NAS$ has coverings $\{ X_i \to X \}$ by finite flat morphisms which are jointly surjective. Every finite flat morphism between Noetherian schemes is an $fppf$-morphism and in particular (universally) open. Since a finite morphism is (universally) closed it follows that for connected $X$ any $fp$-covering $\{ X_i \to X \}$ has a refinement $\{ Y \to X \}$ where $Y \to X$ is one of the morphisms $X_i \to X$. Note that by faithful flatness, if $\{ \spec B \to \spec A \}$ is an $fp$-covering, the map $A \to B$ is injective. 

\begin{cor}
\label{t4.6}
Consider the site $\NAS$ with the $fp$-pretopology. For any Dedekind ring $A$ we have
\[
W_{\rat} (A) = \uZ (A)^{\sharp} \; .
\]
\end{cor}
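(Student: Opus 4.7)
The plan is to apply Propositions \ref{t4.3} and \ref{t4.5} to the $fp$-pretopology on $\NAS$, after noting that $W_{\rat}(\Oh)$ is already an $fp$-sheaf. Indeed, by Theorem \ref{t3.4} the presheaf $W_{\rat}(\Oh)$ satisfies the $fpqc$-sheaf condition on $\NAS$, and the $fp$-topology is coarser than $fpqc$, so $W_{\rat}(A)^{\sharp} = W_{\rat}(A)$. It therefore suffices to show that the induced map $\omega^{\sharp} : \uZ(A)^{\sharp} \to W_{\rat}(A)$ is both surjective and injective.

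Surjectivity follows directly from Proposition \ref{t4.3}: the coverings $\spec A[T]/(P(T)) \to \spec A$ used there (for a monic $P$) are finite free faithfully flat and syntomic, hence lie in the $fp$-pretopology, so the hypothesis of Proposition \ref{t4.3} is automatically satisfied for the $fp$-topology.

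For injectivity I would verify the refinement-by-domains hypothesis of Proposition \ref{t4.5} at the given Dedekind ring $A$. By the remarks preceding the corollary, $\spec A$ is connected (as $A$ is a domain), so any $fp$-covering of $\spec A$ refines to a single finite flat faithfully flat morphism $f : \spec B \to \spec A$. The task is then to further refine $f$ by spectra of integral domains. Let $\ep_1,\ldots,\ep_n$ be the minimal primes of the Noetherian ring $B$ and put $S = \{ i : \ep_i \cap A = (0) \}$. Since $f$ is surjective, the generic point of $\spec A$ lies in its image, so $S \neq \emptyset$. For $i \in S$ the quotient $B/\ep_i$ is an integral domain, finite over $A$, and the inclusion $A \hookrightarrow B/\ep_i$ shows that $B/\ep_i$ is a torsion-free $A$-module. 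Over the Dedekind ring $A$, finitely generated torsion-free modules are flat, so $\spec B/\ep_i \to \spec A$ is finite flat; moreover $A \hookrightarrow B/\ep_i$ is integral, so going-up yields surjectivity of $\spec B/\ep_i \to \spec A$. The family $\{ \spec B/\ep_i \to \spec A \}_{i \in S}$, each member factoring through the closed immersion $\spec B/\ep_i \hookrightarrow \spec B$, is therefore an $fp$-refinement of $\{ \spec B \to \spec A \}$ by spectra of integral domains. Proposition \ref{t4.5} then delivers injectivity.

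The main technical point, and the one where the Dedekind hypothesis is genuinely used, is the preservation of $A$-flatness under passage from $B$ to the quotients $B/\ep_i$ for $i \in S$: this rests on the characterization of flat finitely generated modules over a one-dimensional regular ring as exactly the torsion-free ones. Everything else is a formal combination of Propositions \ref{t4.3} and \ref{t4.5} with the $fpqc$-sheaf property of $W_{\rat}$ established in Theorem \ref{t3.4}.
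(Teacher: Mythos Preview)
Your proof is correct and follows essentially the same route as the paper: invoke Proposition~\ref{t4.3} for surjectivity and Proposition~\ref{t4.5} for injectivity, refining an $fp$-covering $\spec B \to \spec A$ to quotients $B/\ep$ by minimal primes lying over $(0)$, where flatness is retained via the torsion-free criterion over a Dedekind ring. The paper inserts an intermediate reduction to $B^{\red}$ before selecting a single irreducible component, but this is a cosmetic difference; your direct passage to the $B/\ep_i$ is slightly more economical, and you also make explicit (via Theorem~\ref{t3.4}) why $W_{\rat}(A)^{\sharp}=W_{\rat}(A)$, which the paper leaves implicit.
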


\begin{rem}
In particular, we have $W_{\rat} (K) = \uZ (K)^{\sharp}$ for all fields $K$. Recall Theorem \ref{t1.1}, (2) for $A_0 = K$ for an explicit description of $W_{\rat} (K)$ for an arbitrary field, a result from \cite{BV} \S\,2.
\end{rem}

\begin{proof}
The canonical map $\uZ (\Oh)^{\sharp} \to W_{\rat} (\Oh)^{\sharp}$ is surjective by Proposition \ref{t4.3}. To show that $\uZ (A)^{\sharp} \to W_{\rat} (A)^{\sharp}$ is an isomorphism for Dedekind rings $A$ we apply Proposition \ref{t4.5}. Let $\{ \spec A_i \xrightarrow{\pi_i} \spec A \}$ be an $fp$-covering of $\spec A$. We have to find a refinement which is an $fp$-covering of $\spec A$ by integral affine schemes. Since $\spec A$ is connected, $\{ \spec B \xrightarrow{\pi} \spec A \}$ will be an $fp$-covering for $B = A_i , \pi = \pi_i$ for some $i$. The map $A \to B$ is injective. We claim that the refinement $\{ \spec B^{\red} \to \spec A \}$ is an $fp$-covering as well. Since $\spec B^{\red} \to \spec B$ is finite surjective, we only need to check flatness. Over Dedekind rings, flatness is equivalent to being torsion-free. Let $a \ob = 0$ in $B^{\red}$ for elements $0 \neq a \in A$ and $\ob = b + \Nh \in B^{\red} = B / \Nh$ where $b \in B$ and $\Nh$ is the nilradical of $B$. Then $ab \in \Nh$ and hence $a^N b^N = 0$ for some $N \ge 1$. Since $A$ is reduced, $a^N \neq 0$ and hence $b^N = 0$ since $B$ is $A$-torsion free as a flat $A$-module. Thus $b \in \Nh$ and therefore $\ob = 0$. Hence $B^{\red}$ is $A$-torsion free, hence a flat $A$-module as remarked above. Thus we may assume that the ring $B$ in our $fp$-covering $\{ \spec B \to \spec A \}$ is reduced. Let $\eg$ be a prime ideal of $B$ that is mapped to the generic point $(0) \in \spec A$ and let $\spec C$ be an irreducible component with the reduced structure of $\spec B$ which contains $\eg$. Thus $C = B / \ep$ for some minimal prime ideal $\ep \subset \eg$. We have $\ep \cap A \subset \eg \cap A = (0)$ and hence $\ep \cap A = 0$. Therefore the generic point of the integral Noetherian scheme $\spec C$ is mapped to the generic point of $\spec A$. Since the composition $\spec C \to \spec B \to \spec A$ is finite, the image of $\spec C \to \spec A$ is closed and therefore all of $\spec A$. The map $A \to C$ is injective since $\ep \cap A = 0$, hence $C$ is $A$-torsion free and hence a flat $A$-module. Hence we have refined our original $fp$-covering by the $fp$-covering $\{ \spec C \to \spec A \}$ in $\NAS$ where $C$ is an integral domain. Now the proposition follows from Proposition \ref{t4.5}.
\end{proof}

For suitable non-subcanonical topologies, the map $\uZ (\Oh)^{\sharp} \to W_{\rat} (\Oh)^{\sharp}$ is injective and even an isomorphism as we will now see. However $W_{\rat} (A)^{\sharp}$ will differ from $W_{\rat} (A)$ in general.

The coverings of the finite (or $f$-)pretopology on a category of schemes are the jointly surjective families of finite morphisms.

\begin{cor}
\label{t4.7}
For any pretopology on $\NAS$ or $\AS$ which is finer than the $f$-pretopology, the map $\uZ (\Oh)^{\sharp} \silo W_{\rat} (\Oh)^{\sharp}$ is an isomorphism of ring sheaves.
\end{cor}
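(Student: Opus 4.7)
The plan is to establish surjectivity and injectivity separately, each by reducing to one of the two criteria already proved in this section (Propositions~\ref{t4.3} and~\ref{t4.5}), and in each case the reduction amounts to observing that $f$-coverings are available.

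For surjectivity, I would simply invoke Proposition~\ref{t4.3}. Its hypothesis is that the pretopology contains the syntomic finite locally free faithfully flat coverings used in \cite[Lemma 10.136.14]{stacks}. Such a morphism $\spec B\to\spec A$ is in particular finite and surjective, hence forms a one-element $f$-covering; since our pretopology is finer than the $f$-pretopology, it is a covering in our pretopology. Proposition~\ref{t4.3} then yields that $\uZ(\Oh)^{\sharp}\to W_{\rat}(\Oh)^{\sharp}$ is surjective.

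For injectivity, I would verify the hypothesis of Proposition~\ref{t4.5}: every covering $\{\spec A_i\to\spec A\}_{i\in I}$ in our pretopology admits a refinement by spectra of integral domains. Given such a covering, I would enumerate, for each $i$, the minimal primes $\ep_{i,1},\dots,\ep_{i,n_i}$ of $A_i$; in the Noetherian case $\Ch=\NAS$ there are only finitely many of them. The closed immersions $\spec(A_i/\ep_{i,j})\hookrightarrow\spec A_i$ are finite, and the family is jointly surjective because every prime of $A_i$ contains the nilradical and hence some $\ep_{i,j}$. Thus $\{\spec(A_i/\ep_{i,j})\to\spec A_i\}_{j=1}^{n_i}$ is an $f$-covering, and by the hypothesis on our pretopology it is a covering. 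Using the transitivity axiom for pretopologies (compositions of coverings are coverings), the composed family $\{\spec(A_i/\ep_{i,j})\to\spec A\}_{i,j}$ is a covering of $\spec A$ that refines the given one. Each $A_i/\ep_{i,j}$ is an integral domain, so Proposition~\ref{t4.5} applies and gives injectivity. Combining the two halves, $\uZ(\Oh)^{\sharp}\to W_{\rat}(\Oh)^{\sharp}$ is an isomorphism of sheaves of rings.

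The only potential obstacle I foresee concerns the non-Noetherian setting $\Ch=\AS$, where an $A_i$ may have infinitely many minimal primes. If the $f$-pretopology as used in the paper allows arbitrary jointly surjective families of finite morphisms, the argument above goes through verbatim. If instead one insists on finite families, one would simply observe that the statement of Corollary~\ref{t4.7} becomes most natural on $\NAS$, where the reduction to minimal primes is automatically finite; this is essentially the same technical point already encountered in the proof of Corollary~\ref{t4.6}.
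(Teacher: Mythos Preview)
Your argument is correct and matches the paper's proof essentially verbatim: surjectivity via Proposition~\ref{t4.3}, and injectivity via Proposition~\ref{t4.5} by refining a given covering through the reduced irreducible components (equivalently, quotients by minimal primes) of each $\spec A_i$, which form an $f$-covering by closed immersions. Your caution about $\AS$ is unnecessary here, since the paper defines the $f$-pretopology to allow arbitrary jointly surjective families of finite morphisms, so the possibly infinite family of minimal primes poses no problem.
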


For example, the corollary applies to the proper-, the finite-, the $qfh$- and the $h$-site, c.f. \cite[Definition 2.5]{GK}.

\begin{proof} Surjectivity follows from Proposition \ref{t4.3}. Injectivity is a consequence of Proposition \ref{t4.5}, as follows. Let $\{ X_i \to X \}$ be a covering. Consider the irreducible components $X_{ij}$ of $X_i$. If we equip $X_{ij}$ with the reduced scheme structure then $(X_{ij} \to X_i)_j$ is a jointly surjective family of closed immersions. By assumption it is therefore a covering of $X_i$ and hence $\{ X_{ij} \to X_i \to X \}_{i,j}$ is a covering in the given pretopology. Since the $X_{ij}$ are integral schemes, injectivity follows from Proposition \ref{t4.5}. 
\end{proof}

\begin{rem} For the categories of (affine) schemes considered in \cite[Lemma 3.2]{GK} the points for the $f$-topology are the schemes of the form $P = \spec B$ for an $aic$ domain $B$. Since the functors $A \mapsto \uZ (A)$ and $A \mapsto W_{\rat} (A)$ commute with filtered colimits, the stalks of $\uZ (\Oh)^{\sharp}$ and $W_{\rat} (\Oh)^{\sharp}$ in the point $P$ are given by
\[
\uZ (\Oh)^{\sharp}_P = \uZ B = W_{\rat} (B) = W_{\rat} (\Oh)^{\sharp}_P \; .
\]
Hence the natural map $\uZ (\Oh)^{\sharp} \to W_{\rat} (\Oh)^{\sharp}$ is an isomorphism on the stalks for the $f$-topology, hence an isomorphism of $f$-sheaves and therefore an isomorphism for any finer (pre-)topology.
\end{rem}

Let $W_J (\Oh)$ and $W (\Oh)$ be the pre-sheaves of rings $Y \mapsto W_J (\Oh (Y))$ c.f. Theorem \ref{t2.8} and $Y \mapsto W (\Oh (Y))$. 

On the category $\NAS$ the coverings for the $f$-pretopology can always be refined by finite coverings since a Noetherian scheme has only finitely many irreducible components. 

\begin{prop}
\label{t4.8}
1) For any pretopology on $\NAS$ for which all finite families of jointly surjective closed immersions are coverings, the map $W_{\rat} (\Oh)^{\sharp} \silo W_J (\Oh)^{\sharp}$ is an isomorphism of ring-sheaves.\\
2) For any pretopology on $\NAS$ which is finer than the $f$-pretopology we have canonical isomorphisms of subring-sheaves of $W (\Oh)^{\sharp}$
\[
\uZ (\Oh)^{\sharp} \silo W_{\rat} (\Oh)^{\sharp} \silo W_J (\Oh)^{\sharp} \; .
\]
\end{prop}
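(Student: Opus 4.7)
\textbf{Plan for Proposition \ref{t4.8}.} The strategy is to reduce Part 1 to the identity $W_J(B) = W_{\rat}(B)$ for integral Noetherian domains $B$ (Proposition \ref{t2.1} b)), exploiting the fact that every Noetherian scheme is covered set-theoretically by its finitely many reduced irreducible components; Part 2 then combines Part 1 with Corollary \ref{t4.7}. Since the presheaf inclusion $W_{\rat}(\Oh) \hookrightarrow W_J(\Oh)$ is clear from Proposition \ref{t2.1} a), and sheafification is exact (preserves finite limits, hence monomorphisms), the induced map $W_{\rat}(\Oh)^{\sharp} \hookrightarrow W_J(\Oh)^{\sharp}$ is automatically injective, so in Part 1 only surjectivity requires work.

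For surjectivity in Part 1, fix $X = \spec A$ in $\NAS$ and a section $s \in W_J(\Oh)^{\sharp}(X)$. By the construction of the sheafification $F \mapsto F^{\sharp} = F^{++}$, there is a covering $\{Y_\alpha = \spec B_\alpha \to X\}$ in the given pretopology and elements $\tilde s_\alpha \in W_J(B_\alpha)$ representing $s|_{Y_\alpha}$. Because $B_\alpha$ is Noetherian it possesses only finitely many minimal primes $\ep_{\alpha,1},\ldots,\ep_{\alpha,r_\alpha}$, and the natural maps
\[
\bigl\{\spec (B_\alpha/\ep_{\alpha,i}) \hookrightarrow Y_\alpha\bigr\}_{1 \le i \le r_\alpha}
\]
form a finite jointly surjective family of closed immersions, hence a covering by hypothesis. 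Each quotient $B_\alpha/\ep_{\alpha,i}$ is an integral Noetherian domain, therefore a Fatou domain, so Proposition \ref{t2.1} b) gives $W_J(B_\alpha/\ep_{\alpha,i}) = W_{\rat}(B_\alpha/\ep_{\alpha,i})$. Consequently $\tilde s_\alpha|_{\spec (B_\alpha/\ep_{\alpha,i})}$ lies in the image of $W_{\rat}$, and after refining by the composed covering $\{\spec(B_\alpha/\ep_{\alpha,i}) \to Y_\alpha \to X\}$ the section $s$ is locally represented by elements of $W_{\rat}(\Oh)$. This exhibits $s$ as a section of the image subsheaf $W_{\rat}(\Oh)^{\sharp}$, completing Part 1.

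For Part 2, observe first that any pretopology finer than the $f$-pretopology satisfies the hypothesis of Part 1: closed immersions are finite morphisms, so a finite jointly surjective family of closed immersions is an $f$-covering and therefore a covering in any finer topology. Thus Part 1 delivers the second isomorphism $W_{\rat}(\Oh)^{\sharp} \silo W_J(\Oh)^{\sharp}$, while the first isomorphism $\uZ(\Oh)^{\sharp} \silo W_{\rat}(\Oh)^{\sharp}$ is precisely Corollary \ref{t4.7}. The chain of subring inclusions into $W(\Oh)^{\sharp}$ is inherited, by exactness of sheafification, from the presheaf chain $\uZ(\Oh) \subset W_{\rat}(\Oh) \subset W_J(\Oh) \subset W(\Oh)$ (using Proposition \ref{t4.1} b), Proposition \ref{t2.1} a), and the defining inclusion of Theorem \ref{t2.8} respectively).

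The only real obstacle is organizational rather than mathematical: one must check that the reduction to irreducible components — the natural way to invoke the Fatou equality $W_J = W_{\rat}$ — is legitimately available inside the given topology. This is exactly what the hypothesis on closed-immersion coverings (respectively, the containment of the $f$-pretopology in Part 2) is designed to supply, and the Noetherian hypothesis on $\NAS$ is what keeps the family of irreducible components finite so that the refinement is admissible in the $f$-pretopology.
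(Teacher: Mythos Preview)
Your argument is correct and follows the same route as the paper's: injectivity from exactness of sheafification applied to the presheaf inclusion $W_{\rat}(\Oh)\hookrightarrow W_J(\Oh)$, surjectivity by refining any covering to the reduced irreducible components (finitely many, by Noetherianity) where Proposition \ref{t2.1} b) gives $W_J=W_{\rat}$, and Part 2 by combining Part 1 with Corollary \ref{t4.7}. One small slip in your closing sentence: the presheaf map $\uZ(\Oh)\to W_{\rat}(\Oh)$ is \emph{not} an inclusion in general---Proposition \ref{t4.1} b) says $\omega$ is injective only for domains---so there is no ``presheaf chain $\uZ(\Oh)\subset W_{\rat}(\Oh)$''; the fact that $\uZ(\Oh)^{\sharp}$ sits as a subring-sheaf of $W(\Oh)^{\sharp}$ follows instead from the isomorphism of Corollary \ref{t4.7} composed with $W_{\rat}(\Oh)^{\sharp}\hookrightarrow W(\Oh)^{\sharp}$.
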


\begin{proof}
1) By Proposition \ref{t2.1} a), the canonical map of presheaves $W_{\rat} (\Oh) \to W_J (\Oh)$ is injective. Hence $W_{\rat} (\Oh)^{\sharp} \to W_J (\Oh)^{\sharp}$ is injective as well. For $X = \spec A$ and $s \in W_J (\Oh)^{\sharp} (X)$ choose a covering $\{ X_i \to X \}$ such that each restriction $s \, |_{X_i} \in W_J (\Oh)^{\sharp} (X_i)$ is the image of an element $s_i \in W_J (A_i)$ where $X_i = \spec A_i$. For each $i$, the family $\{ X_{ij} \to X_i \}_j$ as in the proof of Corollary \ref{t4.7} is finite and by assumption therefore a covering. Hence we obtain a covering $\{ X_{ij} \to X_i \to X \}_{i,j}$ such that each $X_{ij} = \spec A_{ij}$ is an integral scheme. The restrictions $s \, |_{X_{ij}} \in W_J (\Oh)^{\sharp} (X_{ij})$ are represented by the images $s_{ij}$ of $s_i$ under the map $W_J (A_i) \to W_J (A_{ij})$ induced by $A_i \to A_{ij}$. As a Noetherian integral domain, $A_{ij}$ is Fatou and hence we have $W_{\rat} (A_{ij}) = W_J (A_{ij})$ by Proposition \ref{t2.1}, b). Hence every $s \, |_{X_{ij}}$ is the image of a section of $W_{\rat} (\Oh) (X_{ij})$ and hence of $W_{\rat} (\Oh)^{\sharp} (X_{ij})$. Thus the map $W_{\rat} (\Oh)^{\sharp} \to W_J (\Oh)^{\sharp}$ is surjective.\\
2) follows from 1) and Corollary \ref{t4.7}. 
\end{proof}

We now discuss Frobenius and Verschiebung maps on the sheafified reduced monoid algebra $\uZ (\Oh)^{\sharp}$ and compare with those on the rational Witt vector sheaves.

Frobenius is easy, for a scheme $X$, there is a map of multiplicative monoids
\[
F_N : \Oh (X) \longrightarrow \uZ \Oh (X) \; , \; F_N (f) = (f^N) \; .
\]
It induces a homomorphism of presheaves of rings $F_N : \uZ (\Oh) \to \uZ (\Oh)$ on any category of schemes and hence a ring map of the associated sheaves $F_N : \uZ (\Oh)^{\sharp} \to \uZ (\Oh)^{\sharp}$ for any (pre-)topology on the given category of schemes. Under the canonical maps $\uZ \Oh (X) \to W_{\rat} (\Oh (X))$ and $\uZ (\Oh)^{\sharp} \to W_{\rat} (\Oh)^{\sharp}$ the endomorphisms $F_N$ are compatible with the corresponding Frobenius endomorphisms on rational Witt vector (pre-)sheaves. 

Verschiebung is less obvious. For example, for all the sites in Proposition \ref{t4.3}, the map of sheaves $\uZ (\Oh)^{\sharp} \twoheadrightarrow W_{\rat} (\Oh)^{\sharp}$ is surjective. It is not clear to me if e.g. for the $fp$- or $fppf$-topology, the (sheafified) Verschiebung $V_N$ on $W_{\rat} (\Oh)^{\sharp}$ can be lifted to an endomorphism of $\uZ (\Oh)^{\sharp}$. If the (pre-)topology on $\NAS$ or $\AS$ is finer than the $f$-pretopology, then by Corollary \ref{t4.7} we have an isomorphism $\omega : \uZ (\Oh)^{\sharp} \silo W_{\rat} (\Oh)^{\sharp}$ and hence there are additive Verschiebung endomorphisms $V_N$ on $\uZ (\Oh)^{\sharp}$. They have the following explicit description: It suffices to give the map of presheaves of sets $V_N : \Oh \to \uZ (\Oh)^{\sharp}$ because this induces a map of presheaves of abelian sheaves $\uZ (\Oh) \to \uZ (\Oh)^{\sharp}$ and hence a homomorphism of abelian sheaves $V_N : \uZ (\Oh)^{\sharp} \to \uZ (\Oh)^{\sharp}$. Thus let $X = \spec A$ be an affine scheme and let $a \in \Oh (X) = A$ be an element. For the polynomial $P (T) = T^n-a$ choose a finite surjective morphism $\spec B \to \spec A$ such that we have
\[
T^n - a = \prod^n_{i=1} (T - \alpha_i) \quad \text{in} \; B [T] \; .
\]
For example, we can take $B = A [X,Y] / (X^n - 1 , Y^n- a)$ or use \cite[Lemma 10.136.14]{stacks}. Then $V_N (a) \in \uZ (A)^{\sharp}$ is the image of $\sum_i (\alpha_i) \in \uZ B$ under the canonical map $\uZ B \to \uZ (B)^{\sharp}$. A priori, $V_N (a) \in \uZ (B)^{\sharp}$ but comparison with $V_N$ on $W_{\rat}$ shows that $V_N (a) \in \uZ (A)^{\sharp}$. Namely, we have
\[
V_N (1 - aT) = 1 - aT^n = \prod^n_{i=1} (1 - \alpha_i T) \quad \text{in} \; W_{\rat} (B) \; .
\]
Under the isomorphism $\omega : \uZ (B)^{\sharp} \silo W_{\rat} (B)^{\sharp}$ we therefore have $\omega (V_N (a)) = V_N (\omega (a))$. Since $\omega (a) \in W_{\rat} (A)^{\sharp}$ and since $V_N$ maps $W_{\rat} (A)^{\sharp}$ into itself, it follows that $V_N (a) \in \uZ (A)^{\sharp} \subset \uZ (B)^{\sharp}$. It is also possible to show directly i.e. without using $V_N$ on $W_{\rat}$ that
\[
V_N (a) \in \mathrm{Equ} (\uZ (B)^{\sharp} \rightrightarrows \uZ (B \otimes_A B)^{\sharp}) = \uZ (A)^{\sharp}
\]
and that $V_N (a)$ is independent of the choice of the finite surjective morphisms $\spec B \to \spec A$. 

We end this section with some remarks about connections to Suslin's and Voevodsky's work. For a scheme $X$, Voevodsky has studied the $h$- and $qfh$-sheaves of abelian groups associated to the presheaf of abelian groups $Y \mapsto \Z X (Y)$, c.f. \cite[\S\,2]{V}, in this regard note Proposition \ref{t4.2}. For the scheme $X = \A^1$ we have $\Z \A^1 (Y) = \Z \Oh (Y)$ and Voevodsky's sheaves $\Z_h (\A^1)$ resp. $\Z_{qfh} (\A^1)$ become $\Z (\Oh)^{\sharp}$ for $\sharp = h$ resp. $\sharp = qfh$ in our notation, considered on the category of separated schemes of finite type over a Noetherian excellent scheme $S$. Since $\A^1$ is affine, \cite[Theorem 3.3.5]{V} implies that $\Z_h (\A^1) = \Z_{qfh} (\A^1)$ i.e. that $\Z (\Oh)^{\sharp}$ is the same sheaf for $\sharp = h$ and $\sharp = qfh$. It follows that on $\NAS$ the associated sheaf $W_{\rat} (\Oh)^{\sharp}$ is the same for $\sharp = h$ and $\sharp = qfh$ since we have $\uZ (\Oh)^{\sharp} = W_{\rat} (\Oh)^{\sharp}$ for those topologies by Corollary \ref{t4.7}. 

Let $Sch / k$ resp. $Nor / k$ be the category of separated (normal) schemes of finite type over a field $k$ of characteristic exponent $p \ge 1$ with the $qfh$-topology. We claim that the presheaf $Y \mapsto W_{\rat} (\Oh (Y)) \otimes \Z [1/p]$ is a $qfh$-sheaf on the category $Nor / k$ in the sense of \cite[Definition 6.1]{SV1}. Namely, by Theorem \ref{t3.3} it is a Zariski sheaf on each $S$ in $Nor / k$ which is condition (6.1.1) in \cite{SV1}. Moreover it satisfies Galois descent in the sense of \cite[(6.1.2)]{SV1} because of Theorem \ref{t1.1}. It follows from \cite[\S\,6]{SV1}, that for any $S = \spec A$ in $Nor / k$ the sections over $S$ of the $qfh$-sheaf attached to $\uZ (\Oh) \otimes \Z [1 / p]$ are given by the formula
\begin{equation}
\label{eq:21}
(\uZ (\Oh) \otimes \Z [ 1/p ])^{\sharp} (S) \overset{\text{Cor. \ref{t4.7}}}{=} (W_{\rat} (\Oh) \otimes \Z [ 1/p ] )^{\sharp} (S) = W_{\rat} (A) \otimes \Z [ 1/p ] \; .
\end{equation}
On the other hand, by \cite[Theorem 6.7]{SV1}, we have
\[
(\uZ (\Oh) \otimes \Z [1/p])^{\sharp} (S) = z^c_0 (\A^1) (S) \; .
\]
Here $z^c_0 (X) (S)$ for $X$ in $Sch / k$ is the free $\Z [1/p]$-module generated by closed integral subschemes $Z \subset X \times S$ for which the projection $Z \to S$ is a finite surjective morphism.

It follows that $W_{\rat} (A) \otimes \Z [1/p] = z^c_0 (\A^1) (\spec A)$ for normal Noetherian domains over a field. This formula motivated the next section and in particular Theorem \ref{t5.1} where a direct proof of a more general result is given. 

We also point out that by \cite[Lemma 7.2]{SV1} we have as $qfh$-sheaves on $Sch/ k$ 
\[
\uExt^i (\uZ (\Oh)^{\sharp} , \Z / n) = \uExt^i (W_{\rat} (\Oh)^{\sharp} , \Z / n) = 0 \quad \text{for} \; p \nmid n \; \text{and} \; i \ge 0 \; .
\]
\section{Rational Witt vectors and finite correspondences} \label{sec:5}

For a morphism $X \to S$ of separated Noetherian schemes, Suslin and Voevodsky define the subgroup $c_{\equi} (X / S , r)$ of universally integral cycles on $X$ relative to $S$ which are proper and equidimensional of dimension $r$ over $S$ and satisfy the equivalent conditions of \cite[Lemma 3.3.9]{SV2}. The general definition is involved but if $S$ is a normal Noetherian scheme and $X / S$ is smooth of finite type of equidimension $d$ over $S$, then by \cite[Proposition 3.4.8]{SV2}, we have
\begin{equation}
\label{eq:22}
c_{\equi} (X / S , d-1) = \PropCycl_{\equi} (X / S , d-1) = \PropCaDir (X / S) \; .
\end{equation}
The general definition of the group in the middle is given in \cite[Definition 3.1.3]{SV2}. For $X / S$ of finite type and $S$ geometrically unibranch e.g. $S$ normal its description simplifies: The abelian group $\PropCycl_{\equi} (X / S , r)$ is freely generated by cyles of integral closed subschemes $Z$ in $X$ which are proper and equidimensional of dimension $r$ over $S$, c.f. \cite[Corollary 3.4.3]{SV2}. Finally, $\PropCaDir (X/S)$ is the group of relative Cartier divisors on $X$ which are proper over $S$.

For general separated Noetherian $S$-schemes $X , Y$ one sets
\[
\Corr_S (X,Y) = c_{\equi} (X \times_S Y / X ,0) \; , \; \text{c.f. \cite[Definition 1A.9]{MVW}} \; .
\]
Using the good funtoriality properties of universally integral cycles, one may compose these correspondences, \cite[Defintion 1A.11]{MVW}. The graph of any morphism $f : X \to Y$ over $S$ gives an element $\Gamma_f$ of $\Corr_S (X,Y)$ and one has $\Gamma_{g \verk f} = \Gamma_f \verk \Gamma_g$ if $g : Y \to Z$ is another morphism. For $S = \spec \Z$ we write $\Corr (X,Y) = \Corr_S (X,Y)$. For a Noetherian separated scheme $X$ and $\A = \A^1_{\spec \Z}$ consider the map of sets
\begin{equation}
\label{eq:23}
\Oh (X) = \Mor (X , \A) \longrightarrow \Corr (X , \A) \quad \text{where} \; f \mapsto \Gamma_f \; .
\end{equation}
The map \eqref{eq:23} induces an additive homomorphism 
\begin{equation}
\label{eq:24}
\Z \Oh (X) \longrightarrow \Corr (X, \A) \; .
\end{equation}
The free abelian group $\Z \Oh (X)$ on the set $\Oh (X)$ carries two ring structures since we may view it as the monoid algebra of either $(\Oh(X) , \cdot)$ or $(\Oh (X) , +)$. The corresponding multiplication maps on $\Corr (X, \A)$ are given by composition 
\[
\Corr (X, \A) \times \Corr (X, \A) \xrightarrow{c} \Corr (X , \A \times \A) \xrightarrow{m_* , a_*} \Corr (X , \A) \; .
\]
Here $c ( \alpha , \beta) = p^*_1 (\alpha) \cdot p^*_2 (\beta)$ with the projections $p_1$ and $p_2$ in 
\[
X \times \A \times \A = (X \times \A) \times_X (X \times \A) \overset{p_1}{\underset{p_2}\rightrightarrows} X \times \A \; .
\]
The maps $m_*$ and $a_*$ are induced by the multiplication and addition maps of the ring scheme $\A$ (or $\A_X$). Let $O \in \Corr (X , \A)$ be the image of $0 \in \Oh (X)$ under the map \eqref{eq:23}. Then $\Z \cdot O$ is the ideal generated by $O$ in the ring $(\Corr (X , \A) , + , m_* \verk c)$. Let $\uCorr (X, \A)$ be the quotient ring by this ideal. From the map \eqref{eq:25} we obtain a functorial ring homomorphism from the reduced monoid ring of $(\Oh (X) , \cdot)$
\begin{equation}
\label{eq:25}
\uZ \Oh (X) \longrightarrow \uCorr (X , \A) \; .
\end{equation}

\begin{theorem}
\label{t5.1}
On the category of normal, Noetherian, affine schemes $X$ there is a unique functorial factorization of the map \eqref{eq:25} over a functorial ring isomorphism
\begin{equation}
\label{eq:26}
W_{\rat} (\Oh (X)) \silo \uCorr (X , \A) \; .
\end{equation}
\end{theorem}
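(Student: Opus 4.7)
The plan is to construct an explicit inverse $\Phi : \uCorr(X,\A) \silo W_{\rat}(\Oh(X))$ on generators, check the ring axioms, and then verify the triangle with \eqref{eq:25} and uniqueness. Since $W_{\rat}$ and $\uCorr(-,\A)$ both turn disjoint unions into products, I reduce to $X = \spec A$ with $A$ a normal Noetherian integral domain of fraction field $K$. Because $\A^1_X / X$ is smooth of relative dimension one and $X$ is normal Noetherian, \eqref{eq:22} combined with \cite[Corollary 3.4.3]{SV2} identifies $\Corr(X,\A) = \PropCycl_{\equi}(\A^1_X/X,0)$ with the free abelian group on integral closed subschemes $Z \subset \spec A[T]$ that are finite surjective over $\spec A$. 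For such a $Z = V(\ep)$, the generic fiber $(A[T]/\ep)\otimes_A K$ equals $K[T]/(f_Z)$ for a unique monic irreducible $f_Z \in K[T]$; its roots lie in $\bar K$ integrally over $A$, so by normality of $A$ the elementary symmetric functions in these roots lie in $A$, giving $f_Z \in A[T]$, and a short argument using $A$-torsion-freeness of $A[T]/(f_Z)$ then gives $\ep = (f_Z)$. Thus $\Corr(X,\A)$ is the Grothendieck group of the multiplicative monoid of monic polynomials in $A[T]$.

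Define $\Phi$ on these generators by $[V(P)] \mapsto P^\vee(T) := T^{\deg P}P(1/T) \in W_{\rat}(A)$. The reciprocal map $P \mapsto P^\vee$ takes monic polynomials to polynomials with constant term one, is multiplicative, and has kernel on monic polynomials equal to the submonoid generated by $T$. Since $\Phi(\Gamma_a) = (T-a)^\vee = 1-aT = [a]$ and $\Phi(O) = \Phi(\Gamma_0) = 1$ equals the additive identity of $W_{\rat}(A)$, the map $\Phi$ descends to an abelian-group isomorphism $\uCorr(X,\A) \silo W_{\rat}(A)$, under which cycle addition on the left becomes multiplication of monic polynomials, matching $W$-addition on the right.

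The main obstacle is multiplicativity. On graphs one computes $m_*c(\Gamma_a,\Gamma_b) = \Gamma_{ab}$ directly, matching $[a]\cdot_W[b]=[ab]$. For arbitrary monic $P,Q \in A[T]$, choose a finitely generated $\Z$-subalgebra $A_0 \subset A$ containing the coefficients of $P,Q$; its normalization $\tilde A_0$ is excellent and still contained in $A$, and the integral closure $B$ of $\tilde A_0$ in a splitting field of $PQ$ over $\Frac \tilde A_0$ is a finite normal Noetherian $\tilde A_0$-algebra over which $P$ and $Q$ factor as $\prod_i(T-\alpha_i)$ and $\prod_j(T-\beta_j)$. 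Bilinearity of $m_*\circ c$ on cycles reduces $[V(P)]\cdot[V(Q)]$ in $\uCorr(\spec B,\A)$ to $\sum_{i,j}\Gamma_{\alpha_i\beta_j}$, which corresponds to the monic polynomial $\prod_{i,j}(T-\alpha_i\beta_j)$ with reciprocal $\prod_{i,j}(1-\alpha_i\beta_j T) = P^\vee \cdot_W Q^\vee$ by the universal formula for Witt multiplication. Functoriality of $\Phi$ together with injectivity of $W_{\rat}(\tilde A_0)\hookrightarrow W_{\rat}(B)$ (as $W_{\rat}$ respects monomorphisms) descends this identity from $B$ back to $\tilde A_0$, and functoriality in $\tilde A_0 \hookrightarrow A$ lifts it to $A$, proving $\Phi$ is a ring isomorphism.

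Finally, $\Phi^{-1}\circ\omega$ sends $(a)$ to $[a]$ to $\Gamma_a$, recovering \eqref{eq:25} on generators, so the factorization exists. For uniqueness, any other natural factorization $\Psi$ agrees with $\Phi^{-1}$ on the image of $\omega$; given $f \in W_{\rat}(A)$, the same splitting-field construction yields a finite normal extension $B/A$ in which the image of $f$ in $W_{\rat}(B)$ lies in $\omega_B(\uZ B)$, so $\Psi$ and $\Phi^{-1}$ coincide on the image of $f$ in $W_{\rat}(B)$, and the injection $W_{\rat}(A)\hookrightarrow W_{\rat}(B)$ forces $\Psi(f) = \Phi^{-1}(f)$.
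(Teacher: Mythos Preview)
Your argument is correct and follows the same strategy as the paper: identify $\Corr(X,\A)$ with monic polynomials via \eqref{eq:22}, set up the bijection through the reciprocal operation $P\mapsto P^\vee$, and then verify multiplicativity and uniqueness by passing to an extension over which the polynomials split into linear factors and reducing to graphs. The only difference is that the paper produces the splitting extension by invoking \cite[Lemma 10.136.14]{stacks} (a syntomic finite free faithfully flat $A$-algebra), whereas you use the integral closure in a splitting field after first descending to an excellent subring $\tilde A_0$; one small imprecision is that in your uniqueness paragraph the ``finite normal extension $B/A$'' should again be taken over $\tilde A_0$ and then transported to $A$ via naturality along $\tilde A_0\hookrightarrow A$, since for a general normal Noetherian $A$ the integral closure in a finite extension of $K$ need not be Noetherian.
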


\begin{proof}
By \eqref{eq:22} we have identifications
\[
\Corr (X, \A) := c_{\equi} (\A^1_X / X , 0) = \PropCycl_{\equi} (\A^1_X / X) = \PropCaDir (\A^1_X / X) \; .
\]
Thus we may identify $\Corr (X , \A)$ with the free abelian group generated by integral closed subschemes $Z$ of $\A^1_X = \spec A [T]$ which are finite and flat over $X = \spec A$. Via $Z = \spec A [T] / \ep$ these subschemes correspond to the prime ideals $\ep$ of $A [T]$ for which $B = A [T] / \ep$ is a finite faithfully flat $A$-algebra. Since $b$ is integral over $A$, the minimal polynomial $P (T) \in K [T]$ over $K$ of $b = T \mod \ep \in B$ has coefficients in the integral closure of $A$ in $K$ and hence in $A$ since $A$ is normal. Thus we have $B = A [T] / (P (T))$ for a monic polynomial $P (T) \in A [T]$ which is irreducible in $K [T]$. Since a normal Noetherian domain is completely integrally closed by Proposition \ref{t1.6} it follows from Theorem \ref{t1.5}, c) that $W_{\rat} (A) = W_{\rat} (K) \cap W (A)$ is the free abelian group generated by the polynomials $0 \neq f (T) \in A [T]$ with $f (0) = 1$ which are irreducible in $K [T]$. Sending such a polynomial $f$ to $Z = \spec (A (T) / (P (T)))$ where $P (T) = f^* (T) := T^{\deg f} f (T^{-1})$ we obtain an injective homomorphism of additive groups from $W_{\rat} (A) = \Z [f's]$ to $\Corr (X , \A)$ which becomes an isomorphism after projecting to $\uCorr (X , \A)$. Intuitively, $f$ is sent to the multivalued map on $X$ whose values are the inverses of the zeroes of $f$. It follows from the definition that the map \eqref{eq:25} factors over the map \eqref{eq:26}. After checking functoriality, the uniqueness assertion and the compatibilites with the multiplicative structures follow by using \cite[Lemma 10.136.14]{stacks}.
\end{proof}

Consider the finite, flat $N$-th power map $\pi_N = (\;)^N : \A \to \A$ of $\A$ viewed as a ring scheme. It induces endomorphisms $\pi_{N*}$ and $\pi^*_N$ of $\Corr (X, \A)$ and of $\uCorr (X, \A)$ as rings, resp. abelian groups.

\begin{prop}
\label{t5.2}
Under the isomorphism $W_{\rat} (\Oh (X)) \silo \uCorr (X, A)$ of Theorem \ref{t5.1}, Frobenius $F_N$ and Verschiebung $V_N$ on $W_{\rat} (\Oh (X))$ correspond to $\pi_{N*}$ and $\pi^*_N$ on $\uCorr (X , \A)$.
\end{prop}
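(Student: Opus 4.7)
The plan is to verify the correspondence on additive generators. By the proof of Theorem \ref{t5.1}, with $X = \spec A$ and $K = \Quot(A)$, the abelian group $W_{\rat}(A)$ is freely generated by the polynomials $f \in A[T]$ with $f(0) = 1$ that are irreducible in $K[T]$, each corresponding under \eqref{eq:26} to the class of $Z_f = \spec A[T]/(f^*(T))$. Since $F_N$, $V_N$, $\pi_{N*}$ and $\pi_N^*$ are all additive, it suffices to compare the two sides on such a generator $f$.

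For Verschiebung, formula \eqref{eq:17} gives $V_N(f)(T) = f(T^N)$, hence $(V_N f)^*(T) = f^*(T^N)$. On the other side, the flat pullback of the Cartier divisor $V(f^*(T))$ along $\pi_N \colon T \mapsto T^N$ is $V(f^*(T^N))$ by the defining property of pullback of Cartier divisors. Expressing both as cycles via the factorization of $f^*(T^N)$ into monic irreducible factors in $A[T]$ (which lie in $A[T]$ by normality of $A$) then identifies $V_N(f)$ with $\pi_N^*[Z_f]$.

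For Frobenius, fix an algebraic closure $\oK$ of $K$ and write $f^*(T) = \prod_{i=1}^d(T - \alpha_i)$ in $\oK[T]$ with multiplicities. From \eqref{eq:17} one obtains $(F_N f)^*(T) = \prod_i (T - \alpha_i^N)$. Galois transitivity on the roots of $f^*$ implies that every distinct value $\alpha_i^N$ occurs with the same multiplicity $e$ in this product, so $(F_N f)^*(T) = g(T)^e$ where $g \in A[T]$ is the monic minimal polynomial of $\alpha_1^N$ over $K$ (with coefficients in $A$ by normality). Hence $F_N(f)$ corresponds via \eqref{eq:26} to the cycle $e \cdot [\spec A[y]/(g(y))]$. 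On the geometric side, the scheme-theoretic image $\pi_N(Z_f)$ is $\spec A[y]/I$ where $I$ is the kernel of $A[y] \to A[T]/(f^*(T))$, $y \mapsto T^N$. Over $K$ this kernel is $(g)$, and a rank argument (both $A[y]/(g)$ and $A[y]/I$ are torsion-free $A$-modules of rank $d/\deg g$) upgrades this to $I = (g)$ in $A[y]$. The induced finite morphism $Z_f \to \pi_N(Z_f)$ has generic degree $d/\deg g = e$, giving $\pi_{N*}[Z_f] = e \cdot [\spec A[y]/(g(y))]$, in agreement with $F_N(f)$.

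The main obstacle is the Frobenius step, where one must match the algebraic multiplicity $e$ arising in the factorization $(F_N f)^* = g^e$ with the residue-degree factor in the cycle-theoretic pushforward. The argument rests on Galois-theoretic counting of conjugates and, crucially, on the normality of $A$ to descend minimal polynomials and defining ideals from $K$ back to $A$; in positive characteristic, inseparability of $f^*$ requires slightly more care in the multiplicity counting but does not affect the overall strategy.
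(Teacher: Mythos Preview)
Your argument is correct, but it proceeds quite differently from the paper's. The paper does not work with the irreducible-polynomial generators of $W_{\rat}(A)$ at all. Instead it checks the two identities only on Teichm\"uller elements $[a]=1-aT$ for $a\in A$, under the additional hypothesis that $T^N-a$ already splits into linear factors in $A[T]$; in that case $F_N([a])=[a^N]\leftrightarrow \pi_{N*}(\Gamma_a)$ and $V_N([a])=\prod_i(1-a_iT)\leftrightarrow \pi_N^*(\Gamma_a)$ are immediate from the definitions. The general case is then obtained by the same syntomic base-change trick (\cite[Lemma 10.136.14]{stacks}) already used in the proof of Theorem~\ref{t5.1}: after a faithfully flat syntomic extension every element of $W_{\rat}$ becomes a $\Z$-combination of such split Teichm\"uller elements, and functoriality together with injectivity of $W_{\rat}(A)\hookrightarrow W_{\rat}(B)$ and of flat cycle pullback lets one descend. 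Your approach trades this reduction step for a direct computation over $A$: it needs no auxiliary base change, makes the cycle-theoretic meaning of $F_N$ and $V_N$ explicit, and pinpoints exactly where normality is used (to force the minimal polynomial $g$ and the image ideal $I$ to live in $A[y]$). The cost is the Galois/multiplicity bookkeeping for $F_N$, including the inseparable case you flag; the paper's route sidesteps all of that at the price of invoking functoriality of $c_{\equi}$ under flat base change.
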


\begin{proof}
For $f \in A = \Oh (X)$ with $T^N- f$ a product of linear factors in $A$ one checks $F_N (f) \ent \pi_{N*} (\Gamma_f)$ and $V_N (f) \ent \pi^*_N (\Gamma_f)$ by applying the definitions. The general case is reduced to this one by using \cite[Lemma 10.136.14]{stacks}.
\end{proof}

\begin{rems}
1) It seems to be an interesting question whether the isomorphism \eqref{eq:26} can be extended to non-normal schemes $X$.\\
2) Let $\End_A$ denote the exact category of finite rank, projective $A$-modules $M$ equipped with an endomorphism $\varphi$. The $K$-group $K_0 (\End_A)$ whose addition is induced by the direct sum becomes a commutative unital ring with multiplication induced by tensor product. Sending $(M , \varphi)$ to $\det (1 - \varphi T \,|\, M)$ induces a ring homomorphism
\begin{equation}
\label{eq:27}
K_0 (\End_A) \longrightarrow W_{\rat} (A) \ .
\end{equation}
Via the map induced by $M \mapsto (M , 0)$, the ring $K_0 (\End_A)$ contains $K_0 (A)$ as an ideal and via the idempotent $e = [(A,0)]$ as a direct factor. Restricted to $\tK_0 (\End_A) = K_0 (\End_A) / K_0 (A) \equiv (1 - e) K_0 (\End_A)$ the map \eqref{eq:27} becomes an isomorphism as Almkvist showed in \cite{A}
\begin{equation}
\label{eq:28}
\tK_0 (\End_A) \silo W_{\rat} (A) \; .
\end{equation}
In the light of Theorem \ref{t5.1} the map \eqref{eq:27} can be reformulated. For an affine morphism $\pi : Y \to X$ of Noetherian schemes, let $\Ch_{Y , X}$ be the category of quasicoherent sheaves $\Mh$ on $Y$ for which $\pi_* \Mh$ is a vector bundle. The category $\Ch_{Y /X}$ is exact since $\pi_*$ is exact. Reducing to the affine case, one shows that for $\Mh$ in $\Ch_{Y / X}$ the scheme theoretic support $Z = \supp \Mh$ is finite over $X$. Let $\{ Z_i \}$ be the irreducible components of $Z$ and $\eta_i \in Z_i$ their generic points. We set
\[
\cycle (\Mh) = \sum_i (\length_{\Oh_{Y , \eta_i}} \Mh_{\eta_i}) [Z_i] \in \PropCycl (Y / X , 0) \; .
\]
Now consider the special case where $X = \spec A$ is as in Theorem \ref{t5.1} and $Y = \A^1_X$. Then an object $\Mh$ of $\Ch_{Y / X}$ corresponds to an $A [T]$-module $M$ which is projective of finite rank as an $A$-module. Thus the categories $\End_A$ and $\Ch_{Y / X}$ are equivalent and hence
\[
K_0 (\Ch_{Y / X}) = K_0 (\End_A) \; .
\]
By straightforward arguments, one checks that the following diagram commutes
\begin{equation}
\label{eq:29}
\vcenter{\xymatrix{
K_0 (\Ch_{Y / X}) \ar[r]^-{\cycle} \ar@{=}[d] & \PropCycl_{\equi} (Y / X , 0) \ar@{=}[r] & \Corr (X, \A) \ar[d] \\
K_0 (\End_A) \ar[r] & W_{\rat} (A) \ar[r]^-{\overset{\eqref{eq:26}}{\sim}} & \uCorr (X, \A) \; .
}}
\end{equation}
We hope to return to the question whether Almkvist's theorem can be generalized to a comparison result between $K_0 (\Ch_{Y, X})$ and $c_{\equi} (Y/X , 0)$. Considering suitable categories of perfect complexes one would also want to get rid of the condition that $\pi$ is affine. Moreover the comparison should be extended to higher $K$-groups and higher cycle groups.
\end{rems}

%\bibliographystyle{alpha}
%\bibliography{literat}
%\newpage
%\input{abstract}
%\input{address}
\end{document}